\documentclass[reqno, 10pt, oneside]{amsart}

\usepackage{latexsym, amsmath,amssymb}
\usepackage{graphicx}
\usepackage[hidelinks]{hyperref}

\usepackage[initials, alphabetic,backrefs,msc-links]{amsrefs}

\usepackage{xcolor}
\usepackage{enumitem}
\DeclareMathOperator*{\argmin}{arg\,min}
\usepackage[utf8]{inputenc} 
\newcommand{\cal}{\mathcal}

 \numberwithin{equation}{section}

\newcommand{\hcal}{\mathcal{H}}

\usepackage{amsmath}

\def\XXint#1#2#3{{\setbox0=\hbox{$#1{#2#3}{%
\int}$ }
\vcenter{\hbox{$#2#3$ }}\kern-.6\wd0}}

\renewcommand{\epsilon}{\varepsilon}
\newtheorem{theorem}{Theorem}

\newtheorem{lemma}[theorem]{Lemma}
\newtheorem{corollary}[theorem]{Corollary}

\newtheorem{proposition}[theorem]{Proposition}
\newtheorem{definition}[theorem]{Definition}
\newtheorem{remark}[theorem]{Remark}
\newcommand{\bth}{\begin{theorem}}
\newcommand{\ble}{\begin{lemma}}
\newcommand{\bcor}{\begin{corr}}

\newcommand{\bdeff}{\begin{deff}}

\newcommand{\bprop}{\begin{proposition}}
\newcommand{\ele}{\end{lemma}}
\newcommand{\ecor}{\end{corr}}
\newcommand{\edeff}{\end{deff}}

\numberwithin{theorem}{section}

\newcommand{\eprop}{\end{proposition}}

\newcommand{\supp}{\text{supp }}
\renewcommand{\Pi}{\varPi}

\renewcommand{\epsilon}{\varepsilon}
\newcommand{\sgn}{{\text {sgn}}}

\newcommand{\R}{{\mathbb R}}
\newcommand{\Z}{{\mathbb Z}}

\usepackage{esint}

\usepackage{comment}

\newcommand{\ed}{\color{black}}

\usepackage{geometry}

\allowdisplaybreaks

\makeatletter
\@namedef{subjclassname@2020}{\textup{2020} Mathematics Subject Classification}
\makeatother

\begin{document}

\title[Commutators of Fractional Integrals with $\operatorname{BMO}^\beta$ Functions]
{Commutators of Fractional Integrals with $\operatorname{BMO}^\beta$ Functions}

\author[Y.-W.B. Chen]{You-Wei Benson Chen}

\address[Y.-W. Chen]{
Department of Mathematics, National Changhua University of Education,
No. 1, Jin-De Road, Changhua City, Taiwan
}
\email{bensonchen@cc.ncue.edu.tw}

\author[A. Claros]{Alejandro Claros}

\address[A. Claros]{BCAM -- Basque Center for Applied Mathematics, Bilbao, Spain}
\email{aclaros@bcamath.org}

\address[A. Claros]{Universidad del País Vasco / Euskal Herriko Unibertsitatea (UPV/EHU), Bilbao, Spain}
\email{aclaros003@ikasle.ehu.eus}

\thanks{Y.-W. Chen is supported by the National Science and Technology Council of Taiwan under research grant number 114-2115-M-018-003-MY2. A. Claros is partially supported by the Basque Government through the BERC 2022-2025 program, by the Spanish Ministry of Science and Innovation through Grant PRE2021-099091 funded by BCAM Severo Ochoa accreditation CEX2021-001142-S/MICIN/AEI/10.13039/501100011033 and by ESF+, and by the project PID2023-146646NB-I00 funded by MICIU/AEI/10.13039/501100011033 and by ESF+.}

\subjclass[2020]{Primary 42B20, 42B25; Secondary 42B35, 28A78}

\keywords{Riesz potential, commutator, Hausdorff content, Choquet integral}

\begin{abstract}
We study commutators of the Riesz potential $I_\alpha$ with functions $b$ in the capacitary space $\mathrm{BMO}^\beta(\mathbb{R}^n)$, defined through the Hausdorff content $\hcal^\beta_\infty$. We prove a Chanillo-type theorem characterising $\mathrm{BMO}^\beta(\mathbb{R}^n)$ via the boundedness of the commutator $[b,I_\alpha]$ on capacitary Lebesgue spaces. In addition, we obtain the endpoint estimate in the form of a capacitary modular weak-type inequality. These results follow from a pointwise estimate for the $\beta$-dimensional sharp maximal function of the commutator, together with a capacitary Fefferman-Stein inequality recently proved in \cite{ChenClaros}
\end{abstract}

\maketitle

\section{Introduction and main results}

Commutators of singular integral operators play a central role in harmonic analysis and partial differential equations. The classical theorem of Coifman, Rochberg, and Weiss \cite{MR0412721} asserts that for any Calderón-Zygmund operator $T$ and any locally integrable function $b$, the commutator
\begin{equation*}
	[b,T]f := b\,Tf - T(bf)
\end{equation*}
is bounded on $L^p(\mathbb{R}^n)$ for $1<p<\infty$ if and only if $b\in \operatorname{BMO} (\mathbb{R}^n)$. This result not only characterises the space of functions of bounded mean oscillation $(\operatorname{BMO})$ but also reveals its intrinsic connection with the structure of singular integrals.

A natural extension of this problem concerns commutators of fractional integral operators. For $0<\alpha<n$, the Riesz potential
\begin{equation*}
	I_\alpha f(x) = \int_{\mathbb{R}^n} \frac{f(y)}{|x-y|^{n-\alpha}}\,dy
\end{equation*}
maps $L^p(\mathbb{R}^n)$ to $L^q(\mathbb{R}^n)$, where $1<p<\frac{n}{\alpha}$ and $\tfrac{1}{q} = \tfrac{1}{p}-\tfrac{\alpha}{n}$. Chanillo \cite{chanillo} proved that the corresponding Riesz commutator
\begin{equation*}
	[b,I_\alpha]f = b\,I_\alpha f - I_\alpha(bf)
\end{equation*}
is bounded from $L^p(\mathbb{R}^n)$ to $L^q(\mathbb{R}^n)$ precisely when $b\in\operatorname{BMO} (\mathbb{R}^n)$, and moreover
\begin{equation*}
\| [b,I_\alpha] : L^p(\mathbb{R}^n)\longrightarrow L^q(\mathbb{R}^n) \|
\simeq \|b\|_{\operatorname{BMO} (\mathbb{R}^n)}.
\end{equation*}

Later contributions by Cruz-Uribe and Fiorenza \cite{Cruz-Uribe-Fiorenza} (see also \cite{Ding}) and by Adams and Xiao \cite{MR2922610} refined these results, establishing the endpoint cases $p=1$ and $p=\tfrac{n}{\alpha}$, respectively. In particular, Cruz-Uribe and Fiorenza obtained a modular weak-type estimate of the form
\begin{equation*}
|\{x\in\R^n : |[b,I_\alpha]f(x)|>t\}| 
\le C\, \Psi \left(\int_{\R^n} B\left( \|b\|_{\operatorname{BMO}}\frac{|f(x)|}{t}\right)dx \right),
\end{equation*}
for all $t>0$, where $B(t)=t\log(e+t)$ and $\Psi(t) = [t\log (e+t^{\alpha/n})]^{n /(n-\alpha)}$.

Given $0<\beta \le n$, we define the Hausdorff content $\mathcal{H}^\beta_\infty$ by 
\begin{equation*}
	\mathcal{H}^\beta_\infty(E) := \inf \left\lbrace \sum_{i=1}^\infty \omega_{\beta} \,r_i^\beta : E \subset \bigcup_{i=1}^\infty B(x_i, r_i)\right\rbrace ,
\end{equation*}
where $\omega_{\beta} = \pi^{\beta/2}/\Gamma(\tfrac{\beta}{2}+1\bigr)$ is a normalization constant. The motivation to define function spaces measured with respect to the Hausdorff content $\mathcal{H}^\beta_\infty$ arises naturally from the study of sharp forms of the Sobolev embedding in the critical exponent (see, e.g., \cite{Yudovich, Adams1973, Cianchi:2008, FontanaMorpurgo, MS}). In this borderline regime, one often encounters not only the classical exponential integrability phenomenon, but also refined inequalities that capture concentration effects along lower-dimensional sets. Within this framework, it is natural to revisit function spaces that describe oscillatory behavior through the lens of Hausdorff content. For $0<\beta\le n$, the space of functions of bounded $\beta$-dimensional mean oscillation, denoted by $\operatorname{BMO}^\beta(\mathbb{R}^n)$, introduced in \cite{Chen-Spector},consists of all locally integrable functions $b$ (with respect to the Hausdorff content $\mathcal{H}^\beta_\infty$) such that
\begin{equation*}
	\|b\|_{\operatorname{BMO}^\beta(\mathbb{R}^n)} := \sup_{Q} \inf_{c\in \R}\frac{1}{\ell(Q)^{\beta}} \int_Q |b - c|\, d\mathcal{H}^\beta_\infty < \infty,
\end{equation*}
where the integral with respect to $\mathcal{H}^\beta_\infty$ is understood in the Choquet sense (see Section 2). When $\beta=n$, this definition reduces to the classical space $\operatorname{BMO}(\mathbb{R}^n)$ of John and Nirenberg \cite{JN}; for smaller values of $\beta$, the space $\operatorname{BMO}^\beta(\mathbb{R}^n)$ is smaller than $\operatorname{BMO}(\mathbb{R}^n)$ (see \cite[Corollary 1.6]{Chen-Spector}).

Building on this idea, recent works have developed analytic tools adapted to $\mathcal{H}^\beta_\infty$, including maximal inequalities \cite{chen2023capacitary, ChenClaros}, Poincaré-Sobolev inequalities \cite{Petteri_2023, harjulehto2023sobolev, HuangCaoYangZhuoChoquetPS}, and Riesz potential theory \cite{harjulehto2024hausdorff}. Motivated by these developments, the aim of this paper is to extend the classical Riesz commutator theory of Chanillo \cite{chanillo} and the modular endpoint results of Cruz-Uribe and Fiorenza \cite{Cruz-Uribe-Fiorenza} to this capacitary setting. Specifically, we establish strong and weak-type inequalities for the Riesz commutator $[b, I_\alpha]$ with symbols $b\in {\rm BMO}^\beta(\mathbb{R}^n)$, with respect to the Hausdorff content $\mathcal{H}^\beta_\infty$. Our results recover the Euclidean theory when $\beta=n$.

The first main result of this paper establishes a pointwise estimate for the $\beta$-dimensional sharp maximal function of the commutator. This result can be viewed as an extension of Theorem~1.3 in \cite{Cruz-Uribe-Fiorenza} to the setting of the Choquet integral with respect to Hausdorff content.

\begin{theorem}\label{thm: key pointwise}
Let $B(t)=t\log(e+t)$. Fix $0<\beta\le n$ and $0<\alpha<\beta$. Let $b$ be a measurable function in $\operatorname{BMO}^\beta (\R^n)$ and let $f$ be a nonnegative measurable function. Then there exists a constant $C=C(\alpha,\beta,n)>0$ such that, for every $s>1$ and every $x\in\R^n$,
	\begin{equation}\label{key pointwise estimate}
		\mathcal{M}^\#_\beta ( [b, I_\alpha] f )(x) \le C \| b\|_{\operatorname{BMO}^\beta (\R^n) } \left( \mathcal{M}_{\mathcal{H}_\infty^\beta} ((I_\alpha f)^s) (x)^\frac{1}{s} + \mathcal{M}_{\alpha, B, \mathcal{H}_\infty^\beta} f(x) \right),
	\end{equation}
	where $\mathcal{M}_{\alpha,B,\mathcal{H}^\beta_\infty}$ denotes the fractional Orlicz maximal operator associated with $B$ with respect to the Hausdorff content $\mathcal{H}^\beta_\infty$ (see Section~3). 
    
    Moreover, if $\beta\in(n-\alpha,n]$, then there exists  $C=C(\alpha,\beta,n)>0$ such that, for every $x\in\R^n$,
    \begin{equation}\label{key pointwise estimate 2}
		\mathcal{M}^\#_\beta ( [b, I_\alpha] f )(x) \le C \| b\|_{\operatorname{BMO}^\beta (\R^n) } \left( I_\alpha f (x) + \mathcal{M}_{\alpha, B, \mathcal{H}_\infty^\beta} f(x) \right).
	\end{equation}
\end{theorem}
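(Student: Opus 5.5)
We follow the classical Cruz-Uribe--Fiorenza scheme, with every average taken in the Choquet sense with respect to $\mathcal{H}^\beta_\infty$. The sharp maximal function is
\[
\mathcal{M}^\#_\beta g(x)=\sup_{Q\ni x}\inf_c \frac{1}{\ell(Q)^\beta}\int_Q |g-c|\,d\mathcal{H}^\beta_\infty ,
\]
so it suffices to fix a cube $Q\ni x$ and exhibit a constant $c_Q$ for which the normalized Choquet integral of $|[b,I_\alpha]f-c_Q|$ over $Q$ is controlled by the right-hand side of \eqref{key pointwise estimate}. Let $\lambda=b_{2Q}$ denote a (near-)minimizing constant for $b$ on $2Q$; such a choice is what the $\operatorname{BMO}^\beta$ definition provides. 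Write $f=f_1+f_2$ with $f_1=f\chi_{2Q}$, and use
\[
[b,I_\alpha]f=(b-\lambda)I_\alpha f-I_\alpha((b-\lambda)f_1)-I_\alpha((b-\lambda)f_2).
\]
We take $c_Q=-I_\alpha((b-\lambda)f_2)(x_Q)$, where $x_Q$ is the center of $Q$. The Choquet integral is only quasi-subadditive, but with a constant depending on $n,\beta$, so the three pieces can be estimated separately.

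\textbf{Term 1.} Apply Hölder's inequality for Choquet integrals with exponents $s',s$:
\[
\frac{1}{\ell(Q)^\beta}\int_Q|b-\lambda|\,I_\alpha f\,d\mathcal{H}^\beta_\infty\lesssim\Big(\frac{1}{\ell(Q)^\beta}\int_Q|b-\lambda|^{s'}d\mathcal{H}^\beta_\infty\Big)^{1/s'}\mathcal{M}_{\mathcal{H}^\beta_\infty}((I_\alpha f)^s)(x)^{1/s}.
\]
The first factor is bounded by $C\|b\|_{\operatorname{BMO}^\beta}$ via the John--Nirenberg inequality for $\operatorname{BMO}^\beta$ from \cite{Chen-Spector}.

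For the second statement, when $\beta>n-\alpha$, we would instead compare $I_\alpha f$ on $Q$ with $I_\alpha f(x)$. We can bound $I_\alpha f(y)\lesssim I_\alpha f_2(x)+I_\alpha f_1(y)$ for $y\in Q$. The local part $I_\alpha f_1$ is then handled together with Term 2. The point is that the kernel $|y-z|^{\alpha-n}$ is $\mathcal{H}^\beta_\infty$-integrable over $Q$ exactly when $n-\alpha<\beta$; this is where the restriction enters. I would check this step carefully.

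\textbf{Term 2.} This is the main obstacle. We need
\[
\frac{1}{\ell(Q)^\beta}\int_Q I_\alpha(|b-\lambda|f_1)\,d\mathcal{H}^\beta_\infty\lesssim \|b\|\,\mathcal{M}_{\alpha,B,\mathcal{H}^\beta_\infty}f(x).
\]
The integrand $|b-\lambda|f_1$ is integrated against Lebesgue measure inside $I_\alpha$, but the outer integral is Choquet. My first attempt is to dominate $I_\alpha(g\chi_{2Q})$ on $Q$ by the dyadic sum $\sum_k 2^{-k(n-\alpha)}\ell(Q)^{\alpha-n}\dots$, i.e. a Hedberg-type decomposition into annuli. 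Each local average is then converted into a Choquet average over $2Q$, using that $\ell^{-n}\int|g|\,dx\lesssim \ell^{-\beta}\int |g|\,d\mathcal{H}^\beta_\infty$ on cubes, which follows from $dx\lesssim d\mathcal{H}^n_\infty$ and a comparison of contents. Next apply the generalized Hölder inequality in the Choquet setting with the Young pair $\exp L$ and $L\log L$. This gives the Luxemburg norm $\|b-\lambda\|_{\exp L,2Q}\lesssim\|b\|_{\operatorname{BMO}^\beta}$ (again by John--Nirenberg) times $\ell(Q)^\alpha\|f\|_{B,2Q}$, which is at most $\mathcal{M}_{\alpha,B,\mathcal{H}^\beta_\infty}f(x)$. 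The condition $\alpha<\beta$ is what keeps the geometric sums and the fractional normalization $\ell^{\alpha}$ compatible.

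\textbf{Term 3.} For $y\in Q$ and $z\notin 2Q$ we have the smoothness bound
\[
\big||y-z|^{\alpha-n}-|x_Q-z|^{\alpha-n}\big|\lesssim \ell(Q)|x_Q-z|^{\alpha-n-1}.
\]
Summing over the dyadic annuli $2^{k+1}Q\setminus 2^kQ$, with $b_{2^{k+1}Q}-b_{2Q}=O(k\|b\|)$, yields
\[
\sum_k 2^{-k}\,(2^k\ell)^{\alpha-n}\int_{2^{k+1}Q}\big(|b-b_{2^{k+1}Q}|+k\|b\|\big)f\,dx .
\]
Each summand is estimated as in Term 2 via generalized Hölder in the $\exp L$ / $L\log L$ pair, giving $k\,2^{-k}\|b\|\,\mathcal{M}_{\alpha,B,\mathcal{H}^\beta_\infty}f(x)$. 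This sum converges. Since the resulting bound is constant in $y$, the Choquet average over $Q$ changes it only by the normalization $\mathcal{H}^\beta_\infty(Q)\simeq\ell^\beta$. Combining the three terms and taking the supremum over $Q$ gives both \eqref{key pointwise estimate} and \eqref{key pointwise estimate 2}.
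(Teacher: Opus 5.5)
Your decomposition, the choice of $c_Q$, and Terms 1 and 3 follow the paper. Term 3 is Lemma~\ref{outsidef2inMorrey} together with Lemma~\ref{bmo cQ} and the $\exp L$/$L\log L$ Hölder inequality. Your conversion $\int_P|h|\,dx\lesssim\ell(P)^{n-\beta}\int_P|h|\,d\mathcal{H}^\beta_\infty$ is legitimate there because the cubes $P=2^{k+1}Q$ are fixed.

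The genuine gap is Term 2. For $y\in Q$ and $g$ supported in $2Q$, the annular decomposition of $I_\alpha g(y)$ is centred at $y$ with \emph{shrinking} radii $r_k\simeq 2^{-k}\ell(Q)$. It gives $\sum_k r_k^{\alpha-n}\int_{B(y,r_k)}|g|\,dz$; the expanding factor $2^{-k(n-\alpha)}\ell(Q)^{\alpha-n}$ you wrote belongs to Term 3. Converting the $k$-th average into a Choquet average over $2Q$ yields only $r_k^{\alpha-\beta}\int_{2Q}|g|\,d\mathcal{H}^\beta_\infty$, and $\sum_k r_k^{\alpha-\beta}=\infty$ precisely because $\alpha<\beta$. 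Indeed, no bound for $I_\alpha g$ on $Q$ uniform in $y$ can hold: test $g=\epsilon^{-\beta}\chi_{B(y_0,\epsilon)}$. If you instead integrate in $y$ first (a Fubini-type interchange via Morrey-measure duality), you need $\int_Q|y-z|^{\alpha-n}\,d\mathcal{H}^\beta_\infty(y)<\infty$, i.e. $\beta>n-\alpha$. Then the first statement would only be obtained in the restricted range.

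The missing ingredient is a capacitary Sobolev-type estimate, the paper's Lemma~\ref{forcompactmorreya}: $\int_Q|I_\alpha g|\,d\mathcal{H}^\beta_\infty\le C\,\ell(Q)^\alpha\int_{2Q}|g|\,d\mathcal{H}^\beta_\infty$. It follows from three facts: $|I_\alpha g|\lesssim\ell(Q)^{\alpha-\eta}\mathcal{M}_\eta g$ on $Q$ with $0<\eta<\alpha$, an $L^1(\mathcal{H}^\beta_\infty)\to L^{\beta/(\beta-\eta)}(\mathcal{H}^\beta_\infty)$ bound for $\mathcal{M}_\eta$, and Hölder on $Q$. For $\beta<n$ you can also apply Theorem~\ref{HHestimate} with $p=1$ and Hölder on $Q$ directly. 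Only after this step does your $\exp L$/$L\log L$ argument apply.

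The second statement is also left open. After writing $I_\alpha f=I_\alpha f_1+I_\alpha f_2$, the far part is fine, since $I_\alpha f_2(y)\lesssim I_\alpha f_2(x)$ for $y\in Q$. The local piece $\ell(Q)^{-\beta}\int_Q|b-\lambda|\,I_\alpha f_1\,d\mathcal{H}^\beta_\infty$, however, has $|b-\lambda|$ \emph{outside} the potential. So it is not Term 2 and cannot be ``handled together'' with it, and you give no argument for it.

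The paper does not split here. For $\beta>n-\alpha$, Lemma~\ref{lem:Ialpha A1} gives $I_\alpha f\in A_1^\beta$, via $|\cdot|^{\alpha-n}\in A_1^\beta$; this is where the kernel integrability you mention is used. The reverse Hölder inequality, Theorem~\ref{thm: RHI A_1}, then lets one take $s>1$ in Term 1 with $\bigl(\ell(Q)^{-\beta}\int_Q(I_\alpha f)^s\,d\mathcal{H}^\beta_\infty\bigr)^{1/s}\lesssim\ell(Q)^{-\beta}\int_Q I_\alpha f\,d\mathcal{H}^\beta_\infty\lesssim I_\alpha f(x)$.

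Your splitting can also be closed. Use Hölder with exponents $r'$ and $r=\beta/(\beta-\alpha)$, Theorem~\ref{thm: JN p}, and the local bound $\|I_\alpha f_1\|_{L^r(\mathcal{H}^\beta_\infty)}\lesssim\int_{2Q}f\,d\mathcal{H}^\beta_\infty$. That bound is Theorem~\ref{HHestimate} with $p=1$ when $\beta<n$; when $\beta=n$, use weak type with a smaller $r$. Together these give $\lesssim\|b\|\,\ell(Q)^{\alpha-\beta}\int_{2Q}f\,d\mathcal{H}^\beta_\infty\lesssim\|b\|\,\mathcal{M}_{\alpha,B,\mathcal{H}^\beta_\infty}f(x)$. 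Note that this closure does not invoke $\beta>n-\alpha$ at all.
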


Here, $\mathcal{M}_{\mathcal{H}^\beta_\infty}$ denotes the $\beta$-dimensional maximal operator introduced in \cite{chen2023capacitary}, defined by
\begin{align*}
 \mathcal{M}_{\mathcal{H}^\beta_\infty} f(x)   = \sup_{x\in Q}  \frac{1}{\mathcal{H}^{\beta}_{\infty} (Q)} \int_{Q} |f| \,\,d\mathcal{H}^{\beta}_{\infty},
\end{align*}
where the supremum is taken over all cubes in $\R^n$ with sides parallel to the coordinate axes and with $x\in Q$. Similarly,  $\mathcal{M}^{\#} _\beta$ denotes the $\beta$-dimensional sharp maximal function studied in \cite{ChenClaros}, defined as
\begin{align*}
\mathcal{M}^{\#} _\beta f(x) := \sup_{x\in Q} \inf_{c \in \mathbb{R}}  \frac{1}{\ell(Q)^\beta} \int_Q |f-c| \, d \mathcal{H}^{\beta}_\infty.
\end{align*}

\begin{remark}\label{Rmk pointwise}
By adapting the proof of Theorem~\ref{thm: key pointwise}, one can avoid the use of Orlicz maximal functions at the expense of obtaining a weaker inequality (see Remark \ref{Rmk pointwise proof}). Specifically, for every $s,t>1$, there exists a constant $C=C(\alpha,\beta,n,s,t)>0$ such that for all $x\in\R^n$,
	\begin{equation*}
		\mathcal{M}^\#_\beta ( [b, I_\alpha] f )(x) \le C \| b\|_{\operatorname{BMO}^\beta (\R^n) } \left( \mathcal{M}_{\mathcal{H}_\infty^\beta} ((I_\alpha f)^s) (x)^\frac{1}{s} + \mathcal{M}_{\alpha t, \mathcal{H}_\infty^\beta} (f^t)(x)^\frac{1}{t} \right),
	\end{equation*}
	where 
	\begin{align*}
 \mathcal{M}_{\alpha , \mathcal{H}^\beta_\infty} f(x)   = \sup_{x\in Q}  \frac{\ell(Q)^\alpha}{\mathcal{H}^{\beta}_{\infty} (Q)} \int_{Q} |f| \,\,d\mathcal{H}^{\beta}_{\infty}.
\end{align*}
\end{remark}

By combining the pointwise inequality stated above with the capacitary Fefferman-Stein inequality from \cite{ChenClaros} (see Theorem \ref{thm: beta FS} below), and using the boundedness of both the fractional integral $I_\alpha$ and the fractional maximal operator $M_{\alpha,\mathcal{H}_\infty^\beta}$ on capacitary Lebesgue spaces obtained in \cite{harjulehto2024hausdorff}, we obtain the following result, which extends the characterization theorem of Chanillo \cite{chanillo} to the present context of Hausdorff content.

\begin{theorem}\label{thm: characterization BMO commutator}
Let $0<\beta\le n$ and $0<\alpha<\beta$. For $1<p<\frac{\beta}{\alpha}$, define the exponent $q$ by the relation
\begin{equation*}
	\frac{1}{p}-\frac{1}{q}= \frac{\alpha}{\beta}.
\end{equation*}
Let $b$ be a measurable function such that the commutator $[b, I_\alpha]$ is well defined. Then, the following statements are equivalent:
\begin{enumerate}[label=(\roman*)]
	\item $b\in \operatorname{BMO}^\beta(\R^n)$.
	\item There exists a constant $C=C(\alpha, \beta, n, p)>0$ such that
		\begin{equation}\label{eq: char BMO commutator}
			\left( \int_{\R^n} \left| [b, I_\alpha] f \right|^q d\mathcal{H}_\infty^\beta \right)^{\frac{1}{q}} \le C  \left( \int_{\R^n} \left|f \right|^p d\mathcal{H}_\infty^\beta \right)^{\frac{1}{p}},
		\end{equation}
	for all bounded functions $f$ with compact support.
\end{enumerate}

Furthermore, the norms satisfy the equivalence
\begin{equation*}
	\| [b,I_\alpha]\|_{(p,q)} \simeq_{n,\alpha, \beta, p } \|b\|_{\operatorname{BMO}^\beta(\R^n)},
\end{equation*}
where $\| [b,I_\alpha]\|_{(p,q)} $ is the best possible constant $C$ in \eqref{eq: char BMO commutator}. 
\end{theorem}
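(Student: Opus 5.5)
For (i)$\Rightarrow$(ii) I would follow the route sketched before the statement. Replacing $f$ by $|f|$ loses nothing, since $|[b,I_\alpha]f|\le [\,|b-\cdot|\,]$-type bounds hold after splitting $f=f^+-f^-$. I therefore take $f\ge 0$ bounded with compact support. The capacitary Fefferman--Stein inequality (Theorem~\ref{thm: beta FS}) then gives
\[
\Big(\int_{\R^n} |[b,I_\alpha]f|^q\,d\hcal^\beta_\infty\Big)^{1/q}\lesssim \Big(\int_{\R^n} (\mathcal M^\#_\beta [b,I_\alpha]f)^q\,d\hcal^\beta_\infty\Big)^{1/q}.
\]
This needs an a priori finiteness/decay condition on $[b,I_\alpha]f$, which I would verify first, or obtain by truncating $b$ to $b_N=\max(-N,\min(b,N))$. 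Truncation preserves $\operatorname{BMO}^\beta$ norms up to a constant, and one passes to the limit by Fatou for Choquet integrals.

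Next I would apply the pointwise estimate of Theorem~\ref{thm: key pointwise}, in the Orlicz-free form of Remark~\ref{Rmk pointwise}, with $1<s<q$ and $1<t<p$ both close to $1$:
\[
\mathcal M^\#_\beta([b,I_\alpha]f)\lesssim \|b\|_{\operatorname{BMO}^\beta}\Big(\mathcal M_{\hcal^\beta_\infty}((I_\alpha f)^s)^{1/s}+\mathcal M_{\alpha t,\hcal^\beta_\infty}(f^t)^{1/t}\Big).
\]
The first term is controlled by the $L^{q/s}(\hcal^\beta_\infty)$ boundedness of $\mathcal M_{\hcal^\beta_\infty}$ from \cite{chen2023capacitary}, since $q/s>1$, followed by $I_\alpha:L^p(\hcal^\beta_\infty)\to L^q(\hcal^\beta_\infty)$ from \cite{harjulehto2024hausdorff}. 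For the second term, $\mathcal M_{\alpha t,\hcal^\beta_\infty}$ maps $L^{p/t}$ to $L^{q/t}$, because $\tfrac tp-\tfrac tq=\tfrac{\alpha t}{\beta}$ and $\alpha t<\beta$ for $t$ near $1$. This gives $\|(\mathcal M_{\alpha t}(f^t))^{1/t}\|_{L^q}\lesssim\|f\|_{L^p}$. Combining the three displays yields (ii) with constant $\lesssim\|b\|_{\operatorname{BMO}^\beta}$.

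For (ii)$\Rightarrow$(i) I would adapt Janson's/Chanillo's Fourier-series argument. Fix a cube $Q$ with centre $x_0$ and side $\ell$, and choose $z_0$ with $|z_0|=k\ell$ ($k$ large, depending on $n,\alpha$) so that $Q':=Q+z_0$ is far away. On $Q\times Q'$ the function $|x-y|^{n-\alpha}$ is smooth and close to $(k\ell)^{n-\alpha}$. Expanding it in an absolutely convergent Fourier series
\[
|x-y|^{n-\alpha}=(k\ell)^{n-\alpha}\sum_m a_m e^{i v_m\cdot x}e^{iw_m\cdot y},\qquad \sum_m|a_m|\lesssim 1,
\]
set $s(x)=\operatorname{sgn}(b(x)-c)$ with $c$ a median-type constant of $b$ on $Q'$. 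This gives
\[
\int_Q |b-c|\,d\hcal^\beta_\infty\lesssim \int_Q\Big|\int_{Q'}(b(x)-b(y))s(x)\,\chi_{Q'}(y)\,dy\Big|\,d\hcal^\beta_\infty(x),
\]
and after expanding, the inner quantity is bounded by $(k\ell)^{n-\alpha}\sum_m|a_m|\,|[b,I_\alpha](e^{iw_m\cdot}\chi_{Q'})(x)|$. Here one must handle the conversion between the Lebesgue average $\ell^{-n}\int_{Q'}dy$ and $\hcal^\beta_\infty$ integrals. The quasi-subadditivity of the Choquet integral, which holds for countable sums with constants via the equivalence of $\hcal^\beta_\infty$ with dyadic content, then lets me pull out the sum. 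Hölder for Choquet integrals gives $\int_Q |g|\,d\hcal^\beta_\infty\lesssim \ell^{\beta(1-1/q)}\|g\|_{L^q(\hcal^\beta_\infty)}$. Applying (ii) with $\|\chi_{Q'}\|_{L^p(\hcal^\beta_\infty)}\simeq\ell^{\beta/p}$, the powers combine as $\ell^{n-\alpha}\cdot\ell^{-n}\cdot\ell^{\beta-\beta/q}\cdot\ell^{\beta/p}=\ell^{\beta}$, using $\beta/p-\beta/q=\alpha$. This yields $\ell^{-\beta}\int_Q|b-c|\,d\hcal^\beta_\infty\lesssim\|[b,I_\alpha]\|_{(p,q)}$.

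The main obstacle is this converse step. One cannot use the linearity of the Choquet integral, so the key identity $\int_Q |b-c| = \int_Q (b-c)s$ must be carried out pointwise before integrating against $\hcal^\beta_\infty$. The precise choice is to take $c=b_{Q'}$, the Lebesgue average. Then, pointwise for $x\in Q$, $|b(x)-b_{Q'}|\le \ell^{-n}\big|\int_{Q'}(b(x)-b(y))\,dy\big|$ exactly, and the Fourier expansion of $|x-y|^{n-\alpha}$ times $|x-y|^{\alpha-n}$ rewrites this as the required combination of commutators. This avoids the sign function entirely. Controlling the countable sum under the Choquet integral needs the $\sum|a_m|$ bound together with a countable quasi-triangle inequality, for which I would work with an equivalent dyadic content that is countably subadditive.
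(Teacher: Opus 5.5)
Your proposal is correct and follows essentially the paper's proof. For (i)$\Rightarrow$(ii) you use the same truncation of $b$, the capacitary Fefferman--Stein inequality, the Orlicz-free pointwise bound of Remark~\ref{Rmk pointwise} with the $L^{q/s}$ and $L^{p/t}\to L^{q/t}$ bounds, and Fatou; for (ii)$\Rightarrow$(i) you use the same Janson-type Fourier expansion with the Lebesgue average as constant, followed by H\"older and the hypothesis applied to $e^{iw_m\cdot}\chi_{Q'}$. The differences are cosmetic: a far translate $Q'$ replaces the paper's adjacent cube $P_R$ with a periodic extension of $|x|^{n-\alpha}$; you drop the superfluous sign function; and you justify countable subadditivity through the dyadic content, which is more careful than the paper's bare appeal to ``subadditivity''. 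One small point: the reduction to $f\ge 0$ should be stated as splitting $f=f^+-f^-$ (and into real and imaginary parts) using linearity of the commutator, rather than as ``replacing $f$ by $|f|$''.
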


\begin{remark}\label{rem: extension}
Let $1\le p<\infty$ and consider the capacitary Lebesgue space
\begin{equation*}
L^{p}\bigl(\mathcal{H}^{\beta}_{\infty} \bigr)
:=
\left\{
f:\ f \text{ is }\mathcal{H}^{\beta}_{\infty}\text{-quasicontinuous and}\ 
\|f\|_{L^{p}(\mathcal{H}^{\beta}_{\infty})}
<\infty
\right\},
\end{equation*}
where we refer to \cite[p.~2]{Chen-Spector} for the definition of
$\mathcal{H}^{\beta}_{\infty}$-quasicontinuity. This space is endowed with the norm 
\begin{equation*}
    \|f\|_{L^{p}(\mathcal{H}^{\beta}_{\infty})}
:=
\left(\int_{\R^n}|f|^{p}\,d\mathcal{H}^{\beta}_{\infty}\right)^{1/p}.
\end{equation*}

In the implication $(i)\Rightarrow (ii)$ in Theorem~\ref{thm: characterization BMO commutator}, the commutator estimate is proved for the dense subspace $L^{p}\bigl(\mathcal{H}^{\beta}_{\infty}\bigr)\cap C_c(\R^n)$; more precisely, there exists $C>0$ such that
\begin{equation}\label{eq:boundedness_dense_subspace}
    \left( \int_{\R^n} \left| [b, I_\alpha] f \right|^q d\mathcal{H}_\infty^\beta \right)^{\frac{1}{q}} \le C  \|b\|_{\operatorname{BMO}^\beta(\R^n)}\left( \int_{\R^n} \left|f \right|^p d\mathcal{H}_\infty^\beta \right)^{\frac{1}{p}},
\end{equation}
for every $f\in L^{p}\bigl(\mathcal{H}^{\beta}_{\infty}\bigr)\cap C_c(\R^n)$. Since this subspace is dense in $L^{p}\bigl(\mathcal{H}^{\beta}_{\infty}\bigr)$ (see \cite{AdamsChoquet1} and \cite[p.~6]{basak2025uncenteredfractionalmaximalfunctions}), the inequality \eqref{eq:boundedness_dense_subspace} extends to all
$f\in L^{p}\bigl(\mathcal{H}^{\beta}_{\infty}\bigr)$.
\end{remark}

In the Euclidean setting, Cruz-Uribe and Fiorenza \cite{Cruz-Uribe-Fiorenza} established a modular weak-type inequality for the Riesz commutator at the endpoint case. We extend their result to the capacitary framework by replacing the Lebesgue measure with the Hausdorff content and considering symbols in $\operatorname{BMO}^\beta(\mathbb{R}^n)$. A key ingredient in our proof is the boundedness of the capacitary fractional maximal operator $\mathcal{M}_{\alpha, B, \mathcal{H}^\beta_\infty}$, which will be studied in Section 3.

\begin{theorem}\label{thm: modular weak}
Let $0<\beta \le n$, $0<\alpha<\beta$, and assume that $\beta\in (n-\alpha , n]$. Let $b$ be a measurable function in $\operatorname{BMO}^\beta (\R^n)$. Define $B(t)= t \log (e+t)$ and $\Psi(t) = [t\log (e+t^{\alpha/\beta})]^{\beta /(\beta-\alpha)}$. Then, there exists a constant $C=C(\alpha, \beta, n)>0$ such that for every bounded measurable function $f$ with compact support, we have
\begin{align}\label{eq: modular weak}
	\mathcal{H}_\infty^\beta \big( \{ x\in \R^n : |[b, I_\alpha ]f(x)|>t \} \big) \le C\, \Psi \left( \int_{\R^n} B \left( \|b\|_{\operatorname{BMO}^\beta(\R^n)}  \frac{|f(x)|}{t} \right) d\mathcal{H}_\infty^\beta \right)
\end{align}
for all $t>0$.
\end{theorem}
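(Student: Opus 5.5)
We follow the Cruz-Uribe--Fiorenza strategy, with the capacitary tools in place of the Lebesgue ones. First, by homogeneity (replace $b$ by $b/\|b\|_{\operatorname{BMO}^\beta}$ and $f$ by $f/t$) we may assume $\|b\|_{\operatorname{BMO}^\beta(\R^n)}=1$ and $t=1$. Writing $f=f^+-f^-$, it suffices to treat $f\ge 0$ bounded with compact support, since $[b,I_\alpha]$ is linear and $B$ is increasing.

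The core step is a capacitary good-$\lambda$ (or weak-type Fefferman--Stein) inequality. We aim for
\begin{equation*}
\mathcal{H}^\beta_\infty(\{|[b,I_\alpha]f|>1\}) \lesssim \sup_{\lambda>0}\ \mathcal{H}^\beta_\infty\big(\{\mathcal{M}^\#_\beta([b,I_\alpha]f)>c\lambda\}\big)\ \text{-type bounds},
\end{equation*}
that is, a weak-type version of the capacitary Fefferman--Stein inequality from \cite{ChenClaros}. This lets us pass from the commutator to its sharp maximal function. Since $\beta>n-\alpha$, the second pointwise estimate \eqref{key pointwise estimate 2} of Theorem~\ref{thm: key pointwise} then gives
\begin{equation*}
\mathcal{H}^\beta_\infty(\{|[b,I_\alpha]f|>1\})\lesssim \mathcal{H}^\beta_\infty(\{I_\alpha f>c\})+\mathcal{H}^\beta_\infty(\{\mathcal{M}_{\alpha,B,\mathcal{H}^\beta_\infty}f>c\}).
\end{equation*}
The finiteness hypotheses needed for the Fefferman--Stein inequality (the left-hand side being a priori finite, or $\mathcal{M}_{\mathcal{H}^\beta_\infty}$ of the function lying in a weak space) should follow from $f$ being bounded with compact support together with the local integrability of $b$ with respect to $\mathcal{H}^\beta_\infty$, via truncation if necessary.

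It remains to bound the two level sets. For the Riesz potential, we would use the capacitary weak-type $(1,\beta/(\beta-\alpha))$ estimate for $I_\alpha$ from \cite{harjulehto2024hausdorff}: $\mathcal{H}^\beta_\infty(\{I_\alpha f>c\})\lesssim (\int f\,d\mathcal{H}^\beta_\infty)^{\beta/(\beta-\alpha)}$. This is dominated by $\Psi(\int B(f)\,d\mathcal{H}^\beta_\infty)$, since $t\le B(t)$ and $t^{\beta/(\beta-\alpha)}\le\Psi(t)$. For the Orlicz fractional maximal operator we need the modular estimate
\begin{equation*}
\mathcal{H}^\beta_\infty(\{\mathcal{M}_{\alpha,B,\mathcal{H}^\beta_\infty}f>1\})\lesssim \Psi\Big(\int B(f)\,d\mathcal{H}^\beta_\infty\Big),
\end{equation*}
which is the boundedness result announced for Section 3. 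We would prove it with a covering argument. Take the set where the maximal function exceeds $1$ and cover each of its points by a dyadic-type cube $Q$ with $\ell(Q)^\alpha\|f\|_{B,Q,\mathcal{H}^\beta_\infty}>1$. Extract disjoint (or bounded-overlap, via the capacitary packing lemma) maximal cubes. On each selected cube, convert the Luxemburg norm condition into $\ell(Q)^{\beta}\lesssim \ell(Q)^{\alpha}$-weighted bounds of the form $\int_Q B(f\ell(Q)^\alpha)\,d\mathcal{H}^\beta_\infty\gtrsim\ell(Q)^\beta$. Using $B(st)\le B(s)B(t)$ and the relation between $\ell(Q)$ and $\int_Q B(f)$, solve for $\ell(Q)^\beta\lesssim\Psi(\int_Q B(f)\,d\mathcal{H}^\beta_\infty)$. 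Finally sum, using the convexity (superadditivity) of $\Psi$ together with the countable quasi-subadditivity of the Choquet integral over disjoint dyadic cubes.

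The main obstacle is the weak-type Fefferman--Stein step. The Choquet integral is not additive, and the classical good-$\lambda$ argument relies on the Calderón--Zygmund decomposition with respect to Lebesgue measure, so we must use dyadic Hausdorff content and the version of Fefferman--Stein in \cite{ChenClaros}. If only the strong $L^p$ form is available there, we would rerun its good-$\lambda$ inequality $\mathcal{H}^\beta_\infty(\{M^{d}g>2\lambda,\mathcal{M}^\#_\beta g\le\gamma\lambda\})\le C\gamma\,\mathcal{H}^\beta_\infty(\{M^{d}g>\lambda\})$ and take a supremum in $\lambda$, which yields the weak-type inequality directly.
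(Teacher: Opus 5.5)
\textbf{There is a genuine gap.} It sits in the step you yourself call the core one: passing from $\mathcal{H}^\beta_\infty(\{|[b,I_\alpha]f|>1\})$ to level sets of $I_\alpha f$ and $\mathcal{M}_{\alpha,B,\mathcal{H}^\beta_\infty}f$ at one fixed height $c$. No Fefferman--Stein argument gives this.

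The good-$\lambda$ inequality, whether in the form of \cite{ChenClaros} or your version with $2\lambda$ and $\lambda$, compares level sets of the dyadic maximal function at two \emph{different} heights. It therefore closes only after a supremum over all heights. In fact a single-level bound $\mathcal{H}^\beta_\infty(\{|g|>1\})\lesssim\mathcal{H}^\beta_\infty(\{\mathcal{M}^\#_\beta g>c\})$ is false already for $\beta=n$. Take $g=2(1-\delta\log^+|x|)^+$. Then $|\{g>1\}|\simeq e^{n/(2\delta)}$, while $\mathcal{M}^\#_n g\le\|g\|_{\operatorname{BMO}^n}\lesssim\delta$ everywhere.

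A supremum in $\lambda$ without a weight, as in your first display and your last paragraph, is vacuous at height $1$. A power weight $\lambda^p$ only produces homogeneous weak-$L^p$ statements, and these cannot yield the non-homogeneous right-hand side $\Psi(\int B(\cdot)\,d\mathcal{H}^\beta_\infty)$.

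\textbf{The missing idea} is the heart of the paper's proof, following Cruz-Uribe--Fiorenza and P\'erez. Run the good-$\lambda$ argument with a doubling weight $\varphi$; absorbing the second term uses the doubling property and gives \eqref{weak orlicz FS}. Then choose the specific non-homogeneous weight $\varphi(\lambda)=1/\Psi(B(1/\lambda))$, which is doubling. Height $1$ is recovered from
\[
\mathcal{H}^\beta_\infty(\{|g|>1\})\le\Psi(B(1))\sup_{\lambda>0}\varphi(\lambda)\,\mathcal{H}^\beta_\infty(\{|g|>\lambda\}).
\]
After applying \eqref{key pointwise estimate 2}, you must control $\sup_\lambda\varphi(\lambda)\mathcal{H}^\beta_\infty(\{I_\alpha f>\lambda/C\})$ and $\sup_\lambda\varphi(\lambda)\mathcal{H}^\beta_\infty(\{\mathcal{M}_{\alpha,B,\mathcal{H}^\beta_\infty}f>\lambda/C\})$, that is, at \emph{every} height:
\begin{itemize}
\item For the maximal term, apply Theorem~\ref{thm: weak orlicz fractional} at height $\lambda$. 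Then use $B(st)\le CB(s)B(t)$ and $\Psi(uv)\le C\Psi(u)\Psi(v)$ to get $\varphi(\lambda)\Psi(\int B(f/\lambda)\,d\mathcal{H}^\beta_\infty)\lesssim\Psi(\int B(f)\,d\mathcal{H}^\beta_\infty)$.
\item For the potential term, use the weak $(1,\frac{\beta}{\beta-\alpha})$ bound for $I_\alpha$ together with $\sup_\lambda\lambda^{-\beta/(\beta-\alpha)}/\Psi(B(1/\lambda))<\infty$.
\end{itemize}
Your fixed-height bounds for these two terms are correct in themselves but do not suffice.

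A smaller point concerns your covering argument. The inequality you need is $\sum_j\int_{Q_j}B(f)\,d\mathcal{H}^{\beta,Q_0}_\infty\le 2\int B(f)\,d\mathcal{H}^{\beta,Q_0}_\infty$. This is a quasi-\emph{super}additivity, not a subadditivity, and it fails for merely disjoint cubes when $\beta<n$. It does hold for the packing subfamily of Lemma~\ref{packing2}, see \eqref{packingestimate}. With that correction, this part agrees with the paper's proof of Theorem~\ref{thm: weak orlicz fractional}.
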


\begin{remark}
	The restriction $\beta\in(n-\alpha,n]$ is intrinsic to the interaction between the Riesz potential $I_\alpha$ and the Hausdorff content $\mathcal{H}^\beta_\infty$. Indeed, if $\beta\le n-\alpha$, the kernel $|x|^{\alpha-n}$ is not integrable near the origin in the Choquet sense with respect to $\mathcal{H}^\beta_\infty$, and consequently $I_\alpha f$ may be infinite on sets of dimension $n-\alpha$ even for reasonable data; see \cite[p.~772]{Adams1975}. This non-integrability phenomenon in the capacitary setting was already observed in \cite{ChenClaros}; we refer the reader to \cite[Remark 1.9]{ChenClaros} for a more detailed discussion and further references.
\end{remark}

\begin{remark}
When $\beta=n$, Theorem~\ref{thm: modular weak} reduces to the endpoint modular estimate of Cruz-Uribe and Fiorenza, who proved that the corresponding inequality is sharp: if it holds with
$\Psi$ replaced by an increasing function $\Psi_0$, then necessarily $\Psi(t/\gamma)\le K\,\Psi_0(t)$ for all $t>0$ and suitable constants $\gamma,K>0$. In particular, the logarithmic growth encoded in $\Psi$ cannot be improved even in the Lebesgue case.
\end{remark}

\begin{remark}
Although throughout the paper we focus on the first-order commutator $[b,I_\alpha]$, the argument leading to Theorem~\ref{thm: key pointwise} can be modified to extend to the iterated commutators $(I_\alpha)^m_b$, $m\in\mathbb{N}$ (see Remark~\ref{rem:iteratedcommutators} for the precise definition), yielding the corresponding variants of our strong-type and endpoint estimates. We do not pursue these extensions; see Remark~\ref{rem:iteratedcommutators}.
\end{remark}

\subsection*{Outline of the paper}
The paper is organized as follows. In Section~2, we collect the necessary background on the Hausdorff content $\mathcal{H}^\beta_\infty$, Choquet integration, and the capacitary Fefferman-Stein inequality for the sharp maximal function. In Section~3, we introduce the fractional Orlicz maximal operator associated with $\mathcal{H}^\beta_\infty$ and establish the modular weak-type bounds that will be essential in the endpoint arguments. Section~4 is devoted to recalling the $L^p-L^q$ mapping properties of the Riesz potential $I_\alpha$ and related fractional maximal operators in the Hausdorff content setting, along with some auxiliary estimates. In Section~5, we prove Theorem \ref{thm: key pointwise}, which constitutes the main technical ingredient of the paper. In Section~6, we prove the characterization of $\operatorname{BMO}^\beta$ given in Theorem \ref{thm: characterization BMO commutator}. Finally, in Section~7, we prove the endpoint modular weak-type estimate given in Theorem \ref{thm: modular weak}.

\subsection*{Notation}
As usual, $C$ denotes a positive constant, possibly varying from line to line. We write $C_{\alpha, \beta, ...}$ to denote a constant depending only on $\alpha, \beta,...$. 

\section{Preliminaries and known results}

\subsection{Hausdorff content and Choquet integrals}

 In this section, we recall some basic facts concerning the Choquet integral with respect to Hausdorff content and dyadic Hausdorff content. Most of these results can be found in \cite{Chen-Spector}. For a comprehensive and well-written introduction to the subject, we also refer the reader to \cite{PS_2023}.

The Choquet integral of a non-negative function \( f \) over a set \( \Omega \subseteq \mathbb{R}^n\) with respect to an outer measure \( H \) is defined by
\begin{align*}
	\int_\Omega f \, dH := \int_0^\infty H(\{x \in \Omega : f(x) > t\}) \, dt.
\end{align*}
Let $0<\beta\le n$ and let $Q\subseteq\mathbb{R}^n$ be a cube.
We write $\mathcal{H}^{\beta,Q}_{\infty}$ for the dyadic $\beta$-dimensional Hausdorff
content relative to $Q$. More precisely, the dyadic Hausdorff content $\mathcal{H}^{\beta,Q}_{\infty}(E)$
of a set $E\subseteq\mathbb{R}^n$ is defined by
\[
\mathcal{H}^{\beta,Q}_{\infty}(E)
:=
\inf\left\{
\sum_{j} \ell(Q_j)^{\beta}
:\;
E\subseteq \bigcup_j Q_j,\;
Q_j\in\mathcal{D}(Q)
\right\},
\]
where $\mathcal{D}(Q)$ denotes the collection of all dyadic cubes generated by $Q$
and $\ell(Q_j)$ is the sidelength of $Q_j$. While the Hausdorff content $\mathcal{H}^{\beta}_{\infty}$ is not strongly subadditive if $\beta <n$, it can be shown that the dyadic Hausdorff content $\mathcal{H}^{\beta, Q}_\infty$ is strongly subadditive for any cube $Q \subseteq \mathbb{R}^d$ and thus satisfies
\begin{align*}
     \int_{\mathbb{R}^n} \sum^\infty_{j=1} f_j \;d \mathcal{H}^{\beta, Q}_\infty \leq \sum^\infty_{j=1}\int_{\mathbb{R}^n} f_j \;d \mathcal{H}^{\beta,Q}_\infty
\end{align*}
(see \cite[Proposition 3.5 and 3.6]{STW} for usual dyadic case and \cite[Proposition 2.6 and Proposition 2.10]{Chen-Spector} for general cube $Q$). Moreover, there exists a constant $C_\beta>0$, depending only on $\beta$, such that for every
cube $Q\subseteq\mathbb{R}^n$,
\begin{align}\label{equivalenceofdyadic}
	\frac{1}{C_\beta}\,\mathcal{H}^{\beta,Q}_{\infty}(E)
	\le
	\mathcal{H}^{\beta}_{\infty}(E)
	\le
	C_\beta\,\mathcal{H}^{\beta,Q}_{\infty}(E),
\end{align}
for all $E\subseteq\mathbb{R}^n$(see \cite[Proposition~2.3]{YangYuan} and
\cite[Proposition~2.11]{Chen-Spector}). One advantage of passing between the Choquet integral with respect to the Hausdorff
content and its dyadic counterpart is that, for any $c\ge0$ and any nonnegative function
$f$, one has
\begin{align}\label{averageestimate}
     \bigl| f_{Q,\beta} - c \bigr|
    \le
    \frac{1}{\mathcal{H}^{\beta,Q_0}_{\infty}(Q)}
    \int_Q |f - c| \, d\mathcal{H}^{\beta,Q_0}_{\infty},
\end{align}
where $f_{Q,\beta}$ is defined as
\begin{align*}
      f_{Q,\beta}
  :=
  \frac{1}{\mathcal{H}^{\beta,Q_0}_{\infty}(Q)}
  \int_{Q} f \, d \mathcal{H}^{\beta,Q_0}_{\infty}
\end{align*}
(see \cite[Lemma~2.3]{basak2025uncenteredfractionalmaximalfunctions}).
The next lemma records basic properties of the Choquet integral associated with Hausdorff content. It appears in \cite[p.~5]{Petteri_2023}; proofs may be found in \cite{AdamsChoquet1} and \cite[Chapter~4]{AdamsMorreySpacebook}.

\begin{lemma}\label{basicChoquet}
Let $0<\beta\leq n$ with $n \in \mathbb{N}$, and let $\Omega \subseteq \mathbb{R}^n$. Then the following statements hold:
\begin{enumerate}[label=(\roman*)]
	\item For \( a \geq 0 \) and non-negative functions \( f\) on $\mathbb{R}^n$, we have
		\begin{align*}
		\int_\Omega a \, f(x)\, d\mathcal{H}^\beta_\infty = a \int_\Omega f(x)\, d\mathcal{H}^\beta_\infty;
		\end{align*}
	\item For non-negative functions \( f_1 \) and \( f_2 \) on $\mathbb{R}^n$, we have
	\begin{align*}
 	 \int_\Omega f_1(x) + f_2(x) \, d\mathcal{H}^\beta_\infty \leq 2 \left( \int_\Omega f_1(x) \, d\mathcal{H}^\beta_\infty + \int_\Omega f_2(x) \, d\mathcal{H}^\beta_\infty \right);
	\end{align*}
	\item Let \( 1 < p<\infty , \) and let $p'$ be defined by \( \frac{1}{p} + \frac{1}{p'} = 1 \). Then for non-negative functions \( f_1 \) and \( f_2 \) on $\mathbb{R}^n$, we have
	\begin{align*}
 	\int_\Omega f_1(x) f_2(x) \, d\mathcal{H}^\beta_\infty \leq 2 \left( \int_\Omega f_1(x)^p \, d\mathcal{H}^\beta_\infty \right)^{\frac{1}{p}} \left( \int_\Omega f_2(x)^{p'} \, d\mathcal{H}^\beta_\infty \right)^{\frac{1}{p'}}.
	\end{align*}
\end{enumerate}
\end{lemma}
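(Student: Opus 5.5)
This lemma is quoted from the literature (\cite{AdamsChoquet1}, \cite[Chapter~4]{AdamsMorreySpacebook}). Still, here is how I would prove it directly from the layer-cake definition of the Choquet integral, using the dyadic content $\mathcal{H}^{\beta,Q}_\infty$ as an intermediary where subadditivity is needed.

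For (i), I use the definition directly. For $a>0$ we have $\{af>t\}=\{f>t/a\}$. Substituting $t=a\tau$ in $\int_0^\infty \mathcal{H}^\beta_\infty(\{x\in\Omega: af(x)>t\})\,dt$ gives $a\int_\Omega f\,d\mathcal{H}^\beta_\infty$. The case $a=0$ is trivial.

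For (ii), I use the inclusion $\{f_1+f_2>t\}\subseteq\{f_1>t/2\}\cup\{f_2>t/2\}$. Countable subadditivity of the outer measure $\mathcal{H}^\beta_\infty$ (immediate from the covering definition) then gives
\begin{equation*}
\mathcal{H}^\beta_\infty(\{f_1+f_2>t\})\le \mathcal{H}^\beta_\infty(\{f_1>t/2\})+\mathcal{H}^\beta_\infty(\{f_2>t/2\}).
\end{equation*}
Integrating in $t$ and substituting $t=2\tau$ yields the factor $2$. No strong subadditivity is needed, which is why the constant is $2$ rather than $1$.

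For (iii), I would first reduce by homogeneity, using (i), to the normalized case $\int_\Omega f_1^p\,d\mathcal{H}^\beta_\infty=\int_\Omega f_2^{p'}\,d\mathcal{H}^\beta_\infty=1$. If either integral vanishes, then $f_1f_2=0$ outside a set of zero content and the claim is trivial; if either is infinite, there is nothing to prove. Young's inequality gives the pointwise bound $f_1f_2\le f_1^p/p+f_2^{p'}/p'$. Monotonicity of the Choquet integral together with (ii) then gives
\begin{equation*}
\int_\Omega f_1f_2\,d\mathcal{H}^\beta_\infty\le 2\left(\frac1p+\frac1{p'}\right)=2.
\end{equation*}
Undoing the normalization gives the stated inequality.

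The only delicate point is that the constant $2$ comes from the lack of strong subadditivity of $\mathcal{H}^\beta_\infty$. This costs nothing beyond the trivial bound in (ii), so I expect no real obstacle. If one wanted constant $1$ up to the equivalence constant, one would instead pass to $\mathcal{H}^{\beta,Q}_\infty$ via \eqref{equivalenceofdyadic} and use its subadditive integral; that is unnecessary here.
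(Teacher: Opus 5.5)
Your proof is correct. The paper offers no proof of this lemma and only cites the literature; your argument is the standard one that yields exactly the stated constants. The level-set inclusion $\{f_1+f_2>t\}\subseteq\{f_1>t/2\}\cup\{f_2>t/2\}$, together with countable subadditivity of $\mathcal{H}^\beta_\infty$, gives the factor $2$ in (ii), and Young's inequality combined with (i) and (ii) after normalization gives the factor $2$ in (iii).

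One cosmetic point: the dyadic content $\mathcal{H}^{\beta,Q}_\infty$ announced in your opening sentence is never used, and that route would only give a constant of order $C_\beta^2$. You can drop that mention.
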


We next state a lemma from \cite[Lemma 2.6]{ChenClaros}, which is a minor modification of a result in \cite[Lemma 2]{OV}.

\begin{lemma}\label{packing2}
Suppose that $\{Q_j\}$ is a family of non-overlapping dyadic cubes subordinate to some dyadic lattice $\mathcal{D}(Q_0)$. Then there exists a subfamily $\{Q_{j_k}\}$ and a family of non-overlapping ancestors $\tilde{Q}_k$ such that
\begin{enumerate}
	\item 
	\begin{align*}
		\bigcup_{j} Q_j \subset \bigcup_{k} Q_{j_k} \cup \bigcup_{k} \tilde{Q}_k
	\end{align*}
	\item
	\begin{align*}
		\sum_{Q_{j_k} \subset Q}\ell(Q_{j_k})^\beta \leq 2\ell (Q)^\beta, \text{ for each dyadic cube } Q.
	\end{align*}
	\item
	\begin{align*}
  	\mathcal{H}^{\beta, Q_0}_\infty(\cup_j Q_j) & \leq  \sum_{k, Q_{j_k} \not \subseteq \tilde{Q}_m} \ell(Q_{j_k})^\beta + \sum_k \ell(\tilde{Q}_k)^\beta \\ 
    &\leq \sum_k \ell(Q_{j_k})^\beta \\
    & \leq 2 \mathcal{H}^{\beta, Q_0}_\infty (\cup_j Q_j).
 	\end{align*}
\end{enumerate}
\end{lemma}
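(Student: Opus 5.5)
The statement to prove is Lemma \ref{packing2}, a packing lemma for non-overlapping dyadic cubes in $\mathcal{D}(Q_0)$. I would follow the stopping-time construction of \cite[Lemma 2]{OV}. The idea is to call a dyadic cube $Q$ \emph{heavy} if the cubes of the family it contains carry too much $\beta$-dimensional mass, namely
\[
\sum_{Q_j\subseteq Q}\ell(Q_j)^\beta > \ell(Q)^\beta .
\]
We then let $\{\tilde Q_k\}$ be the maximal heavy cubes. To ensure maximal elements exist, I would first reduce to finitely many $Q_j$, or to cubes of sidelength at most $\ell(Q_0)$, and then pass to the limit by monotone convergence of the content. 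Being maximal, the $\tilde Q_k$ are non-overlapping. I would then define the subfamily $\{Q_{j_k}\}$ as those $Q_j$ not contained in any $\tilde Q_k$. Each $Q_j$ either survives or lies in some $\tilde Q_k$, which gives covering property (1).

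For (2), fix a dyadic cube $Q$. Two cases arise.
\begin{itemize}
\item If $Q$ is not heavy, then
\[
\sum_{Q_{j_k}\subseteq Q}\ell(Q_{j_k})^\beta\le \sum_{Q_j\subseteq Q}\ell(Q_j)^\beta\le \ell(Q)^\beta .
\]
\item If $Q$ is heavy, then $Q$ lies inside some maximal $\tilde Q_m$. Every $Q_{j_k}\subseteq Q$ would then lie in $\tilde Q_m$, which contradicts its selection. So the sum is empty.
\end{itemize}
A subtlety is that a selected $Q_{j_k}$ could itself be heavy when $Q=Q_{j_k}$, since the sum includes the cube itself. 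This is exactly why the constant $2$ appears. I would define heaviness using only the cubes strictly contained in $Q$, or tolerate the single extra term $\ell(Q)^\beta$. Either way, the sum is bounded by $2\ell(Q)^\beta$.

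For (3), the first inequality holds because the surviving $Q_{j_k}$ together with the $\tilde Q_k$ form a cover of $\cup_j Q_j$ by cubes in $\mathcal{D}(Q_0)$, so the definition of $\mathcal{H}^{\beta,Q_0}_\infty$ applies. The second inequality follows from heaviness. For each $k$,
\[
\ell(\tilde Q_k)^\beta<\sum_{Q_j\subseteq\tilde Q_k}\ell(Q_j)^\beta .
\]
Consequently, the combined sum is dominated by the total mass $\sum_j \ell(Q_j)^\beta$. I expect the bookkeeping here to be the main obstacle. As literally stated, the middle quantity is indexed by the subfamily, so I would interpret the index set to include the cubes inside the $\tilde Q_k$. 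Following \cite[Lemma 2.6]{ChenClaros}, I would then check that the stated chain holds with that convention.

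For the last inequality, take any cover $\{R_i\}\subseteq\mathcal{D}(Q_0)$ of $\cup_j Q_j$ that is almost optimal. We may assume the $R_i$ are maximal, hence disjoint, and that each $R_i$ either contains or is contained in the $Q_j$ it meets. If $R_i$ is contained in some $Q_{j_k}$, we may replace it by $Q_{j_k}$ at no cost up to the covering count. Otherwise $R_i$ contains selected cubes, and (2) gives
\[
\sum_{Q_{j_k}\subseteq R_i}\ell(Q_{j_k})^\beta\le 2\ell(R_i)^\beta .
\]
Summing over $i$ and letting the cover approach optimality yields $\sum_k\ell(Q_{j_k})^\beta\le 2\mathcal{H}^{\beta,Q_0}_\infty(\cup_jQ_j)$.
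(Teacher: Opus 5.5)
Your subfamily is too small. The paper does not reprove this lemma; it quotes \cite[Lemma 2.6]{ChenClaros}. So I measured your argument against what the statement requires, and there is a genuine gap. The content of the lemma is that one and the same subfamily satisfies the packing condition (2) \emph{and} the lower bound $\mathcal{H}^{\beta,Q_0}_\infty(\bigcup_j Q_j)\le\sum_k\ell(Q_{j_k})^\beta$. This is exactly how the lemma is used in the proof of Theorem~\ref{thm: weak orlicz fractional}.

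Discarding every $Q_j$ that lies in a heavy cube destroys this lower bound. Take $\beta<n$ and a dyadic cube $P$, and let $\{Q_j\}$ be the $2^{nm}$ dyadic subcubes of $P$ of side $2^{-m}\ell(P)$. Then $\sum_{Q_j\subseteq P}\ell(Q_j)^\beta=2^{m(n-\beta)}\ell(P)^\beta>\ell(P)^\beta$. So every $Q_j$ is discarded and your subfamily is empty, whereas $\mathcal{H}^{\beta,Q_0}_\infty(\bigcup_jQ_j)=\ell(P)^\beta>0$.

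Reinterpreting the middle sum as a sum over all $Q_j$ does not help. The last inequality would then read $2^{m(n-\beta)}\ell(P)^\beta\le 2\ell(P)^\beta$, which is false for large $m$. Moreover, your proof of that inequality invokes (2), which holds only for the survivors. A symptom of the problem: since the family is non-overlapping, no single $Q_j$ is heavy under your definition. Your survivors therefore satisfy packing with constant $1$; the factor $2$ in the lemma comes from a selection step your construction does not contain. Separately, for infinite families, monotone convergence of the content alone does not produce a subfamily, since selections for different truncations need not be nested.

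Selecting some cubes inside your $\tilde Q_k$ cannot repair this, because your stopping cubes are also too large. In the example, the ancestor of $P$ of side $2^r\ell(P)$ is still heavy whenever $r\beta<m(n-\beta)$. Taking $P$ small enough that these ancestors lie in $\mathcal{D}(Q_0)$, your maximal heavy cube satisfies $\ell(\tilde Q)^\beta\ge 2^{m(n-\beta)-\beta}\ell(P)^\beta$. But any subfamily obeying (2) at the cube $P$ has total mass at most $2\ell(P)^\beta$. Hence $\sum_k\ell(\tilde Q_k)^\beta\le\sum_k\ell(Q_{j_k})^\beta$ fails for every admissible choice.

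Heaviness has to be tested against the content $h(Q):=\mathcal{H}^{\beta,Q_0}_\infty(Q\cap\bigcup_jQ_j)$ rather than the raw sum. This amounts to a bottom-up construction. For a finite family:
\begin{itemize}
\item $h(Q_j)=\ell(Q_j)^\beta$, and $h(Q)=0$ if $Q$ misses $\bigcup_jQ_j$;
\item $h(Q)=\min\{\ell(Q)^\beta,\sum_{Q'}h(Q')\}$, over the children $Q'$, if $Q$ strictly contains some $Q_j$.
\end{itemize}
Set $S(Q_j)=\{Q_j\}$, and $S(Q)=\emptyset$ if $Q$ misses the union. If $\sum_{Q'}h(Q')\le\ell(Q)^\beta$, put $S(Q)=\bigcup_{Q'}S(Q')$. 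Otherwise, add cubes of $\bigcup_{Q'}S(Q')$ one at a time until their total first reaches $\ell(Q)^\beta$. This is possible since that total is at least $\sum_{Q'}h(Q')>\ell(Q)^\beta$, and it overshoots by less than $2^{-\beta}\ell(Q)^\beta$.

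Inductively, $h(Q)\le\sum_{R\in S(Q)}\ell(R)^\beta\le 2h(Q)\le2\ell(Q)^\beta$. The cubes of the final selection lying in any dyadic $P$ belong to $S(P)$, which gives (2). Taking $\tilde Q_k$ to be the maximal cubes where the second alternative occurs yields (1) and (3). In the example, this stops at $\tilde Q=P$ and keeps mass in $[\ell(P)^\beta,2\ell(P)^\beta)$.
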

\begin{remark}
Condition~(2) in Lemma~\ref{packing2} is usually referred to as a \emph{packing condition}.
In particular, let $\{Q_k\}_k \subset \mathcal{D}(Q_0)$ be a collection of dyadic cubes such that
\[
\sum_{Q_k \subset Q} \ell(Q_k)^\beta \le 2\,\ell(Q)^\beta
\qquad
\text{for every dyadic cube } Q.
\]
Then, for every $f\ge 0$, one has
\begin{align}\label{packingestimate}
\sum_k \int_{Q_k} f \, d\mathcal{H}^{\beta,Q_0}_\infty
\le
2 \int_{\bigcup_k Q_k} f \, d\mathcal{H}^{\beta,Q_0}_\infty .
\end{align}
\end{remark}

The next lemma is taken from \cite[Lemma 2.2]{chen2023capacitary} and \cite[Proposition 2.3]{harjulehto2023sobolev}, extending \cite[Lemma 3]{OV}.

\begin{lemma}\label{alphabetachange}
Let $f \geq 0$ and $0 < \alpha \leq \beta \leq n \in \mathbb{N}$. Then
\begin{align*}
    \int_{\mathbb{R}^n} f \, d\mathcal{H}^\beta_\infty \leq \frac{\beta }{\alpha} \left( \int_{\mathbb{R}^n} f^{\frac{\alpha}{\beta}} \, d\mathcal{H}^\alpha_\infty \right)^{\frac{\beta}{\alpha}}.
\end{align*}
\end{lemma}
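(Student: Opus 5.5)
The idea is to reduce the inequality to a set-function comparison between the two contents, and then integrate it using the layer-cake formula together with the monotonicity of distribution functions. Write $p:=\beta/\alpha\ge 1$ and $g:=f^{\alpha/\beta}$, so that $f=g^p$. The claim becomes
\begin{equation*}
\int_{\mathbb{R}^n} g^p \, d\mathcal{H}^\beta_\infty \le p \left( \int_{\mathbb{R}^n} g \, d\mathcal{H}^\alpha_\infty\right)^{p}.
\end{equation*}
We may assume $A:=\int_{\mathbb{R}^n} g\, d\mathcal{H}^\alpha_\infty<\infty$, since otherwise there is nothing to prove.

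\emph{Step 1: comparison of contents.} We first show $\mathcal{H}^\beta_\infty(E)\le \mathcal{H}^\alpha_\infty(E)^{p}$ for every $E\subseteq\mathbb{R}^n$. Take any covering $E\subset\bigcup_i B(x_i,r_i)$. Since $p\ge1$, the $\ell^p$ norm is dominated by the $\ell^1$ norm, so
\begin{equation*}
\sum_i \omega_\beta r_i^\beta \le \frac{\omega_\beta}{\omega_\alpha^{p}} \sum_i (\omega_\alpha r_i^\alpha)^{p} \le \frac{\omega_\beta}{\omega_\alpha^{p}}\Bigl(\sum_i \omega_\alpha r_i^\alpha\Bigr)^{p}.
\end{equation*}
Taking the infimum over coverings gives the comparison, up to the factor $\omega_\beta/\omega_\alpha^{\beta/\alpha}$. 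I expect this normalization factor to be the main obstacle. One has to verify that $\omega_\beta\le\omega_\alpha^{\beta/\alpha}$; for instance $\alpha=1,\beta=2$ gives $\pi\le 4$. This should follow from a log-convexity property of $t\mapsto \pi^{t/2}/\Gamma(t/2+1)$. If it fails in some range, the conclusion still holds with a constant depending on $\alpha,\beta$, and the exact constant $\beta/\alpha$ would then rely on the normalization used in the cited sources.

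\emph{Step 2: distribution function bound.} Set $\varphi(s):=\mathcal{H}^\alpha_\infty(\{g>s\})$, which is nonincreasing in $s$. Then
\begin{equation*}
s\,\varphi(s)\le\int_0^s\varphi(\tau)\,d\tau\le A,
\end{equation*}
so $\varphi(s)\le A/s$. Combining this with Step 1,
\begin{equation*}
\mathcal{H}^\beta_\infty(\{g>s\})\le \varphi(s)^{p}=\varphi(s)\,\varphi(s)^{p-1}\le \varphi(s)\,(A/s)^{p-1}.
\end{equation*}
Keeping one full factor $\varphi(s)$ here is what makes the final integral finite.

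\emph{Step 3: integrate.} By the definition of the Choquet integral and the change of variables $t=s^p$,
\begin{equation*}
\int g^p\,d\mathcal{H}^\beta_\infty=\int_0^\infty p s^{p-1}\,\mathcal{H}^\beta_\infty(\{g>s\})\,ds \le p A^{p-1}\int_0^\infty\varphi(s)\,ds = pA^{p}.
\end{equation*}
This is exactly the claimed inequality with constant $\beta/\alpha$.
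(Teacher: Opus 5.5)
Your argument is correct and matches the standard proof behind the results the paper cites for this lemma; the paper itself gives no proof. That proof is a setwise comparison $\mathcal{H}^\beta_\infty(E)\le\mathcal{H}^\alpha_\infty(E)^{\beta/\alpha}$, followed by the layer-cake formula and the weak-type bound $s\,\mathcal{H}^\alpha_\infty(\{g>s\})\le A$.

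The normalization step you left open does hold for all $0<\alpha\le\beta$, but the property needed is log-\emph{concavity} of $t\mapsto\omega_t$, not log-convexity. Indeed, $h(t):=\log\omega_t=\frac{t}{2}\log\pi-\log\Gamma(\frac{t}{2}+1)$ is concave because $\log\Gamma$ is convex, and $h(0)=0$. Hence $h(t)/t$ is nonincreasing, which gives $\omega_\beta\le\omega_\alpha^{\beta/\alpha}$. So Step 1 holds with constant $1$, and you obtain exactly the constant $\beta/\alpha$.
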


We will use the following $\beta$-dimensional Fefferman-Stein inequality established in \cite[Theorem 1.1]{ChenClaros}.

\begin{theorem}\label{thm: beta FS}
    Let $0 <  \beta  \leq n \in \mathbb{N}$, $0<p_0<\infty$ and $f$ locally integrable with respect to $\mathcal{H}^{\beta}_\infty$  such that \begin{equation}\label{newassumptionThm1.2}
    \sup_{0< t \le N} t^{p_0} \mathcal{H}^{\beta}_\infty ( \{ x\in \R^n : \mathcal{M}_{\mathcal{H}^\beta_\infty} f (x)>t\}) <\infty \qquad \text{ for all } N>0.
\end{equation}
    Then there exists a constant $C = C(p,n,\beta)>0$ such that 
   \begin{align*}
 \left(  \int_{\mathbb{R}^n} \left( \mathcal{M}_{\mathcal{H}^\beta_\infty} f  \right)^p \; d\mathcal{H}^\beta_\infty \right)^{\frac{1}{p}} \le C \left( \int_{\mathbb{R}^n} \left( \mathcal{M}^{\#} _\beta f \right)^p \; d\mathcal{H}^\beta_\infty \right)^{\frac{1}{p}}
   \end{align*}
   for all $p_0<p<\infty$.
\end{theorem}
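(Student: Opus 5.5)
The plan is the classical good-$\lambda$ argument of Fefferman and Stein, carried out with dyadic Hausdorff contents. The step I expect to be hardest is the covering in the second paragraph, where the ancestor cubes of Lemma~\ref{packing2} must be controlled.

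\textbf{Dyadic reduction.} By \eqref{equivalenceofdyadic} and the usual one-third trick with finitely many translated lattices $\mathcal{D}(Q_0^{(i)})$, $i=1,\dots,3^n$, it suffices to replace $\mathcal{M}_{\mathcal{H}^\beta_\infty}$ by a dyadic maximal operator
\[
\mathcal{M}^{d}f(x)=\sup_{x\in Q\in\mathcal{D}(Q_0)}|f|_{Q,\beta}
\]
in each lattice. The constants depend only on $n$ and $\beta$. The sharp function only becomes larger under this replacement, up to constants.

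\textbf{Good-$\lambda$ inequality.} The core step is to show that for some $C=C(n,\beta)$ and every small $\gamma>0$,
\begin{equation*}
\mathcal{H}^{\beta,Q_0}_\infty\bigl(\{\mathcal{M}^d f>2t,\ \mathcal{M}^\#_\beta f\le \gamma t\}\bigr)\le C\gamma\,\mathcal{H}^{\beta,Q_0}_\infty\bigl(\{\mathcal{M}^d f>t\}\bigr).
\end{equation*}

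1. Decompose $\{\mathcal{M}^d f>t\}$ into maximal dyadic cubes $Q_j$. Maximality is where the hypothesis \eqref{newassumptionThm1.2} enters: it guarantees the level set has finite content, so maximal cubes exist.
2. The parent of each $Q_j$ has average at most $t$. Using \eqref{averageestimate}, if $Q_j$ contains a point with $\mathcal{M}^\#_\beta f\le\gamma t$, then for $x\in Q_j$ with $\mathcal{M}^df(x)>2t$ we get $\mathcal{M}^d\bigl((f-c_j)\chi_{Q_j}\bigr)(x)\gtrsim t$. Here $c_j$ is a near-minimising constant for the parent cube.
3. The weak-type $(1,1)$ bound for the capacitary maximal operator (from \cite{chen2023capacitary}, applied to the dyadic content) then gives
\[
\mathcal{H}^{\beta,Q_0}_\infty(\text{bad part of }Q_j)\le \frac{C}{t}\int_{Q_j}|f-c_j|\,d\mathcal{H}^{\beta,Q_0}_\infty\le C\gamma\,\ell(Q_j)^\beta .
\]
4. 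Since $\mathcal{H}^\beta_\infty$ is not additive, we cannot simply sum over $j$. Instead, apply Lemma~\ref{packing2} to $\{Q_j\}$. This yields a packing subfamily $\{Q_{j_k}\}$ and ancestors $\tilde Q_k$ covering $\bigcup_j Q_j$. For these, $\sum_k\ell(Q_{j_k})^\beta\le 2\mathcal{H}^{\beta,Q_0}_\infty(\bigcup_jQ_j)$, and the total $\sum_k\ell(\tilde Q_k)^\beta$ is controlled by the same quantity.

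\textbf{Main obstacle: the ancestors.} The ancestors $\tilde Q_k$ need not lie inside the level set, so their whole content is not small. The first thing I would try is to run the local estimate of step 3 on each $\tilde Q_k$ as well. Concretely, I would show that on $\tilde Q_k$ the bad set is contained in $\{\mathcal{M}^d((f-c_k)\chi_{\tilde Q_k})\gtrsim t\}$, using that the dyadic children of $\tilde Q_k$ meeting $\bigcup Q_j$ have controlled averages. Then the weak-type bound gives $C\gamma\,\ell(\tilde Q_k)^\beta$. If that fails, I would replace the ancestors by the Whitney-type structure of $\{\mathcal{M}^df>t\}$ and use the packing estimate \eqref{packingestimate} directly for the Choquet integrals $\int_{Q_{j_k}}|f-c|\,d\mathcal{H}^{\beta,Q_0}_\infty$. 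Either way, countable subadditivity together with the packing bound yields the good-$\lambda$ inequality.

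\textbf{Integration.} Write the Choquet integral in layer-cake form,
\[
\int(\mathcal{M}^df)^p\,d\mathcal{H}^{\beta,Q_0}_\infty=p\int_0^\infty t^{p-1}\mathcal{H}^{\beta,Q_0}_\infty(\{\mathcal{M}^df>t\})\,dt .
\]
Truncate at $t\le N$, which gives the quantity $I_N:=p\int_0^N t^{p-1}\mathcal{H}^{\beta,Q_0}_\infty(\{\mathcal{M}^df>t\})\,dt$. Split the level set $\{\mathcal{M}^df>2t\}$ by the good-$\lambda$ inequality and subadditivity, then change variables. This produces
\[
I_{N}\le 2^pC\gamma\, I_{N/2}+C_\gamma\int(\mathcal{M}^\#_\beta f)^p\,d\mathcal{H}^\beta_\infty .
\]
For $p>p_0$, hypothesis \eqref{newassumptionThm1.2} makes $I_N$ finite, since $t^{p-1}\mathcal{H}^\beta_\infty(\{\cdot>t\})\lesssim t^{p-1-p_0}$ is integrable near $0$. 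Choosing $\gamma$ with $2^pC\gamma<1/2$ lets us absorb the first term. Letting $N\to\infty$ and returning to $\mathcal{H}^\beta_\infty$ via \eqref{equivalenceofdyadic} completes the proof.
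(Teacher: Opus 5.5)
Your plan is correct and is essentially the argument of \cite{ChenClaros} that the paper only quotes here. That argument is the dyadic good-$\lambda$ inequality of \cite[Theorem 3.1]{ChenClaros}, used in the proof of \eqref{weak orlicz FS}, obtained from maximal cubes, Lemma~\ref{packing2} and the weak $(1,1)$ bound. It is followed by truncation at $N$ and absorption via \eqref{newassumptionThm1.2} for $p>p_0$.

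Your first idea for the ancestors does work, but the children's averages are not the operative fact. What matters is that each $\tilde Q_k$ strictly contains a maximal cube, so $\tilde Q_k$ and all its dyadic ancestors have $\beta$-average at most $t$. This forces the cubes realising $\mathcal{M}^d f(x)>2t$ to lie inside $\tilde Q_k$, and it gives $|c_k|\le(1+C\gamma)t$ by subadditivity of the dyadic Choquet integral, so the fallback is unnecessary.
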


Let \( 0 < \beta \le n \) and let \( Q_0 \subseteq \mathbb{R}^n \) be a cube.
The dyadic maximal operator associated with the dyadic Hausdorff content
\( \mathcal{H}^{\beta,Q_0}_\infty \), introduced in \cite{Chen-Spector},
is defined by
\[
 \mathcal{M}_{\mathcal{H}^{\beta,Q_0}_\infty} f (x)
 :=
 \sup_{x\in Q }
 \frac{1}{\ell(Q)^\beta}
 \int_Q |f| \, d\mathcal{H}^{\beta,Q_0}_\infty,
\]
where the supremum is taken over all dyadic cubes
\( Q \in \mathcal{D}(Q_0) \) containing \( x \).
The corresponding weak-type estimate was also established in
\cite{Chen-Spector}. To prove the modular weak point inequality stated in Theorem \ref{thm: modular weak}, we require the following modification of the previous result.

\begin{lemma}
	Let $\varphi : (0,\infty) \longrightarrow (0,\infty)$ be a doubling function, that is, there exists $C_\varphi>0$ such that $\varphi(2t)\le C_\varphi \varphi(t)$ for every $t>0$. Then, there exists a constant $C=C(\beta, \varphi)>0$ such that 
	\begin{equation}\label{weak orlicz FS}
		\sup_{\lambda>0} \varphi(\lambda) \mathcal{H}_\infty^{\beta} (\{ x :  \mathcal{M}_{\mathcal{H}_\infty^{\beta, Q_0}} f(x)>\lambda \}) \le C \sup_{\lambda>0} \varphi(\lambda) \mathcal{H}_\infty^{\beta} (\{ x  :  \mathcal{M}_{\beta}^\# f(x)>\lambda \})	\end{equation}
	for all functions $f$ such that the left-hand side is finite. 
\end{lemma}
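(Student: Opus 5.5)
We will follow the good-$\lambda$ argument behind the proof of Theorem~\ref{thm: beta FS} in \cite{ChenClaros}, but work with level sets rather than integrating over $\lambda$. The key intermediate bound is a good-$\lambda$ inequality: there are constants $C_0=C_0(\beta)$ and $A=2^{\beta+1}$ such that for every $\lambda>0$ and every $0<\gamma<1$,
\begin{equation*}
\mathcal{H}^{\beta}_\infty\bigl(\{\mathcal{M}_{\mathcal{H}^{\beta,Q_0}_\infty} f>A\lambda,\ \mathcal{M}^\#_\beta f\le\gamma\lambda\}\bigr)
\le C_0\,\gamma\,\mathcal{H}^{\beta}_\infty\bigl(\{\mathcal{M}_{\mathcal{H}^{\beta,Q_0}_\infty} f>\lambda\}\bigr).
\end{equation*}

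To prove it, we decompose the open set $\Omega_\lambda=\{\mathcal{M}_{\mathcal{H}^{\beta,Q_0}_\infty} f>\lambda\}$ into maximal dyadic cubes $Q_j\in\mathcal{D}(Q_0)$ with $\ell(Q_j)^{-\beta}\int_{Q_j}|f|\,d\mathcal{H}^{\beta,Q_0}_\infty>\lambda$. Maximality is available once the left-hand side of \eqref{weak orlicz FS} is finite, and otherwise we truncate. By maximality, the dyadic parent of each $Q_j$ has average at most $\lambda$. Hence on $Q_j$, any point of $\{\mathcal{M}_{\mathcal{H}^{\beta,Q_0}_\infty} f>A\lambda\}$ lies in $\{\mathcal{M}_{\mathcal{H}^{\beta,Q_0}_\infty}(f\chi_{Q_j})>A\lambda/2\}$, after choosing $A$ large.

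Next, if $Q_j$ contains a point where $\mathcal{M}^\#_\beta f\le\gamma\lambda$, pick $c_j$ nearly attaining the infimum. Together with \eqref{averageestimate}, this controls $|c_j|\lesssim\lambda$, so the set in question is contained in $\{\mathcal{M}_{\mathcal{H}^{\beta,Q_0}_\infty}((f-c_j)\chi_{Q_j})>\lambda\}$. The weak-type $(1,1)$ bound of \cite{Chen-Spector} for the dyadic maximal operator, combined with \eqref{equivalenceofdyadic}, then bounds its content by $C\lambda^{-1}\int_{Q_j}|f-c_j|\,d\mathcal{H}^{\beta}_\infty\le C\gamma\,\ell(Q_j)^\beta$.

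Summing over $j$ is the main obstacle, because $\mathcal{H}^\beta_\infty$ is only countably subadditive: we must show $\sum_j\ell(Q_j)^\beta\lesssim\mathcal{H}^\beta_\infty(\Omega_\lambda)$. Here we invoke Lemma~\ref{packing2}. We replace $\{Q_j\}$ by the subfamily $\{Q_{j_k}\}$ together with the ancestors $\tilde Q_k$. The ancestors are handled the same way, since their averages are also controlled by the packing condition and \eqref{packingestimate}. Property (3) of Lemma~\ref{packing2} then gives $\sum_k\ell(Q_{j_k})^\beta\le2\mathcal{H}^{\beta,Q_0}_\infty(\Omega_\lambda)\le C\mathcal{H}^\beta_\infty(\Omega_\lambda)$. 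This is exactly the step carried out in \cite{ChenClaros}, and we reuse it verbatim.

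Given the good-$\lambda$ inequality, we conclude as follows. For each $\lambda$, multiply by $\varphi(A\lambda)\le C_\varphi^{k}\varphi(\lambda)$, where $2^k\ge A$:
\begin{equation*}
\varphi(A\lambda)\mathcal{H}^\beta_\infty(\{\mathcal{M}_{\mathcal{H}^{\beta,Q_0}_\infty} f>A\lambda\})
\le C_\varphi^k C_0\gamma\,\varphi(\lambda)\mathcal{H}^\beta_\infty(\{\mathcal{M}_{\mathcal{H}^{\beta,Q_0}_\infty} f>\lambda\})+\varphi(A\lambda)\mathcal{H}^\beta_\infty(\{\mathcal{M}^\#_\beta f>\gamma\lambda\}).
\end{equation*}
Doubling also gives $\varphi(A\lambda)\le C\varphi(\gamma\lambda)$ with $C=C(\gamma,\varphi)$. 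Taking suprema over $\lambda$ yields $S\le C_\varphi^kC_0\gamma S+C_{\gamma}\sup_\lambda\varphi(\lambda)\mathcal{H}^\beta_\infty(\{\mathcal{M}^\#_\beta f>\lambda\})$, where $S$ denotes the left-hand side of \eqref{weak orlicz FS}. Choosing $\gamma$ with $C_\varphi^kC_0\gamma\le1/2$ and using $S<\infty$, we absorb the first term and obtain \eqref{weak orlicz FS}.
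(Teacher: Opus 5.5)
Your proposal is correct and follows the paper's route. The paper quotes the good-$\lambda$ inequality directly from \cite[Theorem 3.1]{ChenClaros}, in the normalized form with $c_\beta=2^{-\beta-2}$ and parameter $\varepsilon$, and then carries out exactly your doubling-and-absorption step using finiteness of the left-hand side. One small correction to your sketch of the good-$\lambda$ inequality: the ancestors $\tilde Q_k$ from Lemma~\ref{packing2} have average at most $\lambda$ because they strictly contain maximal cubes of $\Omega_\lambda$, not because of the packing condition or \eqref{packingestimate}.
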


The proof adapts the arguments of \cite[Lemma 2.1]{Perez}, relying on the good-lambda inequality established in \cite{ChenClaros}. We include the details for the reader's convenience. 

\begin{proof}
	In view of the equivalence \eqref{equivalenceofdyadic}, it suffices to establish \eqref{weak orlicz FS} for the dyadic Hausdorff content $\mathcal{H}_\infty^{\beta, Q_0}$. By the good-lambda inequality proved in \cite[Theorem 3.1]{ChenClaros}, there exists a constant $C_\beta>0$ such that 
	\begin{align*}
		\mathcal{H}_\infty^{\beta, Q_0} (\{ x\in \R^n :  \mathcal{M}_{\mathcal{H}_\infty^{\beta, Q_0}} f(x)>\lambda \}) \le & \mathcal{H}_\infty^{\beta,Q_0} (\{ x\in \R^n :  \mathcal{M}_{\beta}^\# f(x)> \varepsilon \lambda \})\\
		& + \varepsilon  \, C_\beta  \mathcal{H}_\infty^{\beta, Q_0} (\{ x\in \R^n :  \mathcal{M}_{\mathcal{H}_\infty^{\beta, Q_0}} f(x)>c_\beta \lambda \})
	\end{align*}
	for all $\lambda>0$ and $\varepsilon>0$, where $c_\beta = \frac{1}{2^{\beta+2}}$. Using the doubling property of $\varphi$, there exists a constant $C_1=C_1(\beta, \varphi)>0$ such that $\varphi(\lambda)= \varphi(2^{\beta+2} c_\beta \lambda) \le C_1 \varphi(c_\beta \lambda) $ for all $\lambda>0$. We choose $\varepsilon>0$ sufficiently small so that $\varepsilon C_\beta C_1 = \frac{1}{2}$. Invoking the doubling property again, there exists a constant $C_2= C_2(\beta, \varphi)>0$ such that $\varphi(\lambda)= \varphi(\frac{1}{\varepsilon} \varepsilon \lambda) \le C_2 \varphi(\varepsilon \lambda) $. Consequently, we have
	\begin{align*}
		 \varphi(\lambda) \mathcal{H}_\infty^{\beta, Q_0} (\{ x :  \mathcal{M}_{\mathcal{H}_\infty^{\beta, Q_0}} f(x)>\lambda \}) \le & C_2\varphi(\varepsilon \lambda) \mathcal{H}_\infty^{\beta, Q_0} (\{ x :  \mathcal{M}_{\beta}^\# f(x)> \varepsilon \lambda \})\\
		& + \frac{1}{2} \varphi(c_\beta \lambda) \mathcal{H}_\infty^{\beta, Q_0} (\{ x :  \mathcal{M}_{\mathcal{H}_\infty^{\beta, Q_0}} f(x)>c_\beta \lambda \})\\
		\le & C_2 \sup_{\lambda>0} \varphi(\varepsilon \lambda) \mathcal{H}_\infty^{\beta, Q_0} (\{ x :  \mathcal{M}_{\beta}^\# f(x)> \varepsilon \lambda \})\\
		& + \frac{1}{2} \sup_{\lambda>0} \varphi(c_\beta \lambda) \mathcal{H}_\infty^{\beta, Q_0} (\{ x :  \mathcal{M}_{\mathcal{H}_\infty^{\beta, Q_0}} f(x)>c_\beta \lambda \})\\
		= & C_2 \sup_{\lambda>0} \varphi( \lambda) \mathcal{H}_\infty^{\beta, Q_0} (\{ x :  \mathcal{M}_{\beta}^\# f(x)> \lambda \})\\
		& + \frac{1}{2} \sup_{\lambda>0} \varphi( \lambda) \mathcal{H}_\infty^{\beta, Q_0} (\{ x :  \mathcal{M}_{\mathcal{H}_\infty^{\beta, Q_0}} f(x)> \lambda \}).
	\end{align*}
	Taking the supremum over $\lambda>0$ on the left-hand side and assuming it is finite, we may absorb the second term on the right-hand side to conclude the proof of \eqref{weak orlicz FS} with $C=2C_2$.
\end{proof}

\subsection{On $\beta$-dimensional bounded mean oscillation spaces}
For a function $b\in \operatorname{BMO}^\beta(\R^n)$ and any cube $Q$, we define the quantity
\begin{equation*}
	b_Q = \argmin_{c\in \R} \frac{1}{\ell(Q)^\beta} \int_Q |b-c| d\mathcal{H}_\infty^\beta.
\end{equation*}

We observe that, for every $b\in \operatorname{BMO}^\beta(\mathbb{R}^n)$ and every cube $Q$, the infimum in the definition of $\|b\|_{ \operatorname{BMO}^\beta(\R^n) }$ is always attained; that is, the previous $\argmin$ exists. See \cite[p. 983]{Chen-Spector} for a detailed discussion.

\begin{lemma}\label{bmo cQ}
	Let $0<\beta\le n \in \mathbb{N}$ and $b\in \operatorname{BMO}^\beta(\R^n)$. Then, there exist $C_\beta>0$ such that
	\begin{equation*}
		|b_{2^kQ} - b_Q| \le C_\beta k \|b\|_{ \operatorname{BMO}^\beta(\R^n) }
	\end{equation*}
	for each cube $Q$ and each $k\in \mathbb{N}$. 
\end{lemma}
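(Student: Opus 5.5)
The plan is to telescope along the dyadic chain $Q, 2Q, \dots, 2^kQ$:
\[
|b_{2^kQ}-b_Q| \le \sum_{j=1}^{k} |b_{2^jQ}-b_{2^{j-1}Q}| .
\]
It then suffices to prove the single-step bound $|b_{2P}-b_P|\le C_\beta \|b\|_{\operatorname{BMO}^\beta(\R^n)}$ for an arbitrary cube $P$, applied with $P=2^{j-1}Q$.

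To prove the single-step bound, write $c=|b_{2P}-b_P|$, a constant. Since $P\subset 2P$, we have the pointwise inequality $c\le |b-b_{2P}|+|b-b_P|$ on $P$. Integrating over $P$ against $\mathcal{H}^\beta_\infty$ and using Lemma \ref{basicChoquet}(i),(ii) gives
\[
c\,\mathcal{H}^\beta_\infty(P) \le 2\left(\int_{2P}|b-b_{2P}|\,d\mathcal{H}^\beta_\infty + \int_P |b-b_P|\,d\mathcal{H}^\beta_\infty\right).
\]
Here we used the monotonicity of the Choquet integral in the domain.

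By the definition of $b_P$ and $b_{2P}$ as minimizers, the right-hand side is at most
\[
2\|b\|_{\operatorname{BMO}^\beta(\R^n)}\bigl(\ell(2P)^\beta+\ell(P)^\beta\bigr)=2(2^\beta+1)\,\ell(P)^\beta\,\|b\|_{\operatorname{BMO}^\beta(\R^n)} .
\]

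The only remaining input is the lower bound $\mathcal{H}^\beta_\infty(P)\ge c_{n,\beta}\,\ell(P)^\beta$. This is standard, and it is the one step that needs care. For $\beta=n$ it follows from Lebesgue measure. For $\beta<n$ it follows from the dyadic comparison \eqref{equivalenceofdyadic} with $Q_0=P$: any cover of $P$ by cubes in $\mathcal{D}(P)$ either uses $P$ itself, or its cubes of side at most $\ell(P)$ satisfy $\sum \ell(Q_j)^\beta\ge\sum\ell(Q_j)^n\ell(P)^{\beta-n}\ge \ell(P)^\beta$. Hence $\mathcal{H}^{\beta,P}_\infty(P)=\ell(P)^\beta$, and therefore $\mathcal{H}^\beta_\infty(P)\ge C_\beta^{-1}\ell(P)^\beta$.

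Dividing through yields $c\le 2(2^\beta+1)C_\beta\|b\|_{\operatorname{BMO}^\beta(\R^n)}$. Summing the $k$ steps then gives the lemma, with a constant depending only on $\beta$ (and $n$ through the covering constants).
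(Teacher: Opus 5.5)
Your proposal is correct and follows the paper's proof: telescoping along $Q,2Q,\dots,2^kQ$, with the one-step bound obtained from the triangle inequality on $Q$, monotonicity of the Choquet integral in the domain, and the comparability $\mathcal{H}^\beta_\infty(Q)\simeq \ell(Q)^\beta$. Your dyadic-content check of the lower bound $\mathcal{H}^\beta_\infty(P)\gtrsim \ell(P)^\beta$ supplies a step the paper uses without comment when it writes $|b_{2Q}-b_Q| = \frac{C_\beta}{\ell(Q)^\beta}\int_Q |b_{2Q}-b_Q|\,d\mathcal{H}^\beta_\infty$.
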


\begin{proof}
	Firstly, we observe that
	\begin{align*}
		|b_{2Q}-b_Q| = & \frac{C_\beta }{\ell(Q)^\beta} \int_Q |b_{2Q}-b_Q| d\mathcal{H}_\infty^\beta\\
		\le & \frac{C_\beta}{\ell(Q)^\beta} \int_Q |b-b_{2Q}| d\mathcal{H}_\infty^\beta + \frac{C_\beta}{\ell(Q)^\beta} \int_Q |b-b_Q| d\mathcal{H}_\infty^\beta\\
		\le & \frac{C_\beta}{\ell(2Q)^\beta} \int_{2Q} |b-b_{2Q}| d\mathcal{H}_\infty^\beta + \frac{C_\beta}{\ell(Q)^\beta} \int_Q |b-b_Q| d\mathcal{H}_\infty^\beta\\
		\le & C_\beta \|b\|_{ \operatorname{BMO}^\beta(\R^n) }
	\end{align*}
	Consequently,
	\begin{align*}
		|b_{2^kQ} - b_Q| \le & \sum_{i=1}^k |b_{2^iQ} - b_{2^{i-1}Q}| \le  \sum_{i=1}^k C_\beta \|b\|_{ \operatorname{BMO}^\beta(\R^n) }
		=  C_\beta k  \|b\|_{ \operatorname{BMO}^\beta(\R^n) }.
	\end{align*}
\end{proof}

We require the following result, which is a minor modification of \cite[Corollary 1.4]{Chen-Spector}.

\begin{theorem}\label{exp BMO}
	Let $0<\beta\le n \in \mathbb{N}$ and $b\in \operatorname{BMO}^\beta(\R^n)$. There exist constants $C_1,C_2>0$ such that 
	\begin{equation*}
		\frac{1}{\ell(Q)^\beta} \int_Q \exp \left( \frac{|b-b_Q|}{C_1 \| b\|_{\operatorname{BMO}^\beta(\R^n) }}\right) d\mathcal{H}_\infty^\beta \le C_2,
	\end{equation*}
	for each cube $Q$. 
\end{theorem}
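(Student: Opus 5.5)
Our plan is to derive the statement from \cite[Corollary 1.4]{Chen-Spector}, which gives a John--Nirenberg inequality for $\operatorname{BMO}^\beta$. That corollary provides a distributional estimate of the form
\begin{equation*}
	\mathcal{H}_\infty^\beta\bigl(\{x\in Q : |b(x)-c_Q|>t\}\bigr) \le C\,\ell(Q)^\beta \exp\Bigl(-\frac{c\,t}{\|b\|_{\operatorname{BMO}^\beta(\R^n)}}\Bigr), \qquad t>0,
\end{equation*}
where $c_Q$ is some admissible constant for $Q$, for instance a minimiser or a dyadic average. The modifications needed are that our constant is the specific minimiser $b_Q$, and that we want the integrated exponential form rather than the distributional one.

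\emph{Step 1: replace $c_Q$ by $b_Q$.} The difference $|c_Q-b_Q|$ is controlled by arguing as in the proof of Lemma \ref{bmo cQ}. A constant function satisfies $\int_Q |c|\,d\mathcal{H}_\infty^\beta = |c|\,\mathcal{H}_\infty^\beta(Q)\simeq_\beta |c|\,\ell(Q)^\beta$. Combining this with the quasi-triangle inequality of Lemma \ref{basicChoquet}(ii) gives
\begin{equation*}
	|c_Q-b_Q|\,\ell(Q)^\beta \lesssim \int_Q|b-c_Q|\,d\mathcal{H}_\infty^\beta+\int_Q|b-b_Q|\,d\mathcal{H}_\infty^\beta \lesssim \ell(Q)^\beta\|b\|_{\operatorname{BMO}^\beta(\R^n)}.
\end{equation*}
If $c_Q$ is a dyadic average, \eqref{averageestimate} together with \eqref{equivalenceofdyadic} gives the same bound. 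From the inclusion $\{|b-b_Q|>t\}\subset\{|b-c_Q|>t-C\|b\|_{\operatorname{BMO}^\beta(\R^n)}\}$, the distributional estimate transfers to $b_Q$. The price is a multiplicative constant $e^{cC}$, which is harmless. For $t\le C\|b\|_{\operatorname{BMO}^\beta(\R^n)}$ we instead use the trivial bound $\mathcal{H}_\infty^\beta(Q)\lesssim\ell(Q)^\beta$.

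\emph{Step 2: integrate.} Set $\lambda=C_1\|b\|_{\operatorname{BMO}^\beta(\R^n)}$ (if $\|b\|_{\operatorname{BMO}^\beta(\R^n)}=0$, then $b$ is constant quasi-everywhere and the claim is trivial). Using the definition of the Choquet integral and the substitution $s=e^{u}$,
\begin{align*}
	\int_Q \exp\Bigl(\frac{|b-b_Q|}{\lambda}\Bigr)d\mathcal{H}_\infty^\beta
	&= \int_0^\infty \mathcal{H}_\infty^\beta\bigl(\{x\in Q: e^{|b-b_Q|/\lambda}>s\}\bigr)\,ds\\
	&\le \mathcal{H}_\infty^\beta(Q)+\int_0^\infty e^{u}\,\mathcal{H}_\infty^\beta\bigl(\{x\in Q:|b-b_Q|>\lambda u\}\bigr)\,du.
\end{align*}
By Step 1, the last integral is bounded by $C\ell(Q)^\beta\int_0^\infty e^{u}e^{-cC_1u}\,du$. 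This is finite once $C_1>1/c$, say $C_1=2/c$. Dividing by $\ell(Q)^\beta$ then yields the claim with $C_2$ depending only on $\beta$ and $n$.

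The main obstacle is Step 1: we must check precisely which constant appears in \cite[Corollary 1.4]{Chen-Spector}, for example whether it is stated for dyadic Hausdorff content relative to $Q$ with a dyadic average. Everything else is a routine layer-cake computation.
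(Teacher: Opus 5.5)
Your argument is correct and is essentially what the paper intends. The paper gives no proof beyond calling the statement a minor modification of \cite[Corollary 1.4]{Chen-Spector}, and your two steps, swapping the constant in the John--Nirenberg bound for the minimiser $b_Q$ and then integrating the tail estimate, are such a modification. The one point you leave implicit is that the bound in \cite{Chen-Spector} is formulated on a fixed cube $Q_0$ with a ``suitable'' constant $c_Q$. This is harmless, since $\|b\|_{\operatorname{BMO}^\beta(Q_0)}\le\|b\|_{\operatorname{BMO}^\beta(\R^n)}$ and the constants do not depend on $Q_0$.

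The obstacle you flag in Step 1 is not a real one. Integrating the tail bound already gives $\int_Q|b-c_Q|\,d\mathcal{H}^\beta_\infty\le (C/c)\,\ell(Q)^\beta\|b\|_{\operatorname{BMO}^\beta(\R^n)}$ for whatever $c_Q$ appears there. Hence $|c_Q-b_Q|\lesssim\|b\|_{\operatorname{BMO}^\beta(\R^n)}$ follows from your triangle-inequality argument without identifying $c_Q$.
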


We also recall the following result from \cite[Corollary 1.5]{Chen-Spector}. 

\begin{theorem}\label{thm: JN p}
	Let $0<\beta\le n \in \mathbb{N}$ and $b\in \operatorname{BMO}^\beta(\R^n)$. There exists a constant $C=C(\beta)>0$ such that 
	\begin{equation*}
		\sup_Q \inf_{c\in \R} \left( \frac{1}{\ell(Q)^\beta} \int_Q |b-c|^p d\mathcal{H}_\infty^\beta\right)^\frac{1}{p} \le C p \|b\| _{\operatorname{\operatorname{BMO}^\beta} }
	\end{equation*}
	for all $p\ge 1$. 
\end{theorem}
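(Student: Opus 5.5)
The plan is to derive the $L^p$ bound directly from the exponential integrability in Theorem~\ref{exp BMO}, taking $c=b_Q$ in the infimum. The key tool is an elementary pointwise inequality between powers and the exponential.

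If $\|b\|_{\operatorname{BMO}^\beta(\R^n)}=0$, then for every cube $Q$ we have $\int_Q|b-b_Q|\,d\mathcal{H}^\beta_\infty=0$. Hence $b=b_Q$ outside a set of $\mathcal{H}^\beta_\infty$-content zero in $Q$, so both sides vanish and there is nothing to prove. From now on assume $\|b\|:=\|b\|_{\operatorname{BMO}^\beta(\R^n)}>0$.

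\emph{Pointwise inequality.} For every real $p\ge1$ and $t\ge0$ we have
\begin{equation*}
t^p\le \Gamma(p+1)\,e^{t}.
\end{equation*}
This holds because $\max_{t\ge0}t^pe^{-t}=p^pe^{-p}$, attained at $t=p$, and $p^pe^{-p}\le\Gamma(p+1)$ by Stirling's lower bound $\Gamma(p+1)\ge\sqrt{2\pi p}\,p^pe^{-p}$.

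\emph{Integration.} Fix a cube $Q$ and apply the inequality with $t=|b-b_Q|/(C_1\|b\|)$, where $C_1,C_2$ are the constants of Theorem~\ref{exp BMO}. The Choquet integral is monotone and positively homogeneous (Lemma~\ref{basicChoquet}(i)), so
\begin{equation*}
\frac{1}{\ell(Q)^\beta}\int_Q|b-b_Q|^p\,d\mathcal{H}^\beta_\infty\le (C_1\|b\|)^p\,\Gamma(p+1)\,\frac{1}{\ell(Q)^\beta}\int_Q\exp\Big(\frac{|b-b_Q|}{C_1\|b\|}\Big)d\mathcal{H}^\beta_\infty\le C_2\,\Gamma(p+1)\,(C_1\|b\|)^p .
\end{equation*}

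\emph{Extracting the root.} Taking $p$-th roots gives the bound $C_2^{1/p}\,\Gamma(p+1)^{1/p}\,C_1\|b\|$. We may assume $C_2\ge1$, so $C_2^{1/p}\le C_2$ for $p\ge1$. Moreover $\Gamma(p+1)\le (p+1)^p$ for $p\ge1$, which follows from log-convexity of $\Gamma$ (or from comparison with $p^p$ up to a factor), so $\Gamma(p+1)^{1/p}\le p+1\le 2p$. Since $b_Q$ is an admissible choice of $c$, and the bound holds uniformly in $Q$, taking the supremum over $Q$ yields the claim with $C=2C_1C_2$.

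I expect no genuine obstacle. The only points needing care are the non-integer $p$ case, handled by the Gamma-function bound rather than a Taylor series, and the uniformity of the constant, which comes from $C_2^{1/p}\le C_2$. The constants $C_1,C_2$ from Theorem~\ref{exp BMO} depend only on $\beta$ (and $n$), so $C$ does too.
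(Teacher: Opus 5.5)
Your argument is correct. The paper, however, gives no proof of this statement: it simply recalls it from \cite[Corollary 1.5]{Chen-Spector}, just as it imports Theorem~\ref{exp BMO} as a minor modification of \cite[Corollary 1.4]{Chen-Spector}.

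Your route shows that, within the paper, the $L^p$ bound is a formal consequence of the exponential integrability. You choose $c=b_Q$, use monotonicity and positive homogeneity of the Choquet integral, and apply a pointwise power--exponential inequality. This gives linear growth in $p$, so only one of the two imported results is really needed. An equally short alternative would be the layer-cake formula for the Choquet integral applied to a distributional John--Nirenberg bound $\mathcal{H}^\beta_\infty(\{x\in Q:|b-b_Q|>\lambda\})\lesssim \ell(Q)^\beta e^{-c\lambda/\|b\|}$. That route produces the same factor $\Gamma(p+1)^{1/p}\simeq p$, but needs the distributional estimate rather than the integrated one.

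You can simplify your argument: the detour through $\Gamma$ is unnecessary. Since $\sup_{t\ge 0}t^pe^{-t}=(p/e)^p$, you get $t^p\le (p/e)^p e^t$ directly, and after taking $p$-th roots the factor is $p/e$. This removes both the Stirling lower bound and the inequality $\Gamma(p+1)\le (p+1)^p$, which you justified only loosely. That inequality is true: for $p\ge1$, $\Gamma$ is increasing on $[2,\infty)$, so $\Gamma(p+1)\le\lceil p\rceil!\le\lceil p\rceil^{\lceil p\rceil-1}\le (p+1)^p$.

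Your handling of the degenerate case $\|b\|_{\operatorname{BMO}^\beta(\R^n)}=0$ is fine. It relies on the attainment of the infimum, which the paper notes. Enlarging $C_2$ to be at least $1$ is also legitimate.
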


We also require the following basic properties of $\operatorname{BMO}^\beta$ functions.

\begin{lemma}\label{basicbmobeta}
    Let $0<\beta \leq n \in \mathbb{N} $. Then there exists a constant $C_\beta> 0$ depending only on $\beta$ such that
\begin{enumerate}[label=(\roman*)]
    \item $\Vert f+ g \Vert_{\operatorname{BMO}^\beta(\mathbb{R}^n)} \leq 2 ( \Vert f \Vert_{\operatorname{BMO}^\beta(\mathbb{R}^n)} + \Vert g \Vert_{\operatorname{BMO}^\beta(\mathbb{R}^n)})$ 
    \item  $\Vert \lambda f \Vert_{\operatorname{BMO}^\beta(\mathbb{R}^n)} = |\lambda| \Vert f \Vert_{\operatorname{BMO}^\beta(\mathbb{R}^n)}$ for any $\lambda \in \mathbb{R} $
    \item $\Vert |f| \Vert_{\operatorname{BMO}^\beta(\mathbb{R}^n)} \leq C_\beta \Vert f \Vert_{\operatorname{BMO}^\beta(\mathbb{R}^n)}$
    \item  $\Vert \max(f,g)\Vert_{\operatorname{BMO}^\beta(\mathbb{R}^n)} \leq C_\beta ( \Vert f \Vert_{\operatorname{BMO}^\beta(\mathbb{R}^n)} + \Vert g \Vert_{\operatorname{BMO}^\beta(\mathbb{R}^n)} ) $
    \item  $\Vert \min(f,g)\Vert_{\operatorname{BMO}^\beta(\mathbb{R}^n)} \leq C_\beta ( \Vert f \Vert_{\operatorname{BMO}^\beta(\mathbb{R}^n)} + \Vert g \Vert_{\operatorname{BMO}^\beta(\mathbb{R}^n)} ) $
\end{enumerate}
\end{lemma}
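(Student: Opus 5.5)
The plan is to verify each property cube by cube, using only monotonicity of the Choquet integral and the elementary properties of Lemma~\ref{basicChoquet}, and then take the supremum over cubes $Q$. We deduce (iv) and (v) algebraically from (i)--(iii). No deep input, such as John--Nirenberg, should be needed.

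For (i), fix a cube $Q$ and arbitrary constants $c_1,c_2\in\R$. The pointwise bound $|f+g-(c_1+c_2)|\le |f-c_1|+|g-c_2|$, monotonicity of $\int_Q\cdot\,d\mathcal{H}^\beta_\infty$, and Lemma~\ref{basicChoquet}(ii) give
\begin{equation*}
\inf_{c}\frac{1}{\ell(Q)^\beta}\int_Q|f+g-c|\,d\mathcal{H}^\beta_\infty\le \frac{2}{\ell(Q)^\beta}\int_Q|f-c_1|\,d\mathcal{H}^\beta_\infty+\frac{2}{\ell(Q)^\beta}\int_Q|g-c_2|\,d\mathcal{H}^\beta_\infty .
\end{equation*}
Taking the infimum over $c_1$ and $c_2$ separately, and then the supremum over $Q$, yields (i).

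For (ii), the case $\lambda=0$ is trivial. For $\lambda\neq0$, the map $c\mapsto \lambda c$ is a bijection of $\R$ and $|\lambda f-\lambda c|=|\lambda||f-c|$, so Lemma~\ref{basicChoquet}(i) gives equality of the infima on each cube. For (iii), use the reverse triangle inequality $\bigl||f|-|c|\bigr|\le|f-c|$. Testing the infimum for $|f|$ with the constant $|c|$ gives, on each cube, the quantity for $|f|$ bounded by the one for $f$. This actually yields the constant $C_\beta=1$.

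For (iv) and (v), write
\begin{equation*}
\max(f,g)=\tfrac12\bigl(f+g+|f-g|\bigr),\qquad \min(f,g)=\tfrac12\bigl(f+g-|f-g|\bigr).
\end{equation*}
Apply (ii) to remove the factor $\tfrac12$ and (i) to split the sum. Apply (iii) to $|f-g|$, together with (ii) with $\lambda=-1$ to handle the minus sign and to write $\|f-g\|=\|f+(-g)\|$. Then (i) again gives $\|f-g\|_{\operatorname{BMO}^\beta}\le 2(\|f\|_{\operatorname{BMO}^\beta}+\|g\|_{\operatorname{BMO}^\beta})$. Collecting terms gives a bound of the form $C(\|f\|_{\operatorname{BMO}^\beta}+\|g\|_{\operatorname{BMO}^\beta})$ with an absolute constant.

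The only point needing care is that the triangle inequality holds only up to the factor $2$ from Lemma~\ref{basicChoquet}(ii), so constants compound when (i) is iterated. Since the number of iterations is fixed, this is harmless. I expect no real obstacle beyond bookkeeping these constants.
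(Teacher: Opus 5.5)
Your proposal is correct. For (i), (ii), (iv) and (v) it follows the paper: (i) and (ii) come from Lemma~\ref{basicChoquet}, and (iv) and (v) come from (iii) together with the max/min identities.

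The genuine difference is in (iii). The paper adapts a lemma from the literature and argues as follows:
\begin{itemize}
\item it passes to the dyadic content $\mathcal{H}^{\beta,Q_0}_\infty$, whose Choquet integral is genuinely subadditive;
\item it uses the averaging estimate \eqref{averageestimate} to compare $|f(x)|$ pointwise with the dyadic $\beta$-average of $|f|$ over $Q'$;
\item it splits $|f(x)-f(y)|\le|f(x)-c_{Q'}|+|f(y)-c_{Q'}|$ and integrates in $x$;
\item it returns to $\mathcal{H}^\beta_\infty$ via \eqref{equivalenceofdyadic}, ending with a constant $C_\beta>1$.
\end{itemize}

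Your argument instead uses the fact that $\operatorname{BMO}^\beta$ is defined through an infimum over constants. The pointwise bound $\bigl||f|-|c|\bigr|\le|f-c|$ and monotonicity of the Choquet integral let you test the infimum for $|f|$ with the constant $|c|$. This gives $\||f|\|_{\operatorname{BMO}^\beta}\le\|f\|_{\operatorname{BMO}^\beta}$ cube by cube, with constant $1$ and no dyadic machinery. It also makes the constants in (iv) and (v) absolute; for instance, $4$ works.

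The paper's route does produce a concrete admissible constant for $|f|$ on each cube, namely its dyadic $\beta$-average. That would matter if oscillation were measured against averages rather than an infimum over constants. For the statement as given, your route is shorter and gives a sharper constant.
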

\begin{proof}
    The proof of $(i)$ and $(ii)$ follows immediately from Lemma \ref{basicChoquet}.

The proof of $(iii)$ is a minor modification of Lemma~2.6 in \cite{basak2025uncenteredfractionalmaximalfunctions}.  We include the details for completeness. We define $\operatorname{BMO}^\beta$ with respect to the dyadic Hausdorff content by
\begin{align*}
\|f\|_{\operatorname{BMO}^\beta(\mathcal{H}^{\beta,Q_0}_{\infty})}:= \sup_{Q\subseteq \mathbb{R}^n} \inf_{c \in \mathbb{R}} \left( \frac{1}{\mathcal{H}^{\beta,Q_0}_{\infty}(Q)} \int_Q |f(y) - c| \; d\mathcal{H}^{\beta,Q_0}_{\infty}(y) \right)<\infty,
\end{align*}
where $Q_0 : = [0,1)^n$. It follows from (\ref{equivalenceofdyadic}) that
\begin{align*}
\|f\|_{\operatorname{BMO}^\beta(\mathcal{H}^{\beta,Q_0}_{\infty})} \cong \Vert f \Vert_{\operatorname{BMO}^\beta}.
\end{align*}
For cubes \(Q' \subseteq \mathbb{R}^n\), we set
\begin{align*}
c_{Q'}= \argmin_{c \in \mathbb{R}} \frac{1}{\mathcal{H}^{\beta,Q_0}_{\infty}(Q')}\int_{Q'} \, |f-c| \;d \mathcal{H}^{\beta,Q_0}_{\infty}.
\end{align*}
Then, for every \(x \in Q'\), we have by (\ref{averageestimate}) that
\begin{align*}
& \left| |f(x)|-\frac{1}{\mathcal{H}^{\beta,Q_0}_{\infty}		(Q')}\int_{Q'}|f(y)| \; d \mathcal{H}^{\beta,Q_0}_{\infty} (y) \right| \\
\leq & \frac{1}{\mathcal{H}^{\beta,Q_0}_{\infty}		(Q')}\int_{Q'} |f(x)-f(y)| \; d \mathcal{H}^{\beta,Q_0}_{\infty}(y) \\
=& \frac{1}{\mathcal{H}^{\beta,Q_0}_{\infty}		(Q')} \int_{Q'} |f(x)-c_{Q'}+c_{Q'}-f(y)| \; d \mathcal{H}^{\beta,Q_0}_{\infty} (y) \\
\leq & |f(x)-c_{Q'}| + \frac{1}{\mathcal{H}^{\beta,Q_0}_{\infty}		(Q')} \int_{Q'} |f(y)-c_{Q'}| \; d \mathcal{H}^{\beta,Q_0}_{\infty}(y)\\
\leq & |f(x)-c_{Q'}| + \|f\|_{\operatorname{BMO}^\beta(\mathcal{H}^{\beta,Q_0}_{\infty})}.
\end{align*}

Integrating the preceding estimate over \(Q'\) with respect to the Hausdorff content \(\mathcal{H}^{\beta}_{\infty}\), and then dividing by \(\mathcal{H}^{\beta}_{\infty}(Q')\), we obtain
\begin{align*}
&\frac{1}{\mathcal{H}^{\beta}_{\infty}		(Q')} \int_{Q'} \left| |f(x)|-\frac{1}{\mathcal{H}^{\beta,Q_0}_{\infty}		(Q')}\int_{Q'}|f| \; d \mathcal{H}^{\beta,Q_0}_{\infty} \right| \; d \mathcal{H}^{\beta}_{\infty} (x) \\ \leq &  \frac{1}{\mathcal{H}^{\beta}_{\infty}		(Q')} \int_{Q'} |f(x)-c_{Q'}| \; d \mathcal{H}^{\beta}_{\infty} (x) + \|f\|_{\operatorname{BMO}^\beta(\mathcal{H}^{\beta,Q_0}_{\infty})}\\
\leq  & \frac{C_\beta}{\mathcal{H}^{\beta,Q_0}_{\infty}		(Q')} \int_{Q'} |f(x)-c_{Q'}| \; d \mathcal{H}^{\beta,Q_0}_{\infty} (x) + \|f\|_{\operatorname{BMO}^\beta(\mathcal{H}^{\beta,Q_0}_{\infty})}
\\
\leq  & (1+ c_\beta) \|f\|_{\operatorname{BMO}^\beta(\mathcal{H}^{\beta,Q_0}_{\infty})}.
\end{align*}
Therefore, for each cube $Q'\subset \mathbb{R}^n$, we obtain
\begin{align*}
\nonumber&\inf_{c\in \mathbb{R}} \frac{1}{\mathcal{H}^{\beta}_{\infty}(Q')} \int_{Q'} \left| |f(x)|-c\right| \; d \mathcal{H}^{\beta}_{\infty} (x) \\ \leq& \frac{1}{\mathcal{H}^{\beta}_{\infty}(Q')} \int_{Q'} \nonumber\left| |f(x)|-\frac{1}{\mathcal{H}^{\beta,Q_0}_{\infty}(Q')}\int_{Q'}|f| \; d \mathcal{H}^{\beta,Q_0}_{\infty} \right| \; d \mathcal{H}^{\beta}_{\infty} (x)\nonumber\\ \leq &
(1+c_\beta) \|f\|_{\operatorname{BMO}^\beta(\mathcal{H}^{\beta,Q_0}_{\infty})}   \nonumber\\
\leq &  C_\beta  \|f\|_{\operatorname{BMO}^\beta}.
\end{align*}
Consequently,
\begin{align*}
    \Vert |f| \Vert_{\operatorname{BMO}^\beta(\mathbb{R}^n)} \leq  C_\beta  \Vert f \Vert_{\operatorname{BMO}^\beta(\mathbb{R}^n)}.
\end{align*}

    The proof of $(iv)$ and $(v)$ follws by $(iii)$ and the equalities
    \begin{align*}
        \max(f,g) = \frac{f+g + |f-g|}{2} \text{ and } \min(f,g) = \frac{f+g - |f-g|}{2}.
    \end{align*}
\end{proof}

An immediate application of Lemma~\ref{basicbmobeta} yields the following result.

\begin{lemma}\label{truncatedBMO}
    Let $0< \beta \leq n$, $b \in \operatorname{BMO}^\beta(\mathbb{R}^n)$ and $k>0 $. Let the truncated function $b_k$ be defined as 
 \begin{align*}
   b_k (x) := \begin{cases}
   k, & \text{if } b(x) > k;\\[6pt]
   b(x), & \text{if } -k \leq b(x) \leq k;\\
   -k & \text{if } b(x) < -k.
   \end{cases}
   \end{align*}
Then there exists a constant $C_\beta>0$, depending only on $\beta$, such that
\begin{align*}
    \Vert b_k \Vert_{\operatorname{BMO}^\beta(\mathbb{R}^n)} \leq C_\beta  \Vert b \Vert_{\operatorname{BMO}^\beta(\mathbb{R}^n)}.
\end{align*}
\end{lemma}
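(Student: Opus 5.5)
The truncated function can be written entirely in terms of maxima and minima of $b$ with constants:
\begin{equation*}
b_k = \max\bigl(\min(b,k),\,-k\bigr).
\end{equation*}
So the plan is to apply parts (iv) and (v) of Lemma~\ref{basicbmobeta} twice, treating the constants $k$ and $-k$ as elements of $\operatorname{BMO}^\beta(\mathbb{R}^n)$. Constants have zero $\operatorname{BMO}^\beta$ norm, since taking $c=k$ in the defining infimum makes the integrand vanish on every cube.

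First I verify the identity pointwise by checking the three cases of the definition of $b_k$:
\begin{itemize}
\item if $b(x)>k$, then $\min(b,k)=k$ and $\max(k,-k)=k$;
\item if $-k\le b(x)\le k$, both operations return $b(x)$;
\item if $b(x)<-k$, then $\min(b,k)=b(x)<-k$ and the maximum returns $-k$.
\end{itemize}

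Next, by Lemma~\ref{basicbmobeta}(v) with $f=b$ and $g\equiv k$,
\begin{equation*}
\|\min(b,k)\|_{\operatorname{BMO}^\beta(\mathbb{R}^n)}\le C_\beta\bigl(\|b\|_{\operatorname{BMO}^\beta(\mathbb{R}^n)}+\|k\|_{\operatorname{BMO}^\beta(\mathbb{R}^n)}\bigr)=C_\beta\|b\|_{\operatorname{BMO}^\beta(\mathbb{R}^n)}.
\end{equation*}
Then by Lemma~\ref{basicbmobeta}(iv) with $f=\min(b,k)$ and $g\equiv -k$, we get $\|b_k\|_{\operatorname{BMO}^\beta(\mathbb{R}^n)}\le C_\beta^2\|b\|_{\operatorname{BMO}^\beta(\mathbb{R}^n)}$. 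Renaming $C_\beta^2$ as $C_\beta$ gives the claim, and the constant depends only on $\beta$ and not on $k$.

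There is no real obstacle here. The one point to check is that parts (iv) and (v) of Lemma~\ref{basicbmobeta} apply when $g$ is a constant. They do, because their proof uses only (i)–(iii) together with the pointwise formulas for $\max$ and $\min$, and those steps place no restriction on $g$ beyond finite $\operatorname{BMO}^\beta$ norm.
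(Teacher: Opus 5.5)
Your proof is correct and is the argument the paper intends when it calls this lemma an immediate application of Lemma~\ref{basicbmobeta}. You write $b_k=\max(\min(b,k),-k)$, note that constants have zero $\operatorname{BMO}^\beta$ norm, and apply parts (v) and (iv) in turn, which gives a constant that depends only on $\beta$ and not on $k$.
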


\section{Capacitary Orlicz spaces and maximal functions}

A function $B:[0,\infty)\to[0,\infty)$ is called a Young function if it is continuous, convex, and strictly increasing, satisfying $B(0)=0$ and $\lim_{t\to\infty} B(t)=\infty$. Given two Young functions $A$ and $B$, we write $A(t)\approx B(t)$ if there exist constants $t_0,c_1,c_2>0$ such that $c_1A(t)\le B(t)\le c_2A(t)$ for all $t\ge t_0$. 

A Young function $B$ is said to be doubling if $B(2t)\le C B(t)$ for all $t>0$, and submultiplicative if $B(st)\le C B(s)B(t)$ for all $s,t>0$. Standard examples include $B(t)=t^r$ for $r\ge1$, and more generally $B(t)=t^a[\log(e+t)]^b$ with $a\ge1$ and $b>0$, both of which are submultiplicative.

Given a nonempty set $E\subset\R^n$ and a Young function $B$, we define the Orlicz space $L^B(E,\mathcal{H}_\infty^\beta)$ as the set of all functions $f$ for which $B(|f|/\lambda)$ is integrable on $E$ with respect to the Hausdorff content $\mathcal{H}_\infty^\beta$ for some $\lambda>0$. This space is equipped with the Luxemburg quasinorm
\begin{equation*}
	\|f\|_{L^B(E, \mathcal{H}_\infty^\beta)}=\inf \left\{\lambda>0: \int_E B\left(\frac{|f|}{\lambda}\right) d \mathcal{H}_\infty^\beta \leq 1\right\}.
\end{equation*}
In the specific case where $E$ is a cube $Q$, we also consider the mean Luxemburg quasinorm, defined by
\begin{equation*}
	\|f\|_{B, Q, \mathcal{H}_\infty^\beta}=\inf \left\{\lambda>0: \frac{1}{\ell(Q)^\beta} \int_Q B\left(\frac{|f|}{\lambda}\right) d \mathcal{H}_\infty^\beta \leq 1\right\}.
\end{equation*}

\begin{remark}\label{rem exp BMO}
	With the previous notation, Theorem \ref{exp BMO} may be restated as
	\begin{equation*}
		\| b- b_Q\|_{B, Q, \mathcal{H}_\infty^\beta } \le C \| b\|_{\operatorname{BMO}^\beta(\R^n) },
	\end{equation*}
	for each cube $Q$, where $B(t)=e^t-1$.
\end{remark}

Given a Young function $B$, its complementary Young function is defined as
\begin{equation*}
	\bar{B}(t)=\sup_{s>0}\{s t-B(s)\}, \qquad t>0.
\end{equation*}

\begin{remark}\label{rem: B complementary}
    Let $B(t)=t\log(e+t)$, the complementary Young function $\bar{B}$ satisfies $\bar{B}(t)\lesssim e^t-1$ (see \cite{BennettSharpley}).
\end{remark}

We need the Hölder's inequality for Orlicz spaces with respect to the Hausdorff content.

\begin{lemma}\label{lem: Holder orlicz}
	Given a Young function $B$, then for all functions $f$ and $g$ and all cubes $Q$,

\begin{equation}\label{Holder Orlicz}
	\frac{1}{\ell(Q)^\beta} \int_Q|f g| d \mathcal{H}_\infty^\beta \leq C_\beta \|f\|_{B, Q, \mathcal{H}_\infty^\beta}\|g\|_{\bar{B}, Q, \mathcal{H}_\infty^\beta}.
\end{equation}

\end{lemma}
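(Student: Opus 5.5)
The plan is to follow the classical proof of Hölder's inequality in Orlicz spaces. It rests on the Young inequality $st\le B(s)+\bar B(t)$, which holds for all $s,t\ge0$ directly from the definition of $\bar B$. The only new ingredient is the quasi-subadditivity of the Choquet integral from Lemma~\ref{basicChoquet}(ii), and this is where the constant $C_\beta$ comes from.

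Set $\lambda=\|f\|_{B,Q,\mathcal{H}^\beta_\infty}$ and $\mu=\|g\|_{\bar B,Q,\mathcal{H}^\beta_\infty}$.

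\textbf{Degenerate cases.} If $\lambda$ or $\mu$ is infinite there is nothing to prove. Suppose instead $\lambda=0$, with $\mu<\infty$ (the case $\mu=0$ is symmetric).
\begin{itemize}
\item For every $\varepsilon>0$ we have $\frac{1}{\ell(Q)^\beta}\int_Q B(|f|/\varepsilon)\,d\mathcal{H}^\beta_\infty\le1$.
\item Since $B$ is strictly increasing and unbounded, Chebyshev's inequality for the Choquet integral gives that $\{|f|>\delta\}\cap Q$ has content at most $\ell(Q)^\beta/B(\delta/\varepsilon)\to0$ as $\varepsilon\to0$. Hence $f=0$ $\mathcal{H}^\beta_\infty$-a.e. on $Q$.
\item Therefore $\{|fg|>t\}\cap Q$ is $\mathcal{H}^\beta_\infty$-null for every $t>0$, so the left-hand side vanishes.
\end{itemize}

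\textbf{Main case.} Assume $0<\lambda,\mu<\infty$ and take any $\lambda'>\lambda$ and $\mu'>\mu$.
\begin{itemize}
\item By the definition of the Luxemburg quasinorm as an infimum, and since $B$ is increasing, we get $\frac{1}{\ell(Q)^\beta}\int_Q B(|f|/\lambda')\,d\mathcal{H}^\beta_\infty\le1$. Likewise $\frac{1}{\ell(Q)^\beta}\int_Q \bar B(|g|/\mu')\,d\mathcal{H}^\beta_\infty\le1$.
\item Apply Young's inequality pointwise: $\frac{|f g|}{\lambda'\mu'}\le B(|f|/\lambda')+\bar B(|g|/\mu')$.
\item Integrate over $Q$ against $\mathcal{H}^\beta_\infty$. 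The Choquet integral is monotone, because the level sets are nested. Use positive homogeneity (Lemma~\ref{basicChoquet}(i)) to pull out $\frac{1}{\lambda'\mu'}$, and Lemma~\ref{basicChoquet}(ii) to split the sum. This gives
\[
\frac{1}{\lambda'\mu'}\,\frac{1}{\ell(Q)^\beta}\int_Q|fg|\,d\mathcal{H}^\beta_\infty\le 2\,(1+1)=4 .
\]
\item Let $\lambda'\downarrow\lambda$ and $\mu'\downarrow\mu$ to conclude with $C_\beta=4$.
\end{itemize}
One could use the dyadic content $\mathcal{H}^{\beta,Q}_\infty$, which is genuinely subadditive, together with \eqref{equivalenceofdyadic}. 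That route gives constant $2C_\beta^2$ instead, but it is unnecessary here.

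\textbf{Where care is needed.} The only delicate point is the lack of additivity of the Choquet integral with respect to $\mathcal{H}^\beta_\infty$ for $\beta<n$. This is why we cannot obtain the sharp constant $2$ of the Lebesgue case, and why a constant appears in \eqref{Holder Orlicz}. Lemma~\ref{basicChoquet}(ii) handles it. Measurability issues do not arise, since the Choquet integral is defined for arbitrary nonnegative functions via the outer capacity.
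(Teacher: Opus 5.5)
Your proof is correct and follows the same route as the paper. You apply Young's inequality $st\le B(s)+\bar B(t)$ pointwise to $|f|/\lambda$ and $|g|/\mu$, then integrate over $Q$ using homogeneity and the quasi-subadditivity of Lemma~\ref{basicChoquet}(ii). Your extra care fills in details the paper passes over: you work with $\lambda'>\lambda$, $\mu'>\mu$ so as not to assume the Luxemburg infimum is attained, and you treat the zero-norm case separately. This gives the explicit constant $4$ in place of the paper's $2C_\beta$.
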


\begin{proof}
	We have Young's inequality, 
	\begin{equation*}
		xy\le B(x)+\bar{B}(y),
	\end{equation*}
	for all $x,y\ge 0$. Then, we have
	\begin{equation*}
		\frac{|f(x)|}{\|f\|_{B, Q, \mathcal{H}_\infty^\beta}} \frac{|g(x)|}{\|g\|_{\bar{B}, Q, \mathcal{H}_\infty^\beta}} \le B\left(\frac{|f(x)|}{\|f\|_{B, Q, \mathcal{H}_\infty^\beta} }\right) +\bar{B}\left( \frac{|g(x)|}{\|g\|_{\bar{B}, Q, \mathcal{H}_\infty^\beta}}\right).
	\end{equation*}
	Integrating over $Q$,
	\begin{align*}
		\frac{1}{\ell(Q)^\beta} \int_Q \frac{|f|}{\|f\|_{B, Q, \mathcal{H}_\infty^\beta}} \frac{|g|}{\|g\|_{\bar{B}, Q, \mathcal{H}_\infty^\beta}}  d \mathcal{H}_\infty^\beta\le & C_\beta \frac{1}{\ell(Q)^\beta} \int_Q   B\left(\frac{|f(x)|}{\|f\|_{B, Q, \mathcal{H}_\infty^\beta} }\right)  d \mathcal{H}_\infty^\beta\\
		&+ C_\beta \frac{1}{\ell(Q)^\beta} \int_Q   \bar{B}\left( \frac{|g(x)|}{\|g\|_{\bar{B}, Q, \mathcal{H}_\infty^\beta}}\right) d \mathcal{H}_\infty^\beta\\
		\le & 2C_\beta.
	\end{align*}
\end{proof}

These concepts motivate the following definition of the Orlicz fractional maximal operator associated with Hausdorff content.

\begin{definition}
	Let $n \in \mathbb{N}$ and $0<\alpha< \beta \leq n$. The $\beta$-dimensional fractional maximal operator with respect to a Young function $B$ of a function $f$ in $\mathbb{R}^n$ is defined by
	\begin{equation*}
		\mathcal{M}_{\alpha, B, \mathcal{H}_\infty^\beta} f(x) = \sup_{ Q} \chi_Q(x) \ell(Q)^\alpha \|f\|_{B, Q, \mathcal{H}_\infty^\beta}.
	\end{equation*}
	When $\alpha=0$, we simply write $\mathcal{M}_{0, B, \mathcal{H}_\infty^\beta} f = \mathcal{M}_{ B, \mathcal{H}_\infty^\beta} f $. Let $\mathcal{D}$ be the collection of the usual dyadic cubes in $\mathbb{R}^n$, we define the dyadic $\beta$-dimensional fractional maximal operator with respect to a Young function $B$ of a function $f$ as 
	\begin{equation*}
		\mathcal{M}_{\alpha, B, \mathcal{H}_\infty^{\beta, Q_0}}^{\mathcal{D}} f(x) = \sup_{ P\in \mathcal{D}} \chi_P(x) \ell(P)^\alpha \|f\|_{B, P, \mathcal{H}_\infty^{\beta, Q_0}}.
	\end{equation*}

\end{definition}

\begin{remark}\label{remark Holder}
	By setting $g=1$ in the Hölder inequality, we observe that for any Young function $B$ and $0<\alpha<\beta$,
	\begin{equation*}
		\mathcal{M}_{\alpha, \mathcal{H}_\infty^\beta} f(x) \le C_\beta \, \mathcal{M}_{\alpha, B, \mathcal{H}_\infty^\beta} f(x)
	\end{equation*}
	for all $x\in \R^n$.
\end{remark}

\begin{lemma}\label{lemma dyadic orlicz}
    Let $0< \beta \le n$ and $0\le \alpha <\beta$. Let $f$ be a locally integrable function, and let $B$ be a Young function. Suppose that for some cube $Q$ and some parameter $t>0$, we have
	\begin{equation}\label{lemma orlicz eq1}
		\ell(Q)^\alpha \left\| f\right\|_{B, Q, \mathcal{H}_\infty^\beta }>t.
	\end{equation}
	Then, there exists a dyadic cube $P\in \mathcal{D}$ such that $Q\subset 3P$ and a constant $C_{ \beta, n}>0$ such that
	\begin{equation*}
		\ell(P)^\alpha \left\| f\right\|_{B, P, \mathcal{H}_\infty^{\beta, Q_0}}>c_{\beta ,n}t.
	\end{equation*}
\end{lemma}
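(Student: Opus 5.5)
Fix the cube $Q$ from \eqref{lemma orlicz eq1}. We will choose a dyadic cube $P\in\mathcal D$ with $Q\subset 3P$ and $\ell(Q)\le \ell(P)< 2\ell(Q)$, compare the two mean Luxemburg quasinorms, and use $\ell(P)\simeq\ell(Q)$ to transfer the factor $\ell(\cdot)^\alpha$.

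\textbf{Choice of $P$.} Let $k\in\mathbb Z$ satisfy $2^{k-1}<\ell(Q)\le 2^k$, and let $P$ be the dyadic cube of side $2^k$ that contains the centre of $Q$. Every point of $Q$ lies within distance $\ell(Q)/2\le \ell(P)/2$ of that centre in each coordinate. Hence $Q$ lies inside the union of $P$ and its neighbours, that is, $Q\subset 3P$. Moreover $\ell(P)^\alpha\ge \ell(Q)^\alpha$, since $\alpha\ge 0$.

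\textbf{Comparing quasinorms.} Let $\lambda=\|f\|_{B,3P,\mathcal H^{\beta,Q_0}_\infty}$, where the averaging is over $3P$ with normalisation $\ell(3P)^\beta$. Since $Q\subset 3P$ and $\ell(3P)\le 6\ell(Q)$, the equivalence \eqref{equivalenceofdyadic} gives
\begin{equation*}
\frac{1}{\ell(Q)^\beta}\int_Q B\Big(\frac{|f|}{\lambda}\Big)d\mathcal H^\beta_\infty\le \frac{C_\beta 6^\beta}{\ell(3P)^\beta}\int_{3P}B\Big(\frac{|f|}{\lambda}\Big)d\mathcal H^{\beta,Q_0}_\infty\le C_\beta 6^\beta=:K.
\end{equation*}
By convexity and $B(0)=0$ we have $B(t/K)\le B(t)/K$ for $K\ge1$. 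Therefore $\|f\|_{B,Q,\mathcal H^\beta_\infty}\le K\lambda$.

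\textbf{Passing from $3P$ to a single dyadic cube.} The cube $3P$ is the union of $3^n$ dyadic cubes $P_j$ of side $\ell(P)$. The Choquet integral over the union is bounded by the sum of the integrals, using the countable subadditivity of the dyadic content stated in Section 2 applied to $\sum_j g\chi_{P_j}$. Hence
\begin{equation*}
\frac{1}{\ell(3P)^\beta}\int_{3P}B\Big(\frac{|f|}{\mu}\Big)d\mathcal H^{\beta,Q_0}_\infty\le 3^{n-\beta}\max_j \frac{1}{\ell(P_j)^\beta}\int_{P_j}B\Big(\frac{|f|}{\mu}\Big)d\mathcal H^{\beta,Q_0}_\infty.
\end{equation*}
Take $\mu=3^{n-\beta}\max_j\|f\|_{B,P_j,\mathcal H^{\beta,Q_0}_\infty}$ and use convexity once more. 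This gives $\lambda\le 3^{n-\beta}\|f\|_{B,P_{j_0},\mathcal H^{\beta,Q_0}_\infty}$ for some $j_0$.

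\textbf{Conclusion.} Replace $P$ by $P_{j_0}$. It is dyadic, has side $\ell(P)\ge\ell(Q)$, and satisfies $Q\subset 3P\subset 5P_{j_0}$. If the statement genuinely requires $3P$ rather than $5P$, we instead use the standard trick of choosing $P$ one dyadic generation larger: take $\ell(P)=2^{k+1}$, so that $Q$ meets $P$ and lies in its $1$-neighbourhood, and then decompose $3P$ as above. Combining the estimates gives
\begin{equation*}
t<\ell(Q)^\alpha\|f\|_{B,Q,\mathcal H^\beta_\infty}\le K\,3^{n-\beta}\,\ell(P_{j_0})^\alpha\|f\|_{B,P_{j_0},\mathcal H^{\beta,Q_0}_\infty},
\end{equation*}
which is the claim with $c_{\beta,n}=(K3^{n-\beta})^{-1}$.

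The main obstacle is the bookkeeping of the containment $Q\subset 3P$ while still landing on a single dyadic cube. If the decomposition step loses the inclusion, I would fall back on the one-third trick, choosing among the shifted dyadic grids. Since the statement fixes the lattice $\mathcal D$, however, the route most likely to succeed is to enlarge the dyadic generation, as described in the conclusion.
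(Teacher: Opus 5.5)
There is a genuine gap: the cube you finally select need not satisfy $Q\subset 3P$, which is part of the statement. Your norm comparisons themselves are correct.

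In the step passing from $3P$ to a single dyadic cube, you maximise over \emph{all} $3^n$ dyadic subcubes $P_j$ of $3P$, and the maximiser $P_{j_0}$ is dictated by $f$, not by $Q$. In one dimension, take $P=[0,1)$, $Q=[0.45,1.35]$ and $f=\chi_Q+M\chi_{[-1,0)}$ with $M$ large. Your argument selects $P_{j_0}=[-1,0)$, but $3P_{j_0}=[-2,1)\not\supset Q$. So what you have actually proved is the variant with $Q\subset 5P_{j_0}$.

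Your suggested repair does not fix this. Whatever generation you choose for $P$, guaranteeing $Q\subset 3P_j$ for every subcube $P_j$ of $3P$ means $Q\subset\bigcap_j 3P_j=P$. A cube containing the origin in its interior lies in no cube of $\mathcal{D}$ at any scale.

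The missing idea is to pigeonhole only over dyadic cubes that meet $Q$, which is what the paper does. Let $P_1,\dots,P_m$ ($m\le 2^n$) be the cubes of $\mathcal{D}$ of side $2^k\ge\ell(Q)$ that meet $Q$; each satisfies $Q\subset 3P_j$. Skip the intermediate set $3P$. Then \eqref{equivalenceofdyadic}, subadditivity over these disjoint pieces covering $Q$, and $\ell(P_j)<2\ell(Q)$ give
\[
\frac{1}{\ell(Q)^\beta}\int_Q B\Big(\frac{|f|}{\mu}\Big)\,d\mathcal{H}^\beta_\infty\le C_\beta\,2^{n+\beta}\max_{j}\frac{1}{\ell(P_j)^\beta}\int_{P_j}B\Big(\frac{|f|}{\mu}\Big)\,d\mathcal{H}^{\beta,Q_0}_\infty .
\]
Your convexity argument then yields $\|f\|_{B,Q,\mathcal{H}^\beta_\infty}\le C_\beta 2^{n+\beta}\max_j\|f\|_{B,P_j,\mathcal{H}^{\beta,Q_0}_\infty}$, which proves the lemma with $Q\subset 3P_{j_0}$. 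Equivalently, run your own argument on $f\chi_Q$: subcubes of $3P$ missing $Q$ then contribute nothing, so the maximiser must meet $Q$.

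The paper phrases the same pigeonhole through a quasi-triangle inequality applied to $\sum_j\chi_{P_j}f$ on $Q$, followed by the comparison $\|\chi_{P_1}f\|_{B,Q}\le 2^\beta\|f\|_{B,P_1}$. Your $5P$ version would in fact suffice for Corollary~\ref{cor: dyadic orlicz} (with $5^n$ in place of $3^n$), but it is not the statement as posed.
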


\begin{proof}
	Let $k\in \Z$ be the unique integer such that $2^{k-1}<\ell(Q)\le 2^k$. There exist at least one and at most $2^n$ dyadic cubes $\{P_j\}_{j=1}^m$ ($1\le m\le 2^n$) of side length $2^k$ that intersect the interior of $Q$. Note that $Q\subset 3P_j$ for each $j$. We claim that one of these cubes, say $P_1$, satisfies
	\begin{equation*}
		\ell(P_1)^\alpha \left\| \chi_{P_1} f\right\|_{B, Q, \mathcal{H}_\infty^{\beta, Q_0}}>\frac{t}{c_\beta 2^n}.
	\end{equation*}
	Indeed, if for each $j=1,\dots,m$ we had
	\begin{equation*}
		\ell(P_j)^\alpha \left\| \chi_{P_j} f\right\|_{B, Q, \mathcal{H}_\infty^{\beta}}\le \frac{t}{c_\beta 2^n},
	\end{equation*}
	then it would follow that
	\begin{align*}
		\ell(Q)^\alpha \left\| f\right\|_{B, Q, \mathcal{H}_\infty^{\beta, Q_0}} = &  \ell(Q)^\alpha \left\| \chi_{\cup_{j=1}^m P_j}   f\right\|_{B, Q, \mathcal{H}_\infty^{\beta}}\\
		\le & c_\beta \sum_{j=1}^m \ell(Q)^\alpha \left\| \chi_{ P_j}   f\right\|_{B, Q, \mathcal{H}_\infty^{\beta, Q_0}}\\
		\le & c_\beta  \sum_{j=1}^m \ell(P_j)^\alpha \left\| \chi_{ P_j}   f\right\|_{B, Q, \mathcal{H}_\infty^{\beta, Q_0}}\\
		\le & c_\beta  \sum_{j=1}^m \frac{t}{c_\beta 2^n} \\
		= & \frac{m}{2^n } t\\
		\le & t,
	\end{align*}
	which contradicts \eqref{lemma orlicz eq1}. Therefore, we must have
	\begin{equation*}
		\ell(P_1)^\alpha \left\| \chi_{P_1} f\right\|_{B, Q, \mathcal{H}_\infty^{\beta, Q_0}}> \frac{t}{c_\beta 2^n}.
	\end{equation*}
	Using the fact that $\ell(P_1)^\beta  \le 2^\beta \ell(Q)^\beta$, we obtain
	\begin{align*}
		\frac{t}{c_\beta 2^n}< &\ell(P_1)^\alpha \left\| \chi_{P_1} f\right\|_{B, Q, \mathcal{H}_\infty^{\beta, Q_0}} \\
		\le & 2^\beta \ell(P_1)^\alpha \left\| f\right\|_{B, P_1, \mathcal{H}_\infty^{\beta, Q_0}}.
	\end{align*}
	This concludes the proof with the constant $C=\frac{1}{c_\beta 2^{n+\beta}}$.
\end{proof}

The following corollary is obtained by using the previous lemma together with an argument from \cite[Lemma 2.4]{chen2023capacitary}.

\begin{corollary}\label{cor: dyadic orlicz}
	There exists a constants $c=c(n,\beta)>0$ such that 
	\begin{equation*}
		\hcal_\infty^\beta \left( \left\lbrace x\in \R^n : \mathcal{M}_{\alpha, B, \mathcal{H}_\infty^\beta} f(x)>t \right\rbrace \right) \le 3^n \,  \hcal_\infty^\beta \left( \left\lbrace x\in \R^n : \mathcal{M}_{\alpha, B, \mathcal{H}_\infty^{\beta, Q_0}}^{\mathcal{D}} f(x) > c\, t \right\rbrace \right)
	\end{equation*}
	holds every $t>0$. 
\end{corollary}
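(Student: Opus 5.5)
The plan is to cover the level set $E_t=\{x: \mathcal{M}_{\alpha,B,\mathcal{H}^\beta_\infty} f(x)>t\}$ by dilates of dyadic cubes on which the dyadic Orlicz maximal function is large, and then pass to Hausdorff content using the fact that dilating a cube by $3$ multiplies its $\beta$-content by at most $3^\beta\le 3^n$. I will follow the scheme of \cite[Lemma 2.4]{chen2023capacitary}.

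First, fix $x\in E_t$. By definition there is a cube $Q\ni x$ with $\ell(Q)^\alpha\|f\|_{B,Q,\mathcal{H}^\beta_\infty}>t$. Lemma~\ref{lemma dyadic orlicz} then gives a dyadic cube $P\in\mathcal{D}$ with $Q\subset 3P$ and $\ell(P)^\alpha\|f\|_{B,P,\mathcal{H}^{\beta,Q_0}_\infty}>c_{\beta,n}t$. Every point of $P$ therefore belongs to
\[
F:=\{y:\mathcal{M}^{\mathcal{D}}_{\alpha,B,\mathcal{H}^{\beta,Q_0}_\infty}f(y)>ct\},\qquad c=c_{\beta,n},
\]
so $P\subset F$. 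Since $x\in Q\subset 3P$, it follows that $E_t\subset\bigcup_{P\in\mathcal{F}}3P$, where $\mathcal{F}$ is the family of dyadic cubes contained in $F$ that satisfy the Orlicz condition.

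Second, I select the maximal cubes of $\mathcal{F}$. This requires that the cubes in $\mathcal{F}$ have bounded side length, or that $F$ contains no infinite increasing chain; otherwise one first restricts to cubes of side at most $2^N$ and lets $N\to\infty$ at the end. The maximal cubes $\{P_j\}$ are pairwise disjoint and their union is contained in $F$.

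Third, I cover $F$ efficiently. Take an arbitrary ball cover $\{B(x_i,r_i)\}$ of $F$ with $\sum\omega_\beta r_i^\beta\le \mathcal{H}^\beta_\infty(F)+\varepsilon$. Then $\{B(x_i,3r_i)\}$ covers $\bigcup 3P_j$, provided each $P_j$ lies inside the union of those $B(x_i,r_i)$ meeting it, and this only works if the dilation is arranged compatibly. Since dilating a whole ball cover does not directly cover $3P_j$, I instead argue through a dyadic cover. Using \eqref{equivalenceofdyadic}, the quantity $\mathcal{H}^\beta_\infty(F)$ is comparable to an infimum over covers by dyadic cubes $\{R_k\}$. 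Each $P_j$ is either contained in some $R_k$ or is a union of the $R_k$'s below it. In the second case the cubes $3R_k$ still cover $3P_j$ only in a neighborhood sense. A cleaner route is to replace each $R_k$ containing some $P_j$ by $3R_k$, which covers $3P_j$. For the cubes $P_j$ not contained in any $R_k$, I pass to the covering cubes inside $P_j$, whose total $\beta$-sum is at least $\ell(P_j)^\beta$ by the minimality properties (packing, Lemma~\ref{packing2}), and then use $3P_j$ itself.

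Combining these steps gives
\[
\mathcal{H}^\beta_\infty\Bigl(\bigcup_j 3P_j\Bigr)\le 3^\beta C\,\mathcal{H}^\beta_\infty(F)\le 3^n\,\mathcal{H}^\beta_\infty(F),
\]
after absorbing the constants into the choice of $c$ and the content normalization.

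The main obstacle is this last step: controlling $\mathcal{H}^\beta_\infty(\bigcup 3P_j)$ by $\mathcal{H}^\beta_\infty(\bigcup P_j)$ with constant $3^n$. Hausdorff content is not a measure, so summing $\ell(3P_j)^\beta$ fails. I would try the following: optimize the cover of $\bigcup P_j$ among dyadic cubes, and note that an optimal dyadic cover can be taken to consist of cubes each of which either equals an ancestor of some $P_j$ or is one of the $P_j$. Tripling every cube in that cover then gives a cover of $\bigcup 3P_j$ whose $\beta$-sum is multiplied by $3^\beta$.
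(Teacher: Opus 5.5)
Your first step is the paper's: Lemma~\ref{lemma dyadic orlicz} gives $E_t\subseteq\bigcup\{3P : P\in\mathcal{D},\ P\subseteq L_{ct}\}$, where $L_{ct}$ is your set $F$. The routes diverge when comparing the content of this union with $\mathcal{H}^\beta_\infty(L_{ct})$.

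\begin{itemize}
\item The paper writes $3P=\bigcup_{i=1}^{3^n}\tau_iP$ and invokes subadditivity and translation invariance.
\item You pass to the maximal cubes $P_j$ and take a near-optimal dyadic cover $\{R_k\}$ of $L_{ct}$. You keep the $R_k$ that contain some $P_j$, use $P_j$ itself when it lies in no $R_k$, and triple.
\end{itemize}

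Once you delete the false starts in your third step (the ball cover, the ``neighborhood sense'' remark), this argument is correct. Two citations need fixing. The inequality you need is not the packing Lemma~\ref{packing2} but the elementary bound
\[
\sum_{R_k\subsetneq P_j}\ell(R_k)^\beta\ \ge\ \ell(P_j)^{\beta-n}\sum_{R_k\subsetneq P_j}\ell(R_k)^n\ \ge\ \ell(P_j)^\beta .
\]
It holds because $\beta\le n$ and these $R_k$ cover $P_j$. You also need the remark that, since the $P_j$ are disjoint, no $R_k$ is counted twice. Finally, maximal cubes exist for a simpler reason than truncation with $N\to\infty$. If some cube of the family lay in no maximal one, it would have ancestors of arbitrarily large side inside $L_{ct}$, so $\mathcal{H}^\beta_\infty(L_{ct})=\infty$ and there is nothing to prove.

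\emph{What your route buys.} It genuinely justifies the comparison step. In the paper, $\tau_iP=P+\ell(P)v_i$ with $v_i\in\{-1,0,1\}^n$, so the shift depends on $\ell(P)$. Hence $\bigcup_P\tau_iP$ is not a translate of $\bigcup_PP$, and translation invariance does not give the claimed equality. For example, take $n=1$ and $L_{ct}=[0,1)$. The right shifts of all dyadic subintervals of $[0,1)$ fill $(0,2)$, whose content is $2^\beta\mathcal{H}^\beta_\infty([0,1))$.

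\emph{What it costs.} You lose the exact constant. Split each $3R_k$ and each $3P_j$ into $3^n$ dyadic cubes of the same side (taking $\mathcal{D}(Q_0)=\mathcal{D}$). This gives $\mathcal{H}^{\beta,Q_0}_\infty(E_t)\le 3^n\,\mathcal{H}^{\beta,Q_0}_\infty(L_{ct})$ for the dyadic content. Passing to $\mathcal{H}^\beta_\infty$ via \eqref{equivalenceofdyadic} twice gives the constant $3^nC_\beta^2$.

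Your closing claim that the extra factor can be absorbed ``into the choice of $c$'' is wrong. A multiplicative constant in front of $\mathcal{H}^\beta_\infty$ cannot be moved into the threshold $ct$. So you prove the corollary with some $C(n,\beta)$ in place of $3^n$. That is all that is used in Theorem~\ref{thm: weak orlicz fractional}.
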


\begin{proof}
	For $t>0$, we define
	\begin{equation*}
		E_t = \left\lbrace x\in \R^n : \mathcal{M}_{\alpha, B, \mathcal{H}_\infty^\beta} f(x)>t \right\rbrace,
	\end{equation*}
	and
	\begin{equation*}
		L_t = \left\lbrace x\in \R^n : \mathcal{M}_{\alpha, B, \mathcal{H}_\infty^{\beta, Q_0}}^{\mathcal{D}} f(x) > t \right\rbrace .
	\end{equation*}
	
	Let $x\in E_t$, then there exists a cube $Q_x$ with $x\in Q_x$ such that $\ell(Q)^\alpha \|f\|_{B, Q, \mathcal{H}_\infty^\beta}>t$, and this implies $Q_x\subseteq E_t$. By Lemma \ref{lemma dyadic orlicz}, there exists $P_x\in \mathcal{D}$ with $Q_x\subseteq 3P_x$ and such that $\ell(P_x)^\alpha \left\| f\right\|_{B, P_x, \mathcal{H}_\infty^{\beta, Q_0}}>c\, t$, where $c=c(n,\beta)>0$. Hence, $P_x\subseteq L_{c\, t}$ and
	\begin{equation*}
		E_t \subseteq \bigcup_{x\in E_t} Q_x \subseteq \bigcup_{x\in E_t} 3P_x \subseteq \left\lbrace 3P : P\in \mathcal{D} \text{ and } P\subseteq L_{c\, t} \right\rbrace. 
	\end{equation*}
	
	For each $P\in \mathcal{D}$ with $P\subseteq L_{c\, t}$ we can write $3P = \bigcup_{i=1}^{3^n} \tau_i P$ for appropiate translation operators $\{ \tau_i \}_{i=1}^{3^n}$. Hence, using the subadditivity and translation invariance of $\hcal_\infty^\beta$, we have
	\begin{align*}
		\hcal_\infty^\beta \left( E_t \right) \le & \hcal_\infty^\beta \left(\left\lbrace 3P : P\in \mathcal{D} \text{ and } P\subseteq L_{c\, t} \right\rbrace \right)\\
		= & \hcal_\infty^\beta \left(\left\lbrace \cup_{i=1}^{3^n} \tau_i P : P\in \mathcal{D} \text{ and } P\subseteq L_{c\, t} \right\rbrace \right)\\
		\le & \sum_{i=1}^{3^n} \hcal_\infty^\beta \left(\left\lbrace  \tau_i P : P\in \mathcal{D} \text{ and } P\subseteq L_{c\, t} \right\rbrace \right)\\
		= & 3^n \hcal_\infty^\beta \left(\left\lbrace  P : P\in \mathcal{D} \text{ and } P\subseteq L_{c\, t} \right\rbrace \right)\\
		= & 3^n \hcal_\infty^\beta \left( L_{c\, t}  \right).
	\end{align*}	
\end{proof}

\begin{definition}\label{hbfunction}
Given a Young function $B$, we define the function $h_B$ by
\begin{equation*}
h_B(s) = \sup_{t>0} \frac{B(st)}{B(t)}
\end{equation*}
for each $0 \le s < \infty$.
\end{definition}

\begin{remark}
As shown in \cite[Lemma 3.11]{Cruz-Uribe-Fiorenza}, the function $h_B$ is submultiplicative, increasing on $[0,\infty)$, and strictly increasing on $[0,1]$, with $h_B(1)=1$.
\end{remark}

We also use the following lemma, whose proof can be found in \cite{HLP}.

\begin{lemma}\label{lemma Phi}
Suppose that the function $t \mapsto \Phi(t)/t$ is decreasing. Then, for any sequence of positive numbers $\{x_k\}$, we have
\begin{equation*}
\Phi \left( \sum_k x_k \right) \le \sum_k \Phi(x_k).
\end{equation*}
\end{lemma}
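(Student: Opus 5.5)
The statement is Lemma~\ref{lemma Phi}. I would prove it by reducing to the case of two numbers and then passing to finite and countable sums. The only hypothesis is that $t\mapsto \Phi(t)/t$ is decreasing (non-increasing). I would implicitly take $\Phi\ge 0$, or at least $\Phi$ defined on $(0,\infty)$, as for all the functions $\Phi$, $\Psi$ used in the paper.

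\textbf{Finite sums.} Set $S=\sum_{k=1}^N x_k>0$. Since $x_k\le S$ for each $k$, the monotonicity of $\Phi(t)/t$ gives
\begin{equation*}
\frac{\Phi(S)}{S}\le \frac{\Phi(x_k)}{x_k}, \qquad\text{that is}\qquad \frac{x_k}{S}\,\Phi(S)\le \Phi(x_k).
\end{equation*}
Summing over $k=1,\dots,N$ and using $\sum_k x_k/S=1$ yields
\begin{equation*}
\Phi\Bigl(\sum_{k=1}^N x_k\Bigr)\le \sum_{k=1}^N\Phi(x_k).
\end{equation*}
This is the whole argument for finitely many terms.

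\textbf{Countable sums.} If $\sum_k x_k=\infty$ there is nothing to prove, under the convention that $\Phi(\infty)$ is interpreted as a limit, or that the statement is understood only for convergent sums. Otherwise let $S=\sum_k x_k<\infty$. The same trick works directly with infinitely many terms, with no limiting argument needed: each $x_k\le S$, so $x_k\Phi(S)/S\le \Phi(x_k)$ for every $k$. Summing the series gives $\Phi(S)\le\sum_k\Phi(x_k)$, because $\sum_k x_k/S=1$ and all terms are nonnegative when $\Phi\ge0$.

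\textbf{Main obstacle.} The main point of care is the sign of $\Phi$. If $\Phi$ could be negative, the summation over countably many terms would need a convergence justification. In the intended application ($\Phi$ nonnegative and increasing, such as $\Psi^{-1}$-type functions) this issue does not arise. I would therefore state the nonnegativity assumption explicitly and note that it is satisfied wherever the lemma is used later in the paper.
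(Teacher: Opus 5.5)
Your proof is correct. The paper gives no proof of its own and simply cites Hardy--Littlewood--P\'olya; your argument, writing $\Phi(S)=\sum_k \frac{x_k}{S}\Phi(S)$ and using $\frac{\Phi(S)}{S}\le\frac{\Phi(x_k)}{x_k}$ for each $k$, is the standard proof of this classical fact.

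The nonnegativity assumption you add is harmless, since it holds for $\Phi_1$ in Theorem~\ref{thm: weak orlicz fractional}, but it is not needed. For $S<\infty$, the bounds $\Phi(x_k)\ge \frac{x_k}{S}\Phi(S)$ already show that the negative parts of $\Phi(x_k)$ are summable. Hence $\sum_k\Phi(x_k)$ is well defined in $(-\infty,\infty]$, and the comparison goes through.
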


With the preceding results at hand, we can now establish the following theorem, which will be essential for deriving the modular weak-type endpoint estimate stated in Theorem \ref{thm: modular weak}.

\begin{theorem}\label{thm: weak orlicz fractional}
Let $0<\beta\le n$ and $0 \le \alpha < \beta$. Let $B$ be a Young function such that $t \mapsto B(t)/t^{\beta/\alpha}$ is decreasing for all $t>0$. Then, there exists a constant $C=C(n, \beta, \alpha, B )>0$ such that for all $t>0$, the operator $\mathcal{M}_{\alpha,B,\mathcal{H}_\infty^{\beta}}$ satisfies the modular weak-type inequality
\begin{equation*}
	\Phi\left( \mathcal{H}_\infty^{\beta} \big( \{ x \in \mathbb{R}^n : \mathcal{M}_{\alpha,B,\mathcal{H}_\infty^{\beta}} f(x) > t \} \big) \right) \le C \int_{\R^n} B \left( \frac{f(x)}{t} \right) d\mathcal{H}_\infty^{\beta},
\end{equation*}
for all non-negative functions $f \in L^B(\R^n, \mathcal{H}_\infty^{\beta})$. Here, $\Phi$ denotes any function satisfying
\begin{equation*}
	\Phi(s) \le C_1 \Phi_1(s) =
	\begin{cases}
	0 & \text{if } s=0,\\[4pt]
	\dfrac{s}{h_B(s^{\alpha/\beta})} & \text{if } s>0.
	\end{cases}
\end{equation*}
\end{theorem}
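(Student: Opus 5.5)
First I reduce to the dyadic operator. By Corollary~\ref{cor: dyadic orlicz}, $\mathcal{H}^\beta_\infty(\{\mathcal{M}_{\alpha,B,\mathcal{H}^\beta_\infty}f>t\})\le 3^n\mathcal{H}^\beta_\infty(L_{ct})$, where $L_{ct}$ is the superlevel set of the dyadic operator $\mathcal{M}^{\mathcal{D}}_{\alpha,B,\mathcal{H}^{\beta,Q_0}_\infty}$. Since $\Phi_1(s)=s/h_B(s^{\alpha/\beta})$ is increasing, the extra factors $3^n$ and $c$ can be absorbed at the end. For $3^n$ I use submultiplicativity of $h_B$. For $c$ I pass from $B(f/(ct))$ to $B(f/t)$ via $B(st)\le h_B(s)B(t)$. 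So it suffices to prove the inequality for the dyadic operator, and by the convention $\Phi\le C_1\Phi_1$ it suffices to treat $\Phi=\Phi_1$. By monotone convergence (restricting to cubes inside large dyadic cubes) I may assume the relevant maximal dyadic cubes exist.

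Let $\{P_j\}$ be the maximal dyadic cubes with $\ell(P_j)^\alpha\|f\|_{B,P_j,\mathcal{H}^{\beta,Q_0}_\infty}>ct$. They are non-overlapping and cover $L_{ct}$. By the definition of the Luxemburg norm,
\[
\ell(P_j)^\beta<\int_{P_j}B\Bigl(\frac{\ell(P_j)^\alpha f}{ct}\Bigr)\,d\mathcal{H}^{\beta,Q_0}_\infty .
\]
Apply Lemma~\ref{packing2} to $\{P_j\}$ to obtain a subfamily $\{P_{j_k}\}$ satisfying the packing condition and $\mathcal{H}^{\beta,Q_0}_\infty(\cup_j P_j)\le\sum_k\ell(P_{j_k})^\beta$.

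The main difficulty is the factor $\ell(P)^\alpha$ inside $B$, which I handle via $h_B$. Writing $B(\ell^\alpha u)\le h_B(\ell^\alpha)B(u)$ and dividing gives
\[
\frac{\ell(P_{j_k})^\beta}{h_B(\ell(P_{j_k})^\alpha)}\le\int_{P_{j_k}}B\Bigl(\frac{f}{ct}\Bigr)\,d\mathcal{H}^{\beta,Q_0}_\infty .
\]
The left side is exactly $\Phi_1(\ell(P_{j_k})^\beta)$, since $\ell^\alpha=(\ell^\beta)^{\alpha/\beta}$. Summing over $k$ and using the packing estimate \eqref{packingestimate} gives $\sum_k\Phi_1(\ell(P_{j_k})^\beta)\le 2\int_{\R^n}B(f/(ct))\,d\mathcal{H}^{\beta,Q_0}_\infty$. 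Then \eqref{equivalenceofdyadic} lets me return to $\mathcal{H}^\beta_\infty$.

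To finish I need $\Phi_1(\sum_k x_k)\le\sum_k\Phi_1(x_k)$, which is Lemma~\ref{lemma Phi}. Its hypothesis is that $\Phi_1(s)/s=1/h_B(s^{\alpha/\beta})$ is decreasing, and this holds because $h_B$ is increasing. Combined with monotonicity of $\Phi_1$ and $\mathcal{H}^{\beta,Q_0}_\infty(L_{ct})\le\sum_k\ell(P_{j_k})^\beta$, this gives the result.

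The remaining issue is the constant absorption. Here the hypothesis that $B(t)/t^{\beta/\alpha}$ is decreasing is used: it gives $h_B(s)\le s^{\beta/\alpha}$ for $s\ge1$. Hence $\Phi_1$ satisfies $\Phi_1(Ks)\le C_K\Phi_1(s)$, since $\Phi_1(Ks)=Ks/h_B(K^{\alpha/\beta}s^{\alpha/\beta})\ge Ks/(h_B(K^{\alpha/\beta})h_B(s^{\alpha/\beta}))$, which gives a lower bound. For the upper bound, $\Phi_1$ increasing with $\Phi_1(s)/s$ decreasing gives $\Phi_1(Ks)\le K\Phi_1(s)$ for $K\ge1$. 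This lets the $3^n$ and the dyadic constants pass through $\Phi_1$.
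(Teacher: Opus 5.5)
Your proposal is correct and follows essentially the same route as the paper. Both arguments reduce to the dyadic operator via Corollary~\ref{cor: dyadic orlicz}, take maximal dyadic cubes with the Luxemburg norm, pull $\ell(P_j)^\alpha$ out through $h_B$ to obtain $\Phi_1(\ell(P_j)^\beta)$ on the left, and then use Lemma~\ref{packing2}, Lemma~\ref{lemma Phi} and the packing estimate \eqref{packingestimate}. The differences are minor: you absorb $3^n C_\beta$ via $\Phi_1(Ks)\le K\Phi_1(s)$ for $K\ge 1$ where the paper puts the constant inside $h_B$ by submultiplicativity, you check explicitly that $h_B(1/c)\le c^{-\beta/\alpha}<\infty$, and your truncation remark covers the existence of maximal cubes, which the paper takes for granted (the lower-bound computation in your last paragraph is not needed).
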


\begin{remark}
The function $\Phi_1$ is well defined; indeed, by \cite[Lemma 3.12]{Cruz-Uribe-Fiorenza}, since $B(t)/t^{\beta/\alpha}$ is decreasing, it follows that $0 < h_B(s^{\alpha/\beta}) < \infty$ for all $s>0$. Furthermore, the function $\Phi_1$ is increasing and $\Phi_1(t)/t$ is decreasing. Moreover, there exists an invertible function $\Phi$ such that $\Phi(s)\le C_1 \Phi_1(s)$.
\end{remark}

\begin{proof}
Fix $t>0$, and define the sets
\begin{equation*}
E_t = \{ x \in \mathbb{R}^n : \mathcal{M}_{\alpha,B,\mathcal{H}_\infty^{\beta}} f(x) > t \},
\end{equation*}
and
\begin{equation*}
		L_t = \left\lbrace x\in \R^n : \mathcal{M}_{\alpha, B, \mathcal{H}_\infty^{\beta, Q_0}}^{\mathcal{D}} f(x) > t \right\rbrace .
\end{equation*}

By Corollary \ref{cor: dyadic orlicz} we have
\begin{equation*}
	\hcal_\infty^\beta \left( E_t \right)\le 3^n \hcal_\infty^\beta \left( L_{A t} \right),
\end{equation*}	
where $A = \frac{1}{c_{n,\beta} 3^n}$. Using that $\Phi_1$ is increasing and \eqref{equivalenceofdyadic}, we have
\begin{equation*}
\Phi_1(\mathcal{H}_\infty^{\beta}(E_t))
\le \Phi_1(C_{\beta,n} \mathcal{H}_\infty^{\beta,Q_0}(L_{At})).
\end{equation*}
We may write
\begin{equation*}
L_{At} = \bigcup_j P_j,
\end{equation*}
where $\{P_j\}_j$ is the maximal collection of disjoint dyadic cubes contained in $L_{At}$. Observe that each $P_j$ satisfies 
\begin{equation*}
\ell(P_j)^\alpha \|f\|_{B, P_j, \mathcal{H}_\infty^{\beta,Q_0}} > A t.
\end{equation*}
By the definition of the Luxemburg norm, this implies
\begin{equation*}
\|(A t)^{-1} \ell(P_j)^\alpha f\|_{B, P_j, \mathcal{H}_\infty^{\beta,Q_0}} > 1
\end{equation*}
for each $j$. Consequently,
\begin{align*}
1 < {} & \frac{1}{\ell(P_j)^\beta} \int_{P_j} 
B \left( \frac{\ell(P_j)^\alpha f(x)}{A t} \right) d\mathcal{H}_\infty^{\beta,Q_0} \\
\le {} & \frac{h_B(A^{-1}\ell(P_j)^\alpha)}{\ell(P_j)^\beta}
\int_{P_j} B \left( \frac{f(x)}{t} \right) d\mathcal{H}_\infty^{\beta,Q_0} \\
\le {} & h_B(A^{-1}C_\beta^{-\alpha/\beta})
\frac{h_B((C_\beta \ell(P_j)^{\beta})^{\alpha/\beta})}{\ell(P_j)^\beta}
\int_{P_j} B \left( \frac{f(x)}{t} \right) d\mathcal{H}_\infty^{\beta,Q_0} \\
= {} & h_B(A^{-1}C_\beta^{-\alpha/\beta})
\frac{1}{\Phi_1(C_\beta \ell(P_j)^\beta)}
\int_{P_j} B \left( \frac{f(x)}{t} \right) d\mathcal{H}_\infty^{\beta,Q_0},
\end{align*}
where $C_\beta$ is the constant from \eqref{equivalenceofdyadic}. Thus,
\begin{equation}\label{proof orlicz maximal key estimate}
\Phi_1(C_\beta \ell(P_j)^\beta)
< h_B(A^{-1}C_\beta^{-\alpha/\beta})
\int_{P_j} B \left( \frac{f(x)}{t} \right) d\mathcal{H}_\infty^{\beta,Q_0}
\end{equation}
for each $j$.

Invoking Lemma~\ref{packing2}, we obtain a subfamily $\{P_{j_v}\}_v$ of $\{P_j\}$ and a family of non-overlapping ancestors $\tilde{P}_v$ satisfying:

\begin{enumerate}
\item
\begin{equation*}
E_t^k = \bigcup_j P_j \subset \bigcup_v P_{j_v} \cup \bigcup_v \tilde{P}_v.
\end{equation*}
\item
\begin{equation*}
\sum_{P_{j_v}\subset P} \ell(P_{j_v})^\beta \le 2 \ell(P)^\beta,
\quad \text{for each dyadic cube } P \text{ (with respect to } \mathcal{D}_k).
\end{equation*}
\item
\begin{align*}
\mathcal{H}_\infty^{\beta,Q_0} \left( \bigcup_j P_j \right)
&\le \sum_v \ell(P_{j_v})^\beta + \sum_v \ell(\tilde{P}_v)^\beta \\
&\le \sum_v \ell(P_{j_v})^\beta.
\end{align*}
\end{enumerate}

Therefore, combining Lemma~\ref{lemma Phi} and \eqref{proof orlicz maximal key estimate}, we deduce that
\begin{align*}
\Phi_1(C_\beta \mathcal{H}_\infty^{\beta,Q_0}(L_{At}))
&= \Phi_1(C_\beta \mathcal{H}_\infty^{\beta,Q_0}(\cup_j P_j)) \\
&\le \Phi_1 \left( \sum_v C_\beta \ell(P_{j_v})^\beta \right)\\
&\le \sum_v \Phi_1(C_\beta \ell(P_{j_v})^\beta) \\
&\le h_B(A^{-1}C_\beta^{-\alpha/\beta})
\sum_v \int_{P_{j_v}} B \left( \frac{f(x)}{t} \right) d\mathcal{H}_\infty^{\beta,Q_0} \\
&\le 2\, h_B(A^{-1}C_\beta^{-\alpha/\beta})
\int_{\R^n} B \left( \frac{f(x)}{t} \right) d\mathcal{H}_\infty^{\beta,Q_0},
\end{align*}
where the last inequality follows from the packing condition satisfied by $\{P_{j_v}\}_v$ (see \eqref{packingestimate}). We conclude:
\begin{align*}
\Phi_1(\mathcal{H}_\infty^{\beta}(E_t))
&\le \Phi_1(C_{\beta,n} \mathcal{H}_\infty^{\beta,Q_0}(L_{At}))\\
&\le  2\, h_B(A^{-1}C_\beta^{-\alpha/\beta})
\int_{\R^n} B \left( \frac{f(x)}{t} \right) d\mathcal{H}_\infty^{\beta,Q_0} \\
&\le C_\beta  h_B(A^{-1}C_\beta^{-\alpha/\beta})
\int_{\R^n} B \left( \frac{f(x)}{t} \right) d\mathcal{H}_\infty^{\beta}.
\end{align*}
\end{proof}

\section{Riesz potential and Hausdorff content}

In this section, we state some known results concerning the Riesz potential and the fractional maximal function defined with respect to the Hausdorff content. We also prove some auxiliary results that will be needed later in the proof of Theorem \ref{thm: key pointwise}. 

The first result we will need is the boundedness of the Riesz potential $I_\alpha$ in Lebesgue spaces with respect to the Hausdorff content $\mathcal{H}^\beta_\infty$, established in \cite[Theorem 5.4]{harjulehto2024hausdorff}. 

\begin{theorem}\label{HHestimate}
	Let $0<\beta \le n\in \mathbb{N}$, $\alpha \in (0,\beta)$, and $p\in (\tfrac{\beta}{n} , \tfrac{\beta}{\alpha})$ be given. Then, for all measurable functions $f: \R^n \longrightarrow [-\infty, \infty]$, we have
	\begin{equation*}
		\left( \int_{\R^n} |I_\alpha f(x)|^{\frac{\beta p}{\beta-p\alpha}} d\mathcal{H}^\beta_\infty \right) ^ \frac{\beta-p\alpha}{\beta p} \le C \left( \int_{\R^n} |f(x)|^p d\mathcal{H}^\beta_\infty \right)^\frac{1}{p},
	\end{equation*} 
	where $C=C(n, \beta, \alpha, p)>0$. 
\end{theorem}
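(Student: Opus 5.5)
The plan is to prove this by a Hedberg-type pointwise inequality followed by the capacitary boundedness of $\mathcal{M}_{\mathcal{H}^\beta_\infty}$. Assume $f\ge 0$ and $\|f\|_{L^p(\mathcal{H}^\beta_\infty)}<\infty$, and set $q=\frac{\beta p}{\beta-p\alpha}$. The target is the estimate
\begin{equation*}
I_\alpha f(x)\le C\,\mathcal{M}_{\mathcal{H}^\beta_\infty} f(x)^{1-\frac{\alpha p}{\beta}}\,\|f\|_{L^p(\mathcal{H}^\beta_\infty)}^{\frac{\alpha p}{\beta}} \qquad (x\in\R^n).
\end{equation*}
Raising it to the power $q$ and using $q(1-\alpha p/\beta)=p$ gives
\begin{equation*}
\int_{\R^n}(I_\alpha f)^q\,d\mathcal{H}^\beta_\infty\le C\|f\|_{L^p(\mathcal{H}^\beta_\infty)}^{q-p}\int_{\R^n}(\mathcal{M}_{\mathcal{H}^\beta_\infty}f)^p\,d\mathcal{H}^\beta_\infty .
\end{equation*}
I would then invoke the $L^p(\mathcal{H}^\beta_\infty)$-boundedness of $\mathcal{M}_{\mathcal{H}^\beta_\infty}$ for $p>\beta/n$ (Orobitg–Verdera in the dyadic setting and \cite{chen2023capacitary}). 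This is exactly why the lower restriction $p>\beta/n$ appears, and it concludes the proof.

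\textbf{Step 1: two comparison estimates between Lebesgue and capacitary integrals on a ball $B=B(x,r)$.}
\begin{itemize}
\item[(a)] $\int_B |f|\,dy\le C r^{n-\beta}\int_B|f|\,d\mathcal{H}^\beta_\infty$. This follows from $|E\cap B|\le C r^{n-\beta}\mathcal{H}^\beta_\infty(E\cap B)$: in a covering we may replace balls of radius larger than $r$ by $B$ itself, and for the others $r_i^n\le r^{n-\beta}r_i^\beta$. Then integrate over level sets.
\item[(b)] $\int_B|f|\,dy\le C r^{\,n-\beta/p}\bigl(\int_B|f|^p\,d\mathcal{H}^\beta_\infty\bigr)^{1/p}$ for every $p>\beta/n$.
\begin{itemize}
\item For $p\ge1$, apply Hölder's inequality for Lebesgue measure on $B$ and then (a) to $|f|^p$.
\item For $\beta/n<p<1$, set $\gamma=np\in(\beta,n)$. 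Lemma~\ref{alphabetachange} (with $\gamma$ in place of $\alpha$ and $n$ in place of $\beta$) gives $\int_B|f|\,d\mathcal{H}^n_\infty\le C\bigl(\int_B|f|^p\,d\mathcal{H}^{\gamma}_\infty\bigr)^{1/p}$. Since $\gamma>\beta$, the same covering argument as in (a) gives $\mathcal{H}^\gamma_\infty(E\cap B)\le Cr^{\gamma-\beta}\mathcal{H}^\beta_\infty(E\cap B)$. Together these produce the factor $r^{(np-\beta)/p}=r^{n-\beta/p}$.
\end{itemize}
\end{itemize}

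\textbf{Step 2: the pointwise bound.} Fix $r>0$ and split $I_\alpha f(x)$ into dyadic annuli.
\begin{itemize}
\item Near part, the annuli $2^{-k-1}r\le|x-y|<2^{-k}r$ with $k\ge0$. By (a), each annulus contributes at most $C(2^{-k}r)^{\alpha-\beta}\int_{B(x,2^{-k}r)}f\,d\mathcal{H}^\beta_\infty\le C(2^{-k}r)^\alpha\,\mathcal{M}_{\mathcal{H}^\beta_\infty}f(x)$, using that a ball is comparable to a cube containing $x$. Summing the geometric series gives $Cr^\alpha\mathcal{M}_{\mathcal{H}^\beta_\infty}f(x)$.
\item Far part, the annuli $2^{k}r\le|x-y|<2^{k+1}r$ with $k\ge0$. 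By (b), each contributes $C(2^kr)^{\alpha-n}(2^kr)^{n-\beta/p}\|f\|_{L^p(\mathcal{H}^\beta_\infty)}$. This series converges precisely because $p<\beta/\alpha$, and sums to $Cr^{\alpha-\beta/p}\|f\|_{L^p(\mathcal{H}^\beta_\infty)}$.
\end{itemize}
Choosing $r$ with $r^{\beta/p}=\|f\|_{L^p(\mathcal{H}^\beta_\infty)}/\mathcal{M}_{\mathcal{H}^\beta_\infty}f(x)$ balances the two parts and yields the target estimate. The degenerate cases $\mathcal{M}_{\mathcal{H}^\beta_\infty}f(x)\in\{0,\infty\}$ are trivial.

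\textbf{Main obstacle.} I expect the delicate point to be Step 1(b) in the range $\beta/n<p<1$. There Hölder's inequality is unavailable and one has to pass through the intermediate content $\mathcal{H}^{np}_\infty$, checking that the localisation to $B$ in Lemma~\ref{alphabetachange} and the comparison $\mathcal{H}^{np}_\infty\lesssim r^{np-\beta}\mathcal{H}^\beta_\infty$ on subsets of $B$ hold with constants independent of $r$. A secondary point is applying the maximal theorem, which needs $\mathcal{M}_{\mathcal{H}^\beta_\infty}$ bounded on $L^p(\mathcal{H}^\beta_\infty)$ down to $p>\beta/n$, not only for $p>1$. If that range is not available in the form needed, I would instead run the argument for the dyadic maximal operator with respect to $\mathcal{H}^{\beta,Q_0}_\infty$ via \eqref{equivalenceofdyadic}, where the bound follows from the packing Lemma~\ref{packing2}.
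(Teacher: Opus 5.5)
The paper does not prove this statement; it imports it from \cite[Theorem 5.4]{harjulehto2024hausdorff}, so your argument has to stand on its own.

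\textbf{What works.} Your Steps 1 and 2 are correct. This includes Step 1(b) for $p<1$, which you flagged as the main obstacle: the localized comparison $\mathcal{H}^{np}_\infty(E\cap B)\le Cr^{np-\beta}\mathcal{H}^\beta_\infty(E\cap B)$, and Lemma~\ref{alphabetachange} applied to $|f|\chi_B$, both hold with constants independent of $r$. For $1<p<\beta/\alpha$ the proof is complete, since $\mathcal{M}_{\mathcal{H}^\beta_\infty}$ is bounded on $L^p(\mathcal{H}^\beta_\infty)$ for $p>1$ by \cite{chen2023capacitary}.

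\textbf{The gap.} The final step fails when $\beta<n$ and $\beta/n<p\le 1$. The paper actually needs this range: it applies the theorem with $p=1$ in the proof of Theorem~\ref{thm: modular weak}. For $p\le 1$, $\mathcal{M}_{\mathcal{H}^\beta_\infty}$ is \emph{not} bounded on $L^p(\mathcal{H}^\beta_\infty)$. Take $f=\chi_{[0,\varepsilon]^n}$. Then $\mathcal{M}_{\mathcal{H}^\beta_\infty}f(x)\gtrsim(\varepsilon/|x|)^\beta$ for $|x|\ge\varepsilon$, so $\mathcal{H}^\beta_\infty(\{\mathcal{M}_{\mathcal{H}^\beta_\infty}f>\lambda^{1/p}\})\gtrsim\varepsilon^\beta\lambda^{-1/p}$ for small $\lambda$. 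Hence $\int(\mathcal{M}_{\mathcal{H}^\beta_\infty}f)^p\,d\mathcal{H}^\beta_\infty=\infty$ for every $p\le1$, while $\int f^p\,d\mathcal{H}^\beta_\infty\simeq\varepsilon^\beta$. The same example defeats your dyadic fallback: the packing lemma yields weak type $(1,1)$ and strong type $p>1$ for $\mathcal{M}_{\mathcal{H}^{\beta,Q_0}_\infty}$, never strong type below $p=1$. The Orobitg--Verdera theorem you invoke concerns the Lebesgue Hardy--Littlewood maximal operator $M$, not $\mathcal{M}_{\mathcal{H}^\beta_\infty}$.

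\textbf{The fix.} In the near part, do not apply (a). Those annuli contribute at most
\[
C\sum_{k\ge0}(2^{-k}r)^{\alpha-n}\int_{B(x,2^{-k}r)}f\,dy\le Cr^\alpha Mf(x).
\]
Keep the far part unchanged and balance with $r^{\beta/p}=\|f\|_{L^p(\mathcal{H}^\beta_\infty)}/Mf(x)$. This gives
\[
I_\alpha f(x)\le C\,Mf(x)^{1-\frac{\alpha p}{\beta}}\,\|f\|_{L^p(\mathcal{H}^\beta_\infty)}^{\frac{\alpha p}{\beta}}.
\]
Now the boundedness of $M$ on $L^p(\mathcal{H}^\beta_\infty)$ for $p>\beta/n$ \cite{OV} finishes the proof on the whole range.

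Since $Mf\le C\,\mathcal{M}_{\mathcal{H}^\beta_\infty}f$ by your estimate (a), this pointwise bound is stronger than yours and also covers your $p>1$ case. It is the behaviour of $M$, not of $\mathcal{M}_{\mathcal{H}^\beta_\infty}$, that produces the threshold $p>\beta/n$. For the capacitary maximal function the threshold is $p>1$.
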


We will also require the boundedness of the fractional maximal operator $\mathcal{M}_{\alpha, \mathcal{H}^\beta_\infty}$. This follows readily from \cite[Theorem 5.11]{harjulehto2024hausdorff} via the pointwise estimate $\mathcal{M}_{\alpha, \mathcal{H}^\beta_\infty} f (x) \lesssim \mathcal{R}^\beta_\alpha f (x ) $, where $\mathcal{R}^\beta_\alpha f (x ):=  \int_{\R^n} \frac{f(y)}{|x-y|^{n-\alpha  }} \;d \mathcal{H}^\beta_\infty$ denotes the $\beta$-dimensional Riesz potential of order $\alpha$.

\begin{theorem}\label{HHestimate2}
	Let $0<\beta \le n\in \mathbb{N}$, $\alpha \in (0,\beta)$, and $p\in (1 , \tfrac{\beta}{\alpha})$ be given. Then, there exists a constant $C=C(n,\beta ,\alpha, p)>0$ such that 
	\begin{equation*}
		\left( \int_{\R^n} \mathcal{M}_{\alpha, \mathcal{H}^\beta_\infty} f(x)^{\frac{\beta p}{\beta-p\alpha}} d\mathcal{H}^\beta_\infty \right) ^ \frac{\beta-p\alpha}{\beta p} \le C \left( \int_{\R^n} |f(x)|^p d\mathcal{H}^\beta_\infty \right)^\frac{1}{p},
	\end{equation*} 
	 for every function $f: \R^n \longrightarrow [-\infty, \infty]$. 
\end{theorem}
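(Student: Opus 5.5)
We plan to prove the strong-type bound by a Hedberg-type pointwise interpolation between $\mathcal{M}_{\mathcal{H}^\beta_\infty} f$ and $\|f\|_{L^p(\mathcal{H}^\beta_\infty)}$, together with the $L^p(\mathcal{H}^\beta_\infty)$-boundedness, $1<p<\infty$, of the capacitary maximal operator $\mathcal{M}_{\mathcal{H}^\beta_\infty}$ from \cite{chen2023capacitary} (see also \cite{Chen-Spector}). This avoids any interpolation theorem for Choquet integrals. Set $q=\frac{\beta p}{\beta-p\alpha}$ and fix $x$ and a cube $Q\ni x$. Throughout we use $\mathcal{H}^\beta_\infty(Q)\simeq_{\beta,n} \ell(Q)^\beta$.

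\emph{Step 1: two bounds for a single cube.} Trivially,
\[
\frac{\ell(Q)^\alpha}{\mathcal{H}^\beta_\infty(Q)}\int_Q|f|\,d\mathcal{H}^\beta_\infty\le \ell(Q)^\alpha\,\mathcal{M}_{\mathcal{H}^\beta_\infty}f(x).
\]
Next we apply Lemma~\ref{basicChoquet}(iii) with $f_1=|f|\chi_Q$ and $f_2=\chi_Q$. This gives $\int_Q|f|\,d\mathcal{H}^\beta_\infty\le 2\big(\int_Q|f|^p\,d\mathcal{H}^\beta_\infty\big)^{1/p}\mathcal{H}^\beta_\infty(Q)^{1/p'}$, and hence
\[
\frac{\ell(Q)^\alpha}{\mathcal{H}^\beta_\infty(Q)}\int_Q|f|\,d\mathcal{H}^\beta_\infty\le C\,\ell(Q)^{\alpha-\beta/p}\,\|f\|_{L^p(\mathcal{H}^\beta_\infty)}.
\]

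\emph{Step 2: optimise in $\ell(Q)$.} Since $\alpha-\beta/p<0$, the first bound increases in $\ell(Q)$ and the second decreases. Taking the minimum of the two and balancing at $\ell^{\beta/p}=\|f\|_p/\mathcal{M}_{\mathcal{H}^\beta_\infty}f(x)$ yields
\[
\mathcal{M}_{\alpha,\mathcal{H}^\beta_\infty}f(x)\le C\,\big(\mathcal{M}_{\mathcal{H}^\beta_\infty}f(x)\big)^{1-\frac{\alpha p}{\beta}}\,\|f\|_{L^p(\mathcal{H}^\beta_\infty)}^{\frac{\alpha p}{\beta}}.
\]
The degenerate cases $\mathcal{M}_{\mathcal{H}^\beta_\infty}f(x)\in\{0,\infty\}$ and $\|f\|_p=\infty$ are trivial.

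\emph{Step 3: integrate.} We raise the pointwise bound to the power $q$ and note that $q(1-\alpha p/\beta)=p$. Using Lemma~\ref{basicChoquet}(i) to pull out the constant, we get
\[
\int(\mathcal{M}_{\alpha,\mathcal{H}^\beta_\infty}f)^q\,d\mathcal{H}^\beta_\infty\le C\|f\|_p^{\alpha pq/\beta}\int(\mathcal{M}_{\mathcal{H}^\beta_\infty}f)^p\,d\mathcal{H}^\beta_\infty\le C\|f\|_p^{\alpha pq/\beta+p}=C\|f\|_p^{q}.
\]
The last inequality is the $L^p$ bound for $\mathcal{M}_{\mathcal{H}^\beta_\infty}$, which requires $p>1$; this is exactly why the range is $(1,\beta/\alpha)$ rather than the larger range in Theorem~\ref{HHestimate}. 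Taking $q$-th roots gives the claim.

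The only genuinely nontrivial input is the strong-type $L^p$ bound for $\mathcal{M}_{\mathcal{H}^\beta_\infty}$, which we take from \cite{chen2023capacitary}. Everything else is bookkeeping, the main care being in handling constants in the Choquet Hölder inequality. As a cross-check, one can also use the route the text indicates: the pointwise bound $\mathcal{M}_{\alpha,\mathcal{H}^\beta_\infty}f\lesssim\mathcal{R}^\beta_\alpha f$ obtained from $|x-y|\le\sqrt n\,\ell(Q)$ on $Q$, combined with \cite[Theorem 5.11]{harjulehto2024hausdorff}. There one must verify that the kernel normalisation makes the homogeneities match $\ell(Q)^{\alpha-\beta}$; this is the step we expect to need the most care, which is why we prefer the Hedberg argument above.
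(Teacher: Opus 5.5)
Your argument is correct, and it takes a genuinely different route from the paper. The paper proves this theorem in one line. It dominates $\mathcal{M}_{\alpha,\mathcal{H}^\beta_\infty}f$ pointwise by a $\beta$-dimensional Choquet Riesz potential $\mathcal{R}^\beta_\alpha f$, using that on $Q\ni x$ the kernel is bounded below by a power of $\ell(Q)$. It then quotes the $L^p\to L^q$ bound for that potential, \cite[Theorem 5.11]{harjulehto2024hausdorff}.

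You instead prove the Hedberg-type inequality $\mathcal{M}_{\alpha,\mathcal{H}^\beta_\infty}f\lesssim(\mathcal{M}_{\mathcal{H}^\beta_\infty}f)^{1-\alpha p/\beta}\|f\|_{L^p(\mathcal{H}^\beta_\infty)}^{\alpha p/\beta}$, using only the Choquet H\"older inequality and $\mathcal{H}^\beta_\infty(Q)\simeq\ell(Q)^\beta$. You then conclude with the strong $L^p$ bound for $\mathcal{M}_{\mathcal{H}^\beta_\infty}$, $p>1$, from \cite{chen2023capacitary}, which the paper itself uses in the proof of Theorem~\ref{Thm LpLq bound commutator}. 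The exponent bookkeeping ($q(1-\alpha p/\beta)=p$ and $\alpha pq/\beta+p=q$) and your treatment of the degenerate cases are fine.

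The paper's route is shorter and yields a pointwise domination by a potential, but it rests on a deeper theorem. Yours is elementary and self-contained, in the same spirit as the proof of Lemma~\ref{forcompactmorreya}, and it makes transparent exactly where $p>1$ enters.

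Your caution about the normalisation in the cross-check is justified. With the kernel $|x-y|^{\alpha-n}$ as displayed in the paper, the comparison on $Q$ only gives $\ell(Q)^{\alpha-\beta}\int_Q|f|\,d\mathcal{H}^\beta_\infty\lesssim \ell(Q)^{n-\beta}\,\mathcal{R}^\beta_\alpha f(x)$, which is not uniform in $\ell(Q)$ when $\beta<n$. A scaling check also shows that such a potential cannot map $L^p(\mathcal{H}^\beta_\infty)$ to $L^q(\mathcal{H}^\beta_\infty)$ with $\frac{1}{p}-\frac{1}{q}=\frac{\alpha}{\beta}$ unless $\beta=n$. So the paper's reduction has to be read with the kernel $|x-y|^{\alpha-\beta}$; your route avoids the issue entirely.

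One small point: since the statement is for every $f:\R^n\to[-\infty,\infty]$, make sure the version of the capacitary maximal theorem you cite holds without measurability or quasicontinuity assumptions. The paper's route has the analogous dependence on \cite{harjulehto2024hausdorff}.
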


The first auxiliary result is a modification of \cite[Lemma 3.5]{chen2024selfimproving}.

\begin{lemma}\label{forcompactmorreya}
Let $0< \alpha< \beta \leq n \in \mathbb{N}$, let $Q$ be an open cube in $\mathbb{R}^n$ with center $x_0$, and let $f$ be a measurable function defined on $\mathbb{R}^n$. If $\supp(f) \subseteq 2Q$, then there exists a constant $C=C(n,\alpha, \beta)>0$ such that
\begin{align*}
    \int_Q | I_\alpha f |\, d\mathcal{H}^{\beta}_\infty \leq C \ell(Q)^{\alpha} \int_{2Q}|f|\, d\mathcal{H}_\infty^\beta.
\end{align*}

\end{lemma}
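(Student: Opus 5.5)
The plan is to reduce the estimate to a Choquet-integral bound for a truncated kernel via a layer-cake decomposition in the level sets of $f$. Since $I_\alpha f$ and the right-hand side only increase when $f$ is replaced by $|f|$, assume $f\ge 0$ with $\supp f\subseteq 2Q$.

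\emph{Step 1 (pointwise inequality).} Write $f=\int_0^\infty \chi_{E_t}\,dt$ with $E_t=\{f>t\}\subseteq 2Q$. Tonelli gives
\begin{equation*}
I_\alpha f(x)=\int_0^\infty I_\alpha\chi_{E_t}(x)\,dt .
\end{equation*}

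\emph{Step 2 (passing the $t$-integral through).} The Choquet integral is not additive, so we cannot exchange integrals directly. Instead, work with the dyadic content $\mathcal{H}^{\beta,Q'}_\infty$, where $Q'$ is a dyadic cube comparable to $Q$. It is strongly subadditive, so its Choquet integral is countably subadditive. Approximating $\int_0^\infty$ by Riemann sums over dyadic levels $t\in[2^{j},2^{j+1})$ and using monotonicity of $I_\alpha\chi_{E_t}$ in $t$, we obtain
\begin{equation*}
\int_Q I_\alpha f\,d\mathcal{H}^{\beta,Q'}_\infty\le C\sum_j 2^j\int_Q I_\alpha\chi_{E_{2^j}}\,d\mathcal{H}^{\beta,Q'}_\infty .
\end{equation*}
By \eqref{equivalenceofdyadic} we may switch between dyadic and standard contents at the cost of constants. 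This reduces the lemma to the set estimate
\begin{equation*}
\int_Q I_\alpha\chi_E\,d\mathcal{H}^\beta_\infty\le C\,\ell(Q)^\alpha\,\mathcal{H}^\beta_\infty(E),\qquad E\subseteq 2Q .
\end{equation*}
Summing this over $j$ recovers $C\ell(Q)^\alpha\int f\,d\mathcal{H}^\beta_\infty$, since $\sum_j 2^j\mathcal{H}^\beta_\infty(E_{2^j})\simeq\int f\,d\mathcal{H}^\beta_\infty$.

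\emph{Step 3 (the set estimate, main obstacle).} Cover $E$ by balls $B(x_i,r_i)$ with $\sum r_i^\beta\le 2\mathcal{H}^\beta_\infty(E)$.
\begin{itemize}
\item If some $r_i\gtrsim\ell(Q)$, the right-hand side is at least $\ell(Q)^{\alpha+\beta}$. Meanwhile $I_\alpha\chi_{4Q}\lesssim\ell(Q)^\alpha$ on $Q$, so the left-hand side is at most $C\ell(Q)^\alpha\mathcal{H}^\beta_\infty(Q)\simeq\ell(Q)^{\alpha+\beta}$, which settles this case.
\item Otherwise, $\chi_E\le\sum_i\chi_{B_i}$, so by subadditivity (again via the dyadic content) it suffices to show
\begin{equation*}
\int_Q I_\alpha\chi_{B(x_i,r)}\,d\mathcal{H}^\beta_\infty\le C\ell(Q)^\alpha r^\beta .
\end{equation*}
\end{itemize}

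For a single ball we have the pointwise bound
\begin{equation*}
I_\alpha\chi_{B(x_i,r)}(x)\lesssim\min\bigl(r^\alpha,\ r^n|x-x_i|^{\alpha-n}\bigr).
\end{equation*}
Its level set $\{>\lambda\}$ is contained in a ball of radius $(r^n/\lambda)^{1/(n-\alpha)}$, and it is empty for $\lambda\gtrsim r^\alpha$. The layer-cake formula, with $\lambda$ restricted so that this radius is at most $\sqrt n\,\ell(Q)$ and the level set otherwise bounded by $\mathcal{H}^\beta_\infty(Q)$, gives
\begin{equation*}
\int_0^{Cr^\alpha}\min\bigl(\ell(Q)^\beta,\ (r^n/\lambda)^{\beta/(n-\alpha)}\bigr)\,d\lambda .
\end{equation*}
The integrability of $\lambda^{-\beta/(n-\alpha)}$ at $0$ is the delicate point, since it depends on whether $\beta<n-\alpha$. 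We split the integral at $\lambda_0=r^n\ell(Q)^{\alpha-n}$. The lower part contributes $\ell(Q)^\beta\lambda_0\le\ell(Q)^{\alpha+\beta}(r/\ell(Q))^n\le\ell(Q)^\alpha r^\beta$. The upper part contributes $r^{n\beta/(n-\alpha)}\int_{\lambda_0}^{Cr^\alpha}\lambda^{-\beta/(n-\alpha)}\,d\lambda$, which we bound in three cases:
\begin{itemize}
\item If $\beta<n-\alpha$, it is at most $C r^{\alpha+\beta}\le C\ell(Q)^\alpha r^\beta$.
\item If $\beta>n-\alpha$, the integral is dominated by $\lambda_0$, giving $r^{n\beta/(n-\alpha)}\lambda_0^{1-\beta/(n-\alpha)}=r^n\ell(Q)^{\alpha+\beta-n}\le\ell(Q)^\alpha r^\beta$, using $r\le\ell(Q)$ and $n\ge\beta$.
\item If $\beta=n-\alpha$, we get a factor $\log(\ell(Q)/r)$ times $r^{n}$, and $r^n\log(\ell(Q)/r)\lesssim r^\beta\ell(Q)^{n-\beta}$ since $n>\beta$ in this case.
\end{itemize}
I expect this single-ball computation, together with justifying subadditivity through the dyadic content, to be the main technical work.
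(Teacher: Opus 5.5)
Your argument is correct, but it takes a genuinely different route from the paper.

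\textbf{The paper's route.} The proof there is three lines long. It uses the Hedberg-type pointwise bound \eqref{Lemma45X1}, namely $|I_\alpha f(x)|\lesssim \ell(Q)^{\alpha-\eta}\mathcal{M}_\eta f(x)$ for $x\in Q$ and some $\eta\in[0,\alpha)$; this is where $\supp f\subseteq 2Q$ enters. It then applies the capacitary strong-type estimate \eqref{Lemma45X2} for $\mathcal{M}_\eta$ from $L^1(\mathcal{H}^\beta_\infty)$ to $L^{\beta/(\beta-\eta)}(\mathcal{H}^\beta_\infty)$. Finally, H\"older's inequality on $Q$ converts the exponent $\beta/(\beta-\eta)$ into the factor $\mathcal{H}^\beta_\infty(Q)^{\eta/\beta}\simeq\ell(Q)^\eta$.

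\textbf{Your route.} You exploit positivity and linearity in $f$. Three ingredients reduce the lemma to indicator functions of sets $E\subseteq 2Q$: the pointwise bound $I_\alpha f\le\sum_j 2^j I_\alpha\chi_{\{f>2^j\}}$, countable subadditivity of the dyadic Choquet integral, and $\sum_j 2^j\mathcal{H}^\beta_\infty(\{f>2^j\})\le 2\int f\,d\mathcal{H}^\beta_\infty$. An almost optimal ball cover, together with the explicit layer-cake computation for $I_\alpha\chi_{B(x_i,r)}$ with $r\le\ell(Q)$, then finishes the proof. Your three cases $\beta<n-\alpha$, $\beta>n-\alpha$, $\beta=n-\alpha$ check out. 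The large-ball case is correctly handled by the trivial bound $\int_Q I_\alpha\chi_{2Q}\,d\mathcal{H}^\beta_\infty\lesssim\ell(Q)^{\alpha+\beta}$.

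\textbf{Comparison.} The paper's route buys brevity, at the price of a nontrivial external mapping theorem. That theorem is only available at $p=1$ when $\beta<n$: for $\beta=n$ the strong-type bound $L^1\to L^{n/(n-\eta)}$ for $\mathcal{M}_\eta$ is false. So the endpoint $\beta=n$ has to be handled separately, e.g.\ by Fubini. Your route is longer but self-contained. It uses only the size of the kernel and the subadditivity of $\mathcal{H}^{\beta,Q'}_\infty$, it treats all $0<\beta\le n$ uniformly, and it never uses the hypothesis $\alpha<\beta$.
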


\begin{proof}
First, recall that for any $\eta \in [0,\alpha)$, there exists a constant $C_\eta>0$ such that
\begin{align}\label{Lemma45X1}
|I_\alpha \phi(x)| \leq \frac{C_\eta}{\gamma(\alpha)}\ell(Q)^{\alpha-\eta}\mathcal{M}_\eta \phi(x),
\end{align}
which can be found, for instance, in \cite[(3.6)]{chen2024selfimproving}. Additionally, by \cite[Theorem 5.4]{harjulehto2024hausdorff}, there exists a constant $C>0$ such that
\begin{align}\label{Lemma45X2}
\left(\int_{\mathbb{R}^n}(\mathcal{M}_\eta f)^{\frac{\beta}{\beta-\eta}}\, d\mathcal{H}^\beta_\infty\right)^{\frac{\beta-\eta}{\beta}}\leq C\int_{\mathbb{R}^n}|f|\, d\mathcal{H}^\beta_\infty.
\end{align}
The combination of \eqref{Lemma45X1} and \eqref{Lemma45X2} then yields that 
\begin{align*}
\int_Q |I_\alpha f|\, d\mathcal{H}^\beta_\infty &\leq C\ell(Q)^{\alpha - \eta}\int_Q\mathcal{M}_\eta f\, d\mathcal{H}^\beta_\infty \\
&\leq C\ell(Q)^{\alpha - \eta}\left(\int_Q(\mathcal{M}_\eta f)^{\frac{\beta}{\beta-\eta}}\, d\mathcal{H}^\beta_\infty\right)^{\frac{\beta-\eta}{\beta}}\left(\mathcal{H}^\beta_\infty(Q)\right)^{\frac{\eta}{\beta}} \\
&\leq C\ell(Q)^\alpha\int_{2Q}|f|\, d\mathcal{H}^\beta_\infty.
\end{align*}
This completes the proof.
\end{proof}

The second auxiliary result we need is a modification of \cite[Lemma 3.8]{chen2024selfimproving}.

\begin{lemma}\label{outsidef2inMorrey}
    Let $0<\alpha < n \in \mathbb{N}$, $\beta \in (0,n]$, $Q$ be an open cube in $\mathbb{R}^n$ with centre $x_0$ and $f$ be a  measurable function with $\supp{f} \subseteq (2Q)^c$. Then, there exists $c \in \mathbb{R}$, and $C=C(n,\beta,\alpha)>0$ such that 
    \begin{align*}
\frac{1}{\ell(Q)^\beta}\int_{Q} | I_\alpha f -c| \, d\mathcal{H}^\beta_\infty \leq C \sum_{k=0}^\infty 2^{-k} (2^{k+1}\ell(Q))^{\alpha} \frac{1}{( 2^{k+1} \ell( Q))^{\beta  } } \int_{2^{k+1}  Q} |f| d\mathcal{H}_\infty^\beta.     
\end{align*}

\end{lemma}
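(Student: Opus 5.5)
We take $c := I_\alpha f(x_0)$, where $x_0$ is the centre of $Q$; we may assume this is finite, otherwise we adjust by choosing a reference point in $Q$ where $I_\alpha |f|$ is finite. The plan is to bound $|I_\alpha f(x)-I_\alpha f(x_0)|$ pointwise for $x\in Q$ by a quantity independent of $x$. After that, the Choquet integral over $Q$ only costs a factor $\mathcal{H}^\beta_\infty(Q)\simeq \ell(Q)^\beta$, which cancels with the normalisation $\ell(Q)^{-\beta}$.

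\textbf{Step 1: kernel estimate and annular decomposition.} For $x\in Q$ and $y\notin 2Q$ we have $|x-x_0|\le \tfrac{\sqrt n}{2}\ell(Q)$, while $|y-x_0|\gtrsim \ell(Q)$. The mean value theorem therefore gives
\[
\bigl||x-y|^{\alpha-n}-|x_0-y|^{\alpha-n}\bigr|\le C\,\frac{\ell(Q)}{|x_0-y|^{n-\alpha+1}}.
\]
If the geometry for $y$ just outside $2Q$ is not comfortable, we replace $2Q$ by a slightly larger dilate, which only changes constants. Since $\operatorname{supp} f\subseteq(2Q)^c$, we split $(2Q)^c=\bigcup_{k\ge0}(2^{k+1}Q\setminus 2^kQ)$. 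On the $k$-th annulus $|x_0-y|\simeq 2^k\ell(Q)$, so
\[
|I_\alpha f(x)-c|\le C\sum_{k\ge0}2^{-k}\,(2^{k+1}\ell(Q))^{\alpha-n}\int_{2^{k+1}Q}|f|\,dy .
\]

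\textbf{Step 2: passing from Lebesgue to Choquet integrals.} This is the key step. We need the bound
\[
(2^{k+1}\ell(Q))^{-n}\int_{2^{k+1}Q}|f|\,dy\le C\,(2^{k+1}\ell(Q))^{-\beta}\int_{2^{k+1}Q}|f|\,d\mathcal{H}^\beta_\infty .
\]
To prove it, write both integrals by the layer-cake formula. It then suffices to show that for every $E\subseteq P$, where $P$ is a cube, one has $|E|/\ell(P)^n\le C\,\mathcal{H}^\beta_\infty(E)/\ell(P)^\beta$. Take any ball covering $\{B(x_i,r_i)\}$ of $E$. Balls with $r_i\ge\ell(P)$ can be discarded in favour of a single ball of radius $\sim\ell(P)$ covering $P$; for that ball both sides are comparable. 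For the remaining balls, $r_i<\ell(P)$, so
\[
|E|\le \sum_i C r_i^n\le C\,\ell(P)^{n-\beta}\sum_i r_i^\beta .
\]
Taking the infimum over coverings gives the claim. This is the normalised comparison $|E|/|P|\lesssim \mathcal{H}^\beta_\infty(E)/\ell(P)^\beta$, valid for $\beta\le n$.

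\textbf{Step 3: conclusion.} Combining Steps 1 and 2 yields a constant bound $K$ for $|I_\alpha f-c|$ on $Q$. Then
\[
\frac{1}{\ell(Q)^\beta}\int_Q K\,d\mathcal{H}^\beta_\infty\le C K,
\]
which is exactly the stated sum. The main point needing care is Step 2, together with ensuring $c$ is finite. The latter follows whenever the right-hand side is finite, since the same annular estimate bounds $\int |f(y)|\,|x_0-y|^{\alpha-n}\,dy$ by a sum with $2^{-k}$ replaced by $1$, which is finite in the cases of interest. Alternatively, one can compare $x$ with a second point $x'$ and use $I_\alpha f(x)-I_\alpha f(x')$ directly to avoid finiteness issues.
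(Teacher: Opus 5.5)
Your proof is correct and has the same skeleton as the paper's: the choice $c=I_\alpha f(x_0)$, the mean-value bound $\ell(Q)\,|x_0-y|^{\alpha-n-1}$ for the kernel difference, the dyadic annuli, and a final conversion of $\int_{2^{k+1}Q}|f|\,dy$ into a Choquet integral. The genuine difference is Step~2.

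\emph{The paper's conversion.} For $\beta<n$ the paper applies Lemma~\ref{alphabetachange} with the pair $(\beta,n)$ in place of $(\alpha,\beta)$. This gives $\int_{2^{k+1}Q}|f|\,dy\le\frac{n}{\beta}\bigl(\int_{2^{k+1}Q}|f|^{\beta/n}\,d\mathcal{H}^\beta_\infty\bigr)^{n/\beta}$. It then uses the Choquet H\"older inequality of Lemma~\ref{basicChoquet}(iii) with exponent $n/\beta$ to extract the factor $(2^{k+1}\ell(Q))^{n-\beta}$.

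\emph{Your conversion.} You prove the set-function bound $|E|\le C\,\ell(P)^{n-\beta}\mathcal{H}^\beta_\infty(E)$ for $E\subseteq P$ by an immediate covering argument. Either some ball has $r_i\ge\ell(P)$, and then the covering sum already exceeds $\omega_\beta\ell(P)^\beta$, or $r_i^n\le\ell(P)^{n-\beta}r_i^\beta$ for all $i$. You then integrate this bound with the layer-cake formula.

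\emph{Comparison.} Your route is more elementary and self-contained. The dimension-change lemma is a global inequality with a nontrivial proof (going back to Orobitg--Verdera), far stronger than what is needed here, whereas your localized bound is trivial. Also, since the layer-cake formula is linear in the set function, no Choquet--H\"older or quasi-subadditivity constants enter. The paper's version is shorter on the page only because those tools were already assembled in Section~2.

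Two of your side comments are off, though neither affects the main argument.

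\emph{Enlarging $2Q$.} This fallback is not available, since the hypothesis only places $\operatorname{supp} f$ outside $2Q$. It is also unnecessary. For $z\in\overline{Q}$ and $y\notin 2Q$ one has $|z-y|\ge\|z-y\|_\infty\ge\frac12\|y-x_0\|_\infty\ge\frac{1}{2\sqrt n}|y-x_0|$, which is all the mean value theorem needs. The sup-norm is what helps here: the purely Euclidean argument via $|x-x_0|\le\frac{\sqrt n}{2}\ell(Q)$ and $|y-x_0|\ge\ell(Q)$ gives nothing when $n\ge4$.

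\emph{Finiteness of $c$.} Your argument here does not work: finiteness of $\sum_k 2^{-k}a_k$ does not force finiteness of $\sum_k a_k$. Moreover, by the comparability just noted, $I_\alpha|f|$ is finite at every point of $Q$ or at none, so moving the reference point does not help. For instance, take $n=\beta=1$, $\alpha=\frac12$, $Q=(-\frac12,\frac12)$ and $f(y)=|y|^{-1/2}\chi_{\{|y|>1\}}$: the right-hand side is finite while $I_\alpha f\equiv\infty$ on $Q$. The lemma tacitly requires $I_\alpha|f|(x_0)<\infty$, and the paper's proof makes the same tacit assumption.
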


\begin{proof}
	It is enough to prove the result for a cube $Q$ with centre $x_0=0$. We follow the proof of \cite[Lemma 3.8]{chen2024selfimproving}, taking $c=I_\alpha f(0)$ until (3.24),
	\begin{align*}
	\nonumber	\frac{1}{\ell(Q)^\beta} \int_{Q} | I_\alpha f -c|  d\mathcal{H}^\beta_\infty \le & \frac{C}{\ell(Q)^\beta} \int_{Q} \int_{(2Q)^c} \left| \frac{1}{|x-y|^{n-\alpha} } - \frac{1}{|y|^{n-\alpha}} \right| |f(y)|dy  d\mathcal{H}^\beta_\infty(x) \\
\nonumber		\le & \frac{C}{\ell(Q)^\beta} \int_Q |x|  d\mathcal{H}^\beta_\infty(x)  \int_{(2Q)^c} \frac{|f(y)|}{|y|^{n-\alpha+1}} dy\\
\nonumber		\le & \frac{C}{\ell(Q)^\beta} \ell(Q)^{\beta+1} \int_{|y|\ge \ell(Q)} \frac{|f(y)|}{|y|^{n-\alpha+1}} dy\\
\nonumber		= & C \ell(Q) \sum_{k=0}^\infty \int_{2^{k-1}\ell(Q)\le |y|< 2^k  \ell(Q)} \frac{|f(y)|}{|y|^{n-\alpha+1}} dy\\
\nonumber		\le & C \ell(Q) \sum_{k=0}^\infty (2^{k-1}\ell(Q))^{-n+\alpha-1} \int_{ |y|< 2^k  \ell(Q)} |f(y)| dy\\
\nonumber		\le & C  \sum_{k=0}^\infty 2^{-k} (2^{k-1}\ell(Q))^{-n+\alpha} \int_{ 2^{k+1}  Q} |f(y)| dy\\
		= & C'  \sum_{k=0}^\infty 2^{-k} (2^{k+1}\ell(Q))^{-n+\alpha} \int_{ 2^{k+1}  Q} |f(y)| dy
	\end{align*}

	If $\beta=n$, we have concluded the proof. Let us assume $\beta<n$, then using Lemma \ref{alphabetachange},  and Holder inequality (iii) of Lemma \ref{basicChoquet}, we have
	\begin{align*}
		\int_{ 2^{k+1}  Q} |f(y)| dy \le & \frac{n}{\beta} \left( \int_{ 2^{k+1}  Q} |f|^\frac{\beta}{n} d\mathcal{H}_\infty^\beta   \right)^\frac{n}{\beta}\\
		\le &  2\frac{n}{\beta } \left(  \int_{2^{k+1}  Q} |f| d\mathcal{H}_\infty^\beta \right) \ell(2^{k+1}  Q)^{\beta \frac{\frac{n}{\beta}}{(\frac{n}{\beta})'} }\\
		= &2\frac{n}{\beta } ( 2^{k+1} \ell( Q))^{n-\beta  }  \int_{2^{k+1}  Q} |f| d\mathcal{H}_\infty^\beta  .
	\end{align*}
	Hence, 
	\begin{align*}
		\int_{Q} | I_\alpha f -c|  d\mathcal{H}^\beta_\infty \le & C   \sum_{k=0}^\infty 2^{-k} (2^{k+1}\ell(Q))^{-n+\alpha} \int_{ 2^{k+1}  Q} |f(y)| dy\\
		\le & C' \sum_{k=0}^\infty 2^{-k} (2^{k+1}\ell(Q))^{-n+\alpha} ( 2^{k+1} \ell( Q))^{n-\beta  }  \int_{2^{k+1}  Q} |f| d\mathcal{H}_\infty^\beta  \\
		= & C' \sum_{k=0}^\infty 2^{-k} (2^{k+1}\ell(Q))^{\alpha} \frac{1}{( 2^{k+1} \ell( Q))^{\beta  } } \int_{2^{k+1}  Q} |f| d\mathcal{H}_\infty^\beta.
	\end{align*}
\end{proof}

\begin{lemma}\label{lem:Ialpha A1}
    Let $0< \alpha < n$ and $0 \leq \beta < n$. Then 
\begin{align*}
    \mathcal{M}_{\cal{H}_\infty^\beta} (I_\alpha f ) (x) \leq  (\mathcal{M}_{\cal{H}_\infty^\beta} I_\alpha )*  |f| (x) 
\end{align*}
for any $x  \in \mathbb{R}^n$, where $I_\alpha(x)=|x|^{\alpha-n}$. In addition, if $\beta\in (n-\alpha, n]$, we have
\begin{align*}
    \mathcal{M}_{\cal{H}_\infty^\beta} (I_\alpha f ) (x) \leq  C \, I_\alpha ( |f|) (x) 
\end{align*}
for $\mathcal{H}_\infty^\beta$-a.e. $x\in \R^n$. That is, if $\beta\in (n-\alpha, n]$ and $f$ is nonnegative function, then $I_\alpha f\in A_1^\beta$.  
\end{lemma}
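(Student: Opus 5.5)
We prove the first inequality by pulling the maximal operator inside the convolution, and the second by computing the maximal function of the kernel explicitly. Throughout, $f$ may be assumed nonnegative, since $|I_\alpha f|\le I_\alpha(|f|)$ and the monotonicity of the Choquet integral let us replace $f$ by $|f|$.

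\textbf{First inequality.} Fix a cube $Q\ni x$ and write
\[
I_\alpha f(z)=\int_{\R^n} I_\alpha(z-y)\,|f(y)|\,dy .
\]
The key step is a Minkowski-type inequality for the Choquet integral against a Lebesgue integral:
\[
\int_Q \int_{\R^n} I_\alpha(z-y)\,|f(y)|\,dy\, d\mathcal{H}^\beta_\infty(z)\;\le\; C\int_{\R^n} |f(y)| \int_Q I_\alpha(z-y)\, d\mathcal{H}^\beta_\infty(z)\,dy .
\]
We derive this from countable subadditivity of the Choquet integral with respect to the dyadic content $\mathcal{H}^{\beta,Q_0}_\infty$, together with \eqref{equivalenceofdyadic}. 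Approximate the inner $dy$-integral by Riemann-type sums of nonnegative functions, apply the displayed subadditivity of $\int\cdot\, d\mathcal{H}^{\beta,Q_0}_\infty$, and pass to the limit by monotone convergence, which holds for Choquet integrals of increasing sequences. This produces a constant $C_\beta$. The displayed claim in the lemma has constant $1$, which presumably suppresses this $C_\beta$ or is understood up to a constant; we would aim for $\le C$.

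With the Minkowski inequality in hand, we divide by $\mathcal{H}^\beta_\infty(Q)$. Translation invariance of $\mathcal{H}^\beta_\infty$ gives
\[
\frac{1}{\mathcal{H}^\beta_\infty(Q)}\int_Q I_\alpha(z-y)\,d\mathcal{H}^\beta_\infty(z) = \frac{1}{\mathcal{H}^\beta_\infty(Q-y)}\int_{Q-y} I_\alpha\,d\mathcal{H}^\beta_\infty \le \mathcal{M}_{\mathcal{H}^\beta_\infty} I_\alpha(x-y),
\]
since $x-y\in Q-y$. Taking the supremum over $Q$ then yields the convolution bound.

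\textbf{Second inequality.} It suffices to show that $\mathcal{M}_{\mathcal{H}^\beta_\infty} I_\alpha(w)\le C|w|^{\alpha-n}$ for $w\ne0$. Let $Q\ni w$ have side $\ell$. We split into two cases.

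\emph{Case 1: $\ell\le |w|/(2\sqrt n)$.} On $Q$ we have $|z|\ge|w|/2$, so the kernel is bounded by $2^{n-\alpha}|w|^{\alpha-n}$ there, and the average is at most that bound.

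\emph{Case 2: $\ell>|w|/(2\sqrt n)$.} Here $Q\subset B(0,R)$ with $R\simeq \ell$. We estimate the Choquet integral via the layer-cake formula, using $\mathcal{H}^\beta_\infty(\{|z|^{\alpha-n}>t\}\cap Q)\le \min\bigl(C\ell^\beta,\ C t^{-\beta/(n-\alpha)}\bigr)$:
\[
\int_Q |z|^{\alpha-n}\,d\mathcal{H}^\beta_\infty \le C\int_0^{\ell^{\alpha-n}}\ell^\beta\,dt + C\int_{\ell^{\alpha-n}}^\infty t^{-\beta/(n-\alpha)}\,dt .
\]
The second integral converges precisely because $\beta>n-\alpha$. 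Both terms are then $\lesssim \ell^{\beta+\alpha-n}$, so the average is at most $C\ell^{\alpha-n}\le C'|w|^{\alpha-n}$.

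Combining the two cases with the first inequality gives $\mathcal{M}_{\mathcal{H}^\beta_\infty}(I_\alpha f)(x)\le C\, I_\alpha(|f|)(x)$ for every $x$, hence in particular $\mathcal{H}^\beta_\infty$-a.e. The $A_1^\beta$ statement is then immediate from the definition.

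\textbf{Main obstacle.} The hardest step is the Minkowski interchange, because the Choquet integral is not linear. The route through the dyadic content, which is strongly subadditive, together with a monotone-limit approximation of the $dy$-integral should handle it, at the cost of a constant depending on $\beta$.
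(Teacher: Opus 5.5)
Your argument is correct, but it takes a different route from the paper in both halves.

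\emph{First inequality.} The paper never proves a Minkowski-type interchange for the Choquet integral. It uses the lower semicontinuity of $I_\alpha f$ to invoke Adams's duality, replacing $\int_Q I_\alpha f\,d\mathcal{H}^\beta_\infty$ by $\sup\{\int_Q I_\alpha f\,d\mu : \|\mu\|_{\mathcal{M}^\beta(\R^n)}\le 1\}$. It then applies the ordinary Fubini theorem to each such measure, and uses $\mu\lesssim\mathcal{H}^\beta_\infty$ to bound $\int_Q|x-y|^{\alpha-n}\,d\mu(x)$ by $\mathcal{H}^\beta_\infty(Q)\,\mathcal{M}_{\mathcal{H}^\beta_\infty}I_\alpha(z-y)$.

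You instead prove the interchange directly from the countable subadditivity of $\mathcal{H}^{\beta,Q_0}_\infty$, \eqref{equivalenceofdyadic}, monotone convergence and translation invariance. This is more elementary: it uses only tools already stated in Section 2, with no Frostman-type duality and no semicontinuity. The price is a constant depending on $\beta$. That is harmless, since only the second inequality, which carries a constant anyway, is used later; moreover, Adams's duality is itself only an equivalence up to constants.

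When you write out the approximation, use lower sums such as $F_m(z)=\sum_{A\in\mathcal{D}_m}\bigl(\int_A|f|\,dy\bigr)\inf_{y'\in A}|z-y'|^{\alpha-n}$. These increase to $I_\alpha(|f|)(z)$. Since $\inf_{y'\in A}|z-y'|^{\alpha-n}\le|z-y|^{\alpha-n}$ for every $y\in A$, subadditivity gives $\int_Q F_m\,d\mathcal{H}^{\beta,Q_0}_\infty\le\int|f(y)|\int_Q|z-y|^{\alpha-n}\,d\mathcal{H}^{\beta,Q_0}_\infty(z)\,dy$. Upper sums would not work, because they are infinite on $A$ due to the singularity.

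\emph{Second inequality.} The paper simply cites \cite[Proposition 2.14]{HZZ} for $\mathcal{M}_{\mathcal{H}^\beta_\infty}I_\alpha\le C\,I_\alpha$ almost everywhere. Your two-case computation proves this directly and shows exactly where $\beta>n-\alpha$ is needed. It also gives the bound at every $w\neq 0$ rather than almost everywhere. That everywhere version is in fact what the proof of Theorem~\ref{thm: key pointwise} uses when it evaluates at the fixed centre $x=0$.
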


\begin{proof}
    We may assume without loss of generality that $f$ is nonnegative. Fix $z \in \mathbb{R}^n$ and a cube $Q$ containing $z$. Since $I_\alpha f$ is lower semicontinuous, we may apply \cite[Corollary on p. 118]{AdamsChoquet}. In particular, recalling the Morrey space
\begin{align*}
    \mathcal{M}^{\beta}( \mathbb{R}^n):= \left\{ \mu \in M_{loc}( \mathbb{R}^n): \|\mu\|_{\mathcal{M}^\beta ( \mathbb{R}^n)} := \sup_{x \in  \mathbb{R}^n,r>0} \frac{|\mu|(B(x,r) )}{r^\beta}<\infty \right\},
\end{align*}
we obtain
\begin{align}\label{fubiniestimate1}
  \nonumber  \frac{1}{\cal{H}_\infty^\beta(Q)} \int_Q I_\alpha f (x) \, d \cal{H}_\infty^\beta & = \sup_{\|\mu\|_{\mathcal{M}^\beta ( \mathbb{R}^n)}\le 1} \frac{1}{\cal{H}_\infty^\beta(Q)} \int_Q I_\alpha f (x) \, d \mu (x )\\
    &=  \sup_{\|\mu\|_{\mathcal{M}^\beta ( \mathbb{R}^n)}\le 1} \frac{1}{\cal{H}_\infty^\beta(Q)} \int_Q    \int_{\mathbb{R}^n} \frac{f(y)}{|x-y|^{n-\alpha }}\,dy   \, d \mu (x )\\
     &=  \sup_{\|\mu\|_{\mathcal{M}^\beta ( \mathbb{R}^n)}\le 1} \frac{1}{\cal{H}_\infty^\beta(Q)} \int_Q    \int_{\mathbb{R}^n} \frac{f(y)}{|x-y|^{n-\alpha }}\,dy   \, d \mu (x )\\
     & = \sup_{\|\mu\|_{\mathcal{M}^\beta ( \mathbb{R}^n)}\le 1} \int_{\mathbb{R}^n} f(y) \left(  \frac{1}{\cal{H}_\infty^\beta(Q)}  \int_{Q} \frac{d \mu (x )}{|x-y|^{n-\alpha }}   \right)\,dy .
\end{align}
where we have used Fubini's theorem. As $\mu(E) \leq \mathcal{H}^\beta_\infty (E)$ for any measurable set $E\subseteq \mathbb{R}^n$, we have 
\begin{align}\label{fubiniestimate2}
    \nonumber  \frac{1}{\cal{H}_\infty^\beta(Q)}  \int_{Q} \frac{d \mu (x )}{|x-y|^{n-\alpha }}   &\leq \frac{1}{\cal{H}_\infty^\beta(Q)}  \int_{Q} \frac{1}{|x-y|^{n-\alpha }}  \, d \mathcal{H}^\beta_\infty (x)\\
    & \leq \mathcal{M}_{\mathcal{H}^\beta_\infty} (I_\alpha ) (z-y).
\end{align}
The combination of (\ref{fubiniestimate1}) and (\ref{fubiniestimate2}) then yields that 
\begin{align*}
    \frac{1}{\cal{H}_\infty^\beta(Q)} \int_Q I_\alpha f (x) \, d \cal{H}_\infty^\beta(x) &\leq \int_{\mathbb{R}^n } f(y )  \, \mathcal{M}_{\mathcal{H}^\beta_\infty} (I_\alpha ) (z-y) \, dy \\
    & =  (\mathcal{M}_{\cal{H}_\infty^\beta} I_\alpha )*  f (z).
\end{align*}
Taking the supremum over all such cubes $Q$, we obtain 
\begin{align*}
    \mathcal{M}_{\cal{H}_\infty^\beta} (I_\alpha f ) (z) \leq  (\mathcal{M}_{\cal{H}_\infty^\beta} I_\alpha )*  f (z).
\end{align*}
To conclude the proof, we use the fact (see \cite[Proposition 2.14]{HZZ}) that if 
$\beta \in (n-\alpha, n]$, then $I_\alpha$ is an $A_1^\beta$ weight; that is,
\begin{equation*}
    \mathcal{M}_{\mathcal{H}_\infty^\beta} I_\alpha(x) \le C\, I_\alpha(x)
\end{equation*}
for $\mathcal{H}_\infty^\beta$-a.e. $x\in \R^n$. This, together with the previous estimate, completes the proof. 
\end{proof}

We now state the reverse Hölder inequality from \cite[Theorem 1.7]{HZZ} in the particular case of $A_1^\beta$ weights, which will be needed in what follows.

\begin{theorem}\label{thm: RHI A_1}
    Let $0<\beta\le n$ and $w\in A_1^\beta$ (that is, $\mathcal{M}_{\mathcal{H}_\infty^\beta} w(x)\le C w(x)$ for $\mathcal{H}_\infty^\beta$-a.e. $x$). There exist $C>0$ and $r>0$ such that 
    \begin{equation*}
        \left( \frac{1}{\ell(Q)^\beta }\int_Q w^{1+r} \, d\hcal_\infty^\beta \right)^\frac{1}{1+r} \le C \frac{1}{\ell(Q)^\beta} \int_Q w \, d\hcal_\infty^\beta 
    \end{equation*}
    for every cube $Q$.
\end{theorem}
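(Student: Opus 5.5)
The plan is a Gehring-type argument run with the dyadic content $\mathcal{H}^{\beta,Q}_\infty$ in place of $\mathcal{H}^\beta_\infty$; by \eqref{equivalenceofdyadic} the two are interchangeable up to constants depending only on $\beta$. Fix a cube $Q$ and write $w_Q:=\ell(Q)^{-\beta}\int_Q w\,d\mathcal{H}^{\beta,Q}_\infty$. Replacing $w$ by $\min(w,N)$ keeps the $A_1^\beta$ condition with a comparable constant, because $\mathcal{M}_{\mathcal{H}^\beta_\infty}\min(w,N)\le\min(\mathcal{M}_{\mathcal{H}^\beta_\infty}w,N)$. So we may assume $w$ is bounded, prove the estimate uniformly in $N$, and let $N\to\infty$. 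This guarantees that every quantity to be absorbed below is finite.

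\textbf{Step 1 (stopping cubes).} For $\lambda> w_Q$, let $\{Q_j\}$ be the maximal dyadic subcubes of $Q$ in $\mathcal{D}(Q)$ with $\ell(Q_j)^{-\beta}\int_{Q_j}w\,d\mathcal{H}^{\beta,Q}_\infty>\lambda$, and set $E_\lambda=\bigcup_jQ_j$. By maximality and monotonicity of the content, $\lambda<\ell(Q_j)^{-\beta}\int_{Q_j}w\le 2^\beta\lambda$. The $A_1^\beta$ hypothesis gives $w\ge c\,\ell(Q_j)^{-\beta}\int_{Q_j}w>c\lambda$ $\mathcal{H}^\beta_\infty$-a.e.\ on $Q_j$, so $E_\lambda\subset\{w>c\lambda\}$ up to a null set. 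Conversely, $\{x\in Q: w>\lambda\}\subset E_\lambda$ up to an $\mathcal{H}^\beta_\infty$-null set. This follows from a Lebesgue differentiation theorem for the dyadic capacitary averages, which in turn follows from the weak-type bound for $\mathcal{M}_{\mathcal{H}^{\beta,Q}_\infty}$ of \cite{Chen-Spector} and density of continuous functions.

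\textbf{Step 2 (the Gehring inequality).} Apply Lemma~\ref{packing2} to $\{Q_j\}$ and obtain the subfamily $\{Q_{j_k}\}$ with the packing condition. Together with \eqref{packingestimate} this gives
\begin{align*}
\int_{\{w>\lambda\}\cap Q}w\,d\mathcal{H}^{\beta,Q}_\infty\le\int_{E_\lambda}w\,d\mathcal{H}^{\beta,Q}_\infty
\lesssim\sum_k\int_{Q_{j_k}}w\,d\mathcal{H}^{\beta,Q}_\infty\le 2^\beta\lambda\sum_k\ell(Q_{j_k})^\beta\lesssim\lambda\,\mathcal{H}^{\beta,Q}_\infty(E_\lambda).
\end{align*}
The first inequality uses the covering property (1) of Lemma~\ref{packing2}: the ancestors $\tilde Q_k$ have controlled total size by property (3), and the Choquet integral over a union is bounded by a constant times the sum of the integrals by strong subadditivity. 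Using $E_\lambda\subset\{w>c\lambda\}$ from Step~1, we conclude
\begin{equation*}
\int_{\{w>\lambda\}\cap Q}w\,d\mathcal{H}^{\beta,Q}_\infty\le C\lambda\,\mathcal{H}^{\beta,Q}_\infty(\{x\in Q:w>c\lambda\}),\qquad \lambda>w_Q .
\end{equation*}

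\textbf{Step 3 (integration in $\lambda$ and absorption).} Multiply by $\lambda^{r-1}$ and integrate over $\lambda\in(w_Q,\infty)$. On the left, write the Choquet integral by the layer-cake formula, $\int_{\{w>\lambda\}}w=\lambda\mathcal{H}(\{w>\lambda\})+\int_\lambda^\infty\mathcal{H}(\{w>s\})\,ds$, and apply Tonelli in $(s,\lambda)$. This produces roughly $\frac1r\int_Q w^{1+r}\,d\mathcal{H}^{\beta,Q}_\infty$ minus a term controlled by $w_Q^r\int_Qw$. The right side is at most $C c^{-1-r}\int_Q w^{1+r}\,d\mathcal{H}^{\beta,Q}_\infty$, using that $\int w^{1+r}\,d\mathcal{H}=(1+r)\int_0^\infty\lambda^r\mathcal{H}(\{w>\lambda\})\,d\lambda$. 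The left-side coefficient $1/r$ beats the right-side coefficient $C$ once $r$ is small. Since $w$ is bounded the integral is finite, so we can absorb and obtain $\int_Qw^{1+r}\lesssim w_Q^r\int_Qw\simeq\ell(Q)^\beta w_Q^{1+r}$, which is the claim. The range $\lambda\le w_Q$ is handled trivially.

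The main obstacle is Step~3 together with the covering step in Step~2. The Choquet integral is only quasi-additive, so each Fubini/layer-cake manipulation costs constants. I must check that these losses multiply only the right-hand side (which is harmless) and never the crucial $1/r$ gain on the left. I would arrange this by performing all manipulations with the strongly subadditive dyadic content $\mathcal{H}^{\beta,Q}_\infty$ and passing to $\mathcal{H}^\beta_\infty$ only at the very end.
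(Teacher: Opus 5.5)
Your argument is correct in substance, and it takes a different route from the paper. The paper gives no proof of this statement: it quotes the reverse H\"older inequality from \cite[Theorem 1.7]{HZZ}, specialised to $A_1^\beta$.

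\textbf{Comparison.} Your Gehring-type argument is self-contained and uses only tools already in the paper: the dyadic content $\mathcal{H}^{\beta,Q}_\infty$, stopping cubes, Lemma~\ref{packing2}, and the layer-cake definition of the Choquet integral. It also uses the $A_1^\beta$ hypothesis in its strongest form. The pointwise bound of $w$ from below by its average on each stopping cube gives $E_\lambda\subset\{w>c\lambda\}$ at once. That suffices here, since the paper only applies the result to the $A_1^\beta$ weight $I_\alpha f$. The citation covers a wider class of weights, but as a black box.

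Your Step~3 has no constant losses. With $\phi(s)=\mathcal{H}^{\beta,Q}_\infty(\{x\in Q:w>s\})$, the identities $\int_{\{w>\lambda\}\cap Q}w\,d\mathcal{H}^{\beta,Q}_\infty=\lambda\phi(\lambda)+\int_\lambda^\infty\phi$ and $\int_Q w^{1+r}\,d\mathcal{H}^{\beta,Q}_\infty=(1+r)\int_0^\infty\lambda^r\phi(\lambda)\,d\lambda$ are exact by definition. Tonelli acts only on Lebesgue integrals in $(s,\lambda)$, so the worry in your last paragraph does not arise. Likewise, the limit $N\to\infty$ is just monotone convergence in the level variable.

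\textbf{Two justifications need repair.}

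In Step~2, ``controlled total size'' of the ancestors $\tilde Q_k$ does not by itself bound $\int_{\tilde Q_k\cap E_\lambda}w$. What you need is that every dyadic cube $R$ containing a stopping cube satisfies $\int_{R\cap E_\lambda}w\,d\mathcal{H}^{\beta,Q}_\infty\le 2^\beta\lambda\,\ell(R)^\beta$. If $R\subseteq Q$, this follows from maximality of the stopping cubes. If $R\supsetneq Q$, it follows from $\int_Q w=w_Q\ell(Q)^\beta<\lambda\ell(Q)^\beta$. With this, properties (1) and (3) of Lemma~\ref{packing2} and countable subadditivity give $\int_{E_\lambda}w\le 2^{\beta+2}\lambda\,\mathcal{H}^{\beta,Q}_\infty(E_\lambda)$. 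The packing property (2) is not needed. You can even bypass the lemma: a near-optimal dyadic cover of $E_\lambda$ may be taken to consist of cubes each containing a stopping cube, because $\mathcal{H}^{\beta,Q}_\infty(Q_j)=\ell(Q_j)^\beta$.

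In Step~1, do not derive $\{x\in Q:w>\lambda\}\subset E_\lambda$ a.e.\ from density of continuous functions. That density is only available for quasicontinuous functions (cf.\ Remark~\ref{rem: extension}), and nothing in the hypotheses makes $w$ quasicontinuous. You only need the one-sided statement, and it holds for every $w\ge 0$:
\begin{itemize}
\item Fix $E\subseteq Q$, $\epsilon<1$ and $\delta>0$. Let $F$ be the set of $x\in E$ such that every dyadic $R\ni x$ with $\ell(R)\le\delta$ has $\mathcal{H}^{\beta,Q}_\infty(E\cap R)<\epsilon\,\ell(R)^\beta$.
\item Fix a dyadic $R_0$ with $\ell(R_0)\le\delta$. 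To compute $\mathcal{H}^{\beta,Q}_\infty(F\cap R_0)$ it suffices to use covers by dyadic $R\subseteq R_0$ that meet $F$. Subadditivity then gives $\mathcal{H}^{\beta,Q}_\infty(F\cap R_0)\le\epsilon\,\mathcal{H}^{\beta,Q}_\infty(F\cap R_0)$, so $F\cap R_0$, and hence $F$, is null.
\item Apply this to $E=\{x\in Q:w>\mu\}$ for rational $\mu>\lambda$, with $\epsilon\mu>\lambda$. Then $\int_R w\ge\mu\,\mathcal{H}^{\beta,Q}_\infty(E\cap R)$ puts a.e.\ point of $\{w>\lambda\}\cap Q$ in a small dyadic cube with average $>\lambda$, hence in $E_\lambda$.
\end{itemize}
Alternatively, cite \cite[Theorem 2.9]{ChenClaros}, as the paper does in Section~6.
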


\section{Key pointwise estimate}

In this section, we present the proof of Theorem \ref{thm: key pointwise}, which will play a central role in establishing the remaining main results of the paper.

\begin{proof}[Proof of Theorem \ref{thm: key pointwise}]
By homogeneity, it suffices to establish \eqref{key pointwise estimate} for $x=0$. Fix a cube $Q$ centered at the origin. We must show that there exists a constant $c\in \R$ such that
\begin{equation*}
	\frac{1}{\ell(Q)^\beta} \int_Q |[b,I_\alpha]f-c|d\mathcal{H}_\infty^\beta \le C \| b\|_{\operatorname{BMO}^\beta (\R^n) } \left( \mathcal{M}_{\mathcal{H}_\infty^\beta} ((I_\alpha f)^s) (0)^\frac{1}{s} + \mathcal{M}_{\alpha, B, \mathcal{H}_\infty^\beta} f(0) \right)
\end{equation*}
for all $s>1$. 

We decompose $f=f_1+f_2$, where $f_1=f\chi_{Q^*}$ and $Q^*=2Q$. Since $b_{Q^*}$ is constant, we observe that $[b,I_\alpha]f = [b-b_{Q^*},I_\alpha]f$. Consequently, for a constant $c_Q\in \R$ to be chosen later, we have
\begin{align*}
	\frac{1}{\ell(Q)^\beta} \int_Q |[b,I_\alpha]f-c_Q|d\mathcal{H}_\infty^\beta   = & \frac{1}{\ell(Q)^\beta} \int_Q |[b-b_{Q^*},I_\alpha]f-c_Q|d\mathcal{H}_\infty^\beta \\
	= & \frac{1}{\ell(Q)^\beta} \int_Q | (b(x)-b_{Q^*})I_\alpha f(x) - I_\alpha ((b-b_{Q^*})f)(x)  -c_Q|d\mathcal{H}_\infty^\beta \\
	\le & \frac{1}{\ell(Q)^\beta} \int_Q | (b(x)-b_{Q^*})I_\alpha f(x) |d\mathcal{H}_\infty^\beta \\
	& + \frac{1}{\ell(Q)^\beta} \int_Q |  I_\alpha ((b-b_{Q^*})f_1)(x)|d\mathcal{H}_\infty^\beta \\
	& + \frac{1}{\ell(Q)^\beta} \int_Q |I_\alpha ((b-b_{Q^*})f_2)(x)  + c_Q|d\mathcal{H}_\infty^\beta \\
	= & I_1 + I_2 + I_3.
\end{align*}

We estimate each term separately. Let $s>1$. By applying Hölder's inequality (iii) from Theorem \ref{basicChoquet} and Theorem \ref{thm: JN p}, we obtain
\begin{align}\label{estimateI10}
\nonumber	I_1 \le & 2 \left( \frac{1}{\ell(Q)^\beta} \int_Q | b(x)-b_{Q^*} |^{s'} d\mathcal{H}_\infty^\beta \right)^\frac{1}{s'} \left( \frac{1}{\ell(Q)^\beta} \int_Q |I_\alpha f(x) |^s d\mathcal{H}_\infty^\beta  \right)^\frac{1}{s}\\
\nonumber	\le & 2^{1+\frac{\beta}{s'}} \left( \frac{1}{\ell(Q^*)^\beta} \int_{Q^*} | b(x)-b_{Q^*} |^{s'} d\mathcal{H}_\infty^\beta \right)^\frac{1}{s'} \left( \frac{1}{\ell(Q)^\beta} \int_Q |I_\alpha f(x) |^s d\mathcal{H}_\infty^\beta  \right)^\frac{1}{s}\\
\nonumber	\le & 2^{1+\frac{\beta}{s'}} C s' \| b\|_{\operatorname{BMO}^\beta (\R^n) }\left( \frac{1}{\ell(Q)^\beta} \int_Q |I_\alpha f(x) |^s d\mathcal{H}_\infty^\beta  \right)^\frac{1}{s}\\
	\le & 2^{1+\frac{\beta}{s'}} C s' \| b\|_{\operatorname{BMO}^\beta (\R^n) }\mathcal{M}_{\mathcal{H}_\infty^\beta} ((I_\alpha f)^s) (0)^\frac{1}{s}.
\end{align}

To estimate $I_2$, we apply Lemma \ref{forcompactmorreya} to the function $(b-b_{Q^*})f_1$, followed by Hölder's inequality \eqref{Holder Orlicz} with $B(t)=t\log(e+t)$:
\begin{align*}
	I_2 \le & C \ell(Q^*)^\alpha \frac{1}{\ell(Q^*)^\beta}\int_{Q^*} |(b-b_{Q^*})f_1| d\mathcal{H}_\infty^\beta \\
	 = & C \ell(Q^*)^\alpha \frac{1}{\ell(Q^*)^\beta}\int_{Q^*} |(b-b_{Q^*})f| d\mathcal{H}_\infty^\beta \\
	 \le & C_\beta \ell(Q^*)^\alpha  \| f \|_{B, Q^*, \mathcal{H}_\infty^\beta}  \| b-b_{Q^*} \|_{\bar{B}, Q^*, \mathcal{H}_\infty^\beta}\\
	 \le &  C_\beta  \mathcal{M}_{\alpha, B, \mathcal{H}_\infty^\beta} f(0)  \| b-b_{Q^*} \|_{\bar{B}, Q^*, \mathcal{H}_\infty^\beta}\\
	 \le & C_\beta  \mathcal{M}_{\alpha, B, \mathcal{H}_\infty^\beta} f(0) \| b\|_{\operatorname{BMO}^\beta (\R^n) },
\end{align*}
where in the last inequality we have used Remark \ref{rem: B complementary}, Theorem \ref{exp BMO} and Remark \ref{rem exp BMO}. 

We now choose $c_Q= -c$, where $c\in \R$ is the constant obtained by applying Lemma \ref{outsidef2inMorrey} to the function $(b-b_{Q^*})f_2$. Using Lemma \ref{outsidef2inMorrey}, we have
\begin{align*}
	I_3 = & \frac{1}{\ell(Q)^\beta} \int_Q |I_\alpha ((b-b_{Q^*})f_2)(x)  -c|d\mathcal{H}_\infty^\beta\\
	\le & C \sum_{k=0}^\infty 2^{-k} (2^{k+1}\ell(Q))^{\alpha} \frac{1}{( 2^{k+1} \ell( Q))^{\beta  } } \int_{2^{k+1}  Q} |(b-b_{Q^*})f_2| d\mathcal{H}_\infty^\beta\\
	= &  C \sum_{k=0}^\infty 2^{-k} (2^{k+1}\ell(Q))^{\alpha} \frac{1}{( 2^{k+1} \ell( Q))^{\beta  } } \int_{2^{k+1}  Q} |(b-b_{2^{k+1}Q})f_2 + (b_{2^{k+1}Q} -b_{Q^*})f_2 | d\mathcal{H}_\infty^\beta\\
	\le & C_\beta \sum_{k=0}^\infty 2^{-k} (2^{k+1}\ell(Q))^{\alpha} \frac{1}{( 2^{k+1} \ell( Q))^{\beta  } } \int_{2^{k+1}  Q} |b-b_{2^{k+1}Q}||f|  d\mathcal{H}_\infty^\beta\\
	& + C_\beta \sum_{k=0}^\infty 2^{-k}  |b_{2^{k+1}Q} -b_{Q^*} | (2^{k+1}\ell(Q))^{\alpha} \frac{1}{( 2^{k+1} \ell( Q))^{\beta  } } \int_{2^{k+1}  Q} |f|  d\mathcal{H}_\infty^\beta\\
	= & I_{31} + I_{32}.
\end{align*}

To estimate $I_{31}$, we repeat the argument used for $I_2$:
\begin{align*}
	I_{31} \le & C_\beta \sum_{k=0}^\infty 2^{-k} (2^{k+1}\ell(Q))^{\alpha} \| f \|_{B, 2^{k+1}  Q, \mathcal{H}_\infty^\beta}  \| b-b_{2^{k+1}  Q} \|_{\bar{B}, 2^{k+1}  Q, \mathcal{H}_\infty^\beta}\\
	\le & C_\beta \mathcal{M}_{\alpha, B, \mathcal{H}_\infty^\beta} f(0)\| b\|_{\operatorname{BMO}^\beta (\R^n) }\sum_{k=0}^\infty 2^{-k} \\
	= & C_\beta \mathcal{M}_{\alpha, B, \mathcal{H}_\infty^\beta} f(0)\| b\|_{\operatorname{BMO}^\beta (\R^n) }.
\end{align*}

To estimate $I_{32}$, we invoke Lemma \ref{bmo cQ} and Remark \ref{remark Holder}:
\begin{align*}
	I_{32} = &  C_\beta \sum_{k=0}^\infty 2^{-k}  |b_{2^{k}Q^*} -b_{Q^*} | (2^{k+1}\ell(Q))^{\alpha} \frac{1}{( 2^{k+1} \ell( Q))^{\beta  } } \int_{2^{k+1}  Q} |f|  d\mathcal{H}_\infty^\beta\\
	\le & C_\beta \sum_{k=0}^\infty 2^{-k}  k \| b\|_{\operatorname{BMO}^\beta (\R^n) } (2^{k+1}\ell(Q))^{\alpha} \frac{1}{( 2^{k+1} \ell( Q))^{\beta  } } \int_{2^{k+1}  Q} |f|  d\mathcal{H}_\infty^\beta\\
	\le & C_\beta \| b\|_{\operatorname{BMO}^\beta (\R^n) }  \mathcal{M} _{\alpha, \mathcal{H}_\infty^\beta} f(0) \sum_{k=0}^\infty 2^{-k}  k  \\
	\le & C_\beta \| b\|_{\operatorname{BMO}^\beta (\R^n) }  \mathcal{M} _{\alpha, B,  \mathcal{H}_\infty^\beta} f(0).
\end{align*}

This concludes the proof of the first statement. Now, assume that $\beta\in (n-\alpha, n]$. Hence, by Lemma \ref{lem:Ialpha A1} we have $I_\alpha f\in A_1^\beta$. Returning to \eqref{estimateI10}, we choose $s>1$ to be the exponent provided by the reverse Hölder inequality (Theorem \ref{thm: RHI A_1}). We then obtain
\begin{align*}
   I_1 \le & 2^{1+\frac{\beta}{s'}} C s' \| b\|_{\operatorname{BMO}^\beta (\R^n) }\left( \frac{1}{\ell(Q)^\beta} \int_Q |I_\alpha f(x) |^s d\mathcal{H}_\infty^\beta  \right)^\frac{1}{s}\\ 
    \le & 2^{1+\frac{\beta}{s'}} C s' \| b\|_{\operatorname{BMO}^\beta (\R^n) } \frac{1}{\ell(Q)^\beta} \int_Q |I_\alpha f(x) | d\mathcal{H}_\infty^\beta  \\
    \le & 2^{1+\frac{\beta}{s'}} C s' \| b\|_{\operatorname{BMO}^\beta (\R^n) }I_\alpha f(0),
\end{align*}
where in the last inequality we have used Corollary \ref{lem:Ialpha A1} again. This establishes \eqref{key pointwise estimate 2}.
\end{proof}

\begin{remark}\label{Rmk pointwise proof}
    We indicate how the argument in the proof of Theorem \ref{thm: key pointwise} also yields the pointwise estimate stated in Remark \ref{Rmk pointwise}, without appealing to the Orlicz fractional maximal function. Using the notation of the proof of Theorem \ref{thm: key pointwise}, it suffices to revisit the terms $I_2$ and $I_3$. Fix $t>1$. Replacing the use of Lemma \ref{lem: Holder orlicz} by the H\"older inequality stated in Theorem \ref{basicChoquet} (iii) with exponent $t$, and combining it with Theorem \ref{thm: JN p}, we obtain
\begin{equation*}
    I_2+I_3\ \lesssim\ \|b\|_{\mathrm{BMO}^\beta(\R^n)}\,
 M_{\alpha t,\mathcal{H}^\beta_\infty}(f^t)(x)^{1/t}.
\end{equation*}
Together with the estimate for $I_1$ already established in the proof of Theorem \ref{thm: key pointwise}, this yields that for every $s,t>1$,
\begin{equation*}
  	\mathcal{M}^\#_\beta ( [b, I_\alpha] f )(x) \lesssim  \| b\|_{\operatorname{BMO}^\beta (\R^n) } \left( \mathcal{M}_{\mathcal{H}_\infty^\beta} ((I_\alpha f)^s) (x)^\frac{1}{s} + \mathcal{M}_{\alpha t, \mathcal{H}_\infty^\beta} (f^t)(x)^\frac{1}{t} \right),
\end{equation*}
which is precisely the desired bound.
\end{remark}

\begin{remark}\label{rem:iteratedcommutators}
The previous proof can be extended naturally to iterated commutators. For $m\in \mathbb{N}$ and $b$ locally integrable, we define
\begin{equation*}
(I_\alpha)_b^1 f:=[b,I_\alpha]f,\qquad (I_\alpha)_b^{m}f:=\big[b,(I_\alpha)_b^{m-1}\big]f,\quad m\ge 2.
\end{equation*}
Equivalently,
\begin{equation*}
(I_\alpha)_b^m f(x)
=\int_{\R^n} \frac{(b(x)-b(y))^m}{|x-y|^{n-\alpha}}\, f(y)\,dy.
\end{equation*}
In particular, $(I_\alpha)_b^m$ is invariant under the addition of constants to $b$. The proof of Theorem~\ref{thm: key pointwise} adapts to $(I_\alpha)_b^m$ by using the binomial expansion
\begin{equation*}
(I_\alpha)_b^m f(x)
=\sum_{j=0}^m \binom{m}{j}(-1)^j\, b(x)^{m-j}\, I_\alpha\big(b^j f\big)(x),
\end{equation*}
applied with $b_0:=b-b_{Q^*}$, and treating each summand as in the case $m=1$ after splitting $f=f_1+f_2$ where $f_1=f\chi_{Q^*}$. This yields the analogue of \eqref{key pointwise estimate} with the natural dependence $\|b\|_{\operatorname{BMO}^\beta(\R^n)}^{m}$ and with the Young function $B$ replaced by $B_m(t):=t(\log(e+t))^m$. Consequently, the corresponding analogues of Theorem~\ref{Thm LpLq bound commutator} and Theorem~\ref{thm: modular weak} follow by the same argument. We do not pursue these extensions here.
\end{remark}

\section{Characterization of $\operatorname{BMO}^\beta$ in terms of commutator of fractional integrals}

In this section, we present the proof of Theorem \ref{thm: characterization BMO commutator}. For clarity, we divide the characterization into two separate results: first, we show that the condition $b \in \operatorname{BMO}^\beta$ is sufficient for the boundedness of the commutator $[b, I_\alpha]$, and later we prove that this condition is also necessary.

\begin{theorem}\label{Thm LpLq bound commutator}
	Let $0< \beta \leq n$, $0<\alpha < \beta$ and let $b$ be a measurable function. For each $1<p<\frac{\beta}{\alpha}$, we define the exponent $q$ by
	\begin{equation*}
		\frac{1}{p}-\frac{1}{q}= \frac{\alpha}{\beta}.
	\end{equation*}
	
	If $b\in \operatorname{BMO}^\beta (\R^n)$, then there exists $C=C(\alpha, \beta, n, p)>0$ such that 
	\begin{equation}\label{LpLq bound commutator}
		\left( \int_{\R^n} \left| [b, I_\alpha] f \right|^q d\mathcal{H}_\infty^\beta \right)^\frac{1}{q} \le C \|b\|_{\operatorname{BMO}^\beta} \left( \int_{\R^n} \left|f \right|^p d\mathcal{H}_\infty^\beta \right)^\frac{1}{p},
	\end{equation}
	for every bounded measurable function $f
    $ with compact support.
\end{theorem}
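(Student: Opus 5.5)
The plan follows the classical Chanillo scheme adapted to Hausdorff content: the Fefferman--Stein inequality (Theorem \ref{thm: beta FS}) together with the pointwise bound of Theorem \ref{thm: key pointwise}. Fix $1<p<\beta/\alpha$ and a bounded compactly supported $f$. Since $[b,I_\alpha]f=[b,I_\alpha]f^+-[b,I_\alpha]f^-$, Lemma \ref{basicChoquet}(ii) lets us assume $f\ge0$. We also first reduce to bounded $b$ by truncation. Replace $b$ by $b_k$ from Lemma \ref{truncatedBMO}, which satisfies $\|b_k\|_{\operatorname{BMO}^\beta}\le C_\beta\|b\|_{\operatorname{BMO}^\beta}$. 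At the end we pass to the limit $k\to\infty$: $b_kI_\alpha f\to bI_\alpha f$ pointwise, $I_\alpha(b_kf)\to I_\alpha(bf)$ by dominated convergence (since $f$ is bounded with compact support and $b$ is locally integrable), and we conclude by a Fatou-type argument for the Choquet integral, using monotone convergence of the level sets after passing to $\liminf$.

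For bounded $b$ and $f$, we use $|g|\le \mathcal{M}_{\mathcal{H}^\beta_\infty}g$ $\mathcal{H}^\beta_\infty$-a.e. (Lebesgue differentiation for Hausdorff content) with $g=[b,I_\alpha]f$. Then Theorem \ref{thm: beta FS} with exponent $q$ gives
\begin{equation*}
\|[b,I_\alpha]f\|_{L^q(\mathcal{H}^\beta_\infty)}\lesssim \|\mathcal{M}^\#_\beta([b,I_\alpha]f)\|_{L^q(\mathcal{H}^\beta_\infty)}.
\end{equation*}
Next apply the variant in Remark \ref{Rmk pointwise}, which avoids the Orlicz operator and is more convenient for strong bounds. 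Choose $1<s<q$ and $1<t<p$, and set $\tilde q$ by $1/(p/t)-1/\tilde q=\alpha t/\beta$, so that $\tilde q=q/t$. The term $\mathcal{M}_{\mathcal{H}^\beta_\infty}((I_\alpha f)^s)^{1/s}$ is controlled in $L^q$ by the boundedness of $\mathcal{M}_{\mathcal{H}^\beta_\infty}$ on $L^{q/s}(\mathcal{H}^\beta_\infty)$ (from \cite{chen2023capacitary}, since $q/s>1$), followed by Theorem \ref{HHestimate}. The term $\mathcal{M}_{\alpha t,\mathcal{H}^\beta_\infty}(f^t)^{1/t}$ is controlled by Theorem \ref{HHestimate2} applied to $f^t\in L^{p/t}$ with fractional order $\alpha t$. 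This requires $1<p/t<\beta/(\alpha t)$, which holds, and the output exponent is exactly $q/t$. Combining, both terms are bounded by $C\|f\|_{L^p(\mathcal{H}^\beta_\infty)}$, and the pointwise bound carries the factor $\|b\|_{\operatorname{BMO}^\beta}$.

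The main obstacle is verifying hypothesis \eqref{newassumptionThm1.2} of Theorem \ref{thm: beta FS} for $g=[b_k,I_\alpha]f$ with some $p_0<q$. The plan is to use $|g|\le 2k\,I_\alpha f$, so that
\begin{equation*}
\mathcal{M}_{\mathcal{H}^\beta_\infty}g\le 2k\,\mathcal{M}_{\mathcal{H}^\beta_\infty}(I_\alpha f)\le 2k\,\mathcal{M}_{\mathcal{H}^\beta_\infty}\big((I_\alpha f)^s\big)^{1/s}.
\end{equation*}
The right-hand side lies in $L^q(\mathcal{H}^\beta_\infty)$ by the previous step, and Chebyshev's inequality then gives $\sup_t t^q\,\mathcal{H}^\beta_\infty(\{\mathcal{M}_{\mathcal{H}^\beta_\infty}g>t\})<\infty$. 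Taking $p_0$ slightly less than $q$, the quantity $t^{p_0}\mathcal{H}^\beta_\infty(\cdot)$ is bounded for $t\le N$. Local integrability of $g$ with respect to $\mathcal{H}^\beta_\infty$ follows in the same way. This is where the truncation is essential, since for unbounded $b$ the a priori finiteness could fail. The remaining care point is that the resulting constant must not depend on $k$, which holds because $k$ only enters the qualitative hypothesis, not the final inequality.
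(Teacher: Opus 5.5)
Your architecture matches the paper's: truncation via Lemma~\ref{truncatedBMO}, Lebesgue differentiation plus Theorem~\ref{thm: beta FS} at exponent $q$, the Orlicz-free bound of Remark~\ref{Rmk pointwise}, the maximal theorem on $L^{q/s}$ followed by Theorem~\ref{HHestimate}, Theorem~\ref{HHestimate2} with order $\alpha t$ on $L^{p/t}$, and a Fatou limit in $k$. Your restriction $1<t<p$ is the correct one, since Theorem~\ref{HHestimate2} needs $p/t>1$.

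However, the step you single out as the main obstacle, verifying \eqref{newassumptionThm1.2}, fails as written. From $\mathcal{M}_{\mathcal{H}^\beta_\infty}g\in L^q(\mathcal{H}^\beta_\infty)$, Chebyshev gives $\mathcal{H}^\beta_\infty(\{\mathcal{M}_{\mathcal{H}^\beta_\infty}g>\lambda\})\le A\lambda^{-q}$, and hence
\[
\lambda^{p_0}\,\mathcal{H}^\beta_\infty(\{\mathcal{M}_{\mathcal{H}^\beta_\infty}g>\lambda\})\le A\,\lambda^{p_0-q}.
\]
For $p_0<q$ this is unbounded as $\lambda\to0^+$. The supremum in \eqref{newassumptionThm1.2} runs over $0<\lambda\le N$, so it is exactly the small-$\lambda$ regime that matters. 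Your weak-$L^q$ bound therefore yields the hypothesis only for $p_0\ge q$. In that case Theorem~\ref{thm: beta FS} gives the inequality only for exponents strictly larger than $p_0$, and not at $q$.

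The repair is to run your domination at a lower exponent, which is what the paper does. Since $f$ is bounded with compact support, $f\in L^{p_0}(\mathcal{H}^\beta_\infty)$ for every $p_0\in(1,p)$. Defining $q_0$ by $1/p_0-1/q_0=\alpha/\beta$ gives $1<q_0<q$. Using your bound $|g|\le 2k\,I_\alpha f$, the boundedness of $\mathcal{M}_{\mathcal{H}^\beta_\infty}$ on $L^{q_0}$, and Theorem~\ref{HHestimate}, you get
\[
\|\mathcal{M}_{\mathcal{H}^\beta_\infty}g\|_{L^{q_0}(\mathcal{H}^\beta_\infty)}\lesssim k\,\|I_\alpha f\|_{L^{q_0}(\mathcal{H}^\beta_\infty)}\lesssim k\,\|f\|_{L^{p_0}(\mathcal{H}^\beta_\infty)}<\infty .
\]
Chebyshev now gives \eqref{newassumptionThm1.2} with exponent $q_0<q$, and also the local integrability of $g$. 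No detour through $(I_\alpha f)^s$ is needed here. The paper obtains $[b_k,I_\alpha]f\in L^{q_0}$ by applying Theorem~\ref{HHestimate} to $b_kf$ and to $f$ separately.

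A smaller point concerns the limit $k\to\infty$. Dominated convergence for $I_\alpha(b_kf)(x)$ needs $|b|\,|f|\,|x-\cdot|^{\alpha-n}\in L^1$, and local integrability of $b$ alone does not give this. Use instead that $b\in\operatorname{BMO}^\beta\subset\operatorname{BMO}$, so $b\in L^r_{\mathrm{loc}}$ for every $r<\infty$. Hölder with $r>n/\alpha$ then supplies a majorant at every $x$. With that fix, your everywhere-pointwise convergence is slightly cleaner than the paper's route through norm convergence and an a.e.\ convergent subsequence.
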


\begin{proof}

	Let $b\in \operatorname{BMO}^\beta (\R^n)$ and let $f$ be a bounded measurable function with compact support. We may assume without loss of generality that $\|b\|_{\operatorname{BMO}^\beta}\neq 0$. For $k>0$, we define the truncated function $b_k$ as in Lemma \ref{truncatedBMO}, hence 
    \begin{equation*}
        \|b_k\|_{\operatorname{BMO}^\beta} \le C_\beta \|b\|_{\operatorname{BMO}^\beta}.
    \end{equation*}

    Fix some $p_0\in (1,p)$, and define $q_0$ by the relation $\tfrac{1}{p_0}-\tfrac{1}{q_0}= \tfrac{\alpha}{\beta}$. Since $f$ has compact support and $b_k$ is bounded, $\|b_kf\|_{L^{p_0}(\mathcal{H}^{\beta}_{\infty})}<\infty$ and by Theorem \ref{HHestimate} we have $\|I_\alpha (b_kf)\|_{L^{q_0}(\mathcal{H}^{\beta}_{\infty})}<\infty$. Also, since $\|f\|_{L^{p_0}(\mathcal{H}^{\beta}_{\infty})}<\infty$, again by Theorem \ref{HHestimate} we get $\|I_\alpha f\|_{L^{q_0}(\mathcal{H}^{\beta}_{\infty})}<\infty$ and hence $\|b_k I_\alpha f\|_{L^{q_0}(\mathcal{H}^{\beta}_{\infty})}<\infty$. Therefore, $[b_k, I_\alpha ]f$ is well defined and we have 
    \begin{equation*}
       \|[b_k, I_\alpha ]f\|_{L^{q_0}(\mathcal{H}^{\beta}_{\infty})}  = \|b_k I_\alpha f-I_\alpha (b_k f)\|_{L^{q_0}(\mathcal{H}^{\beta}_{\infty})} <\infty. 
    \end{equation*}
    In particular, the weak-type estimate \eqref{newassumptionThm1.2} needed to apply Theorem \ref{thm: beta FS} is satisfied by $[b_k, I_\alpha ]f$. 

    Using Lebesgue differentiation theorem \cite[Theorem 2.9]{ChenClaros} and Theorem \ref{thm: beta FS}, we deduce that there exists a constant $C>0 $ depending only on $q,$ $n$, $\beta$ such that
	\begin{align*}
		\left( \int_{\R^n} \left| [b_k, I_\alpha] f \right|^q d\mathcal{H}_\infty^\beta \right)^\frac{1}{q} \le & \left( \int_{\R^n} \left|\mathcal{M}_{\mathcal{H}_\infty^\beta }( [b_k, I_\alpha] f) \right|^q d\mathcal{H}_\infty^\beta \right)^\frac{1}{q} \\
		\le & C \left( \int_{\R^n} \left|\mathcal{M}_{\beta }^\# ( [b_k, I_\alpha] f) \right|^q d\mathcal{H}_\infty^\beta \right)^\frac{1}{q}.
	\end{align*}
    Now, by using the pointwise inequality given in Theorem \ref{thm: key pointwise} (concretely the inequality given in Remark \ref{Rmk pointwise}) with $1<\lambda, t<q$, we obtain 
    \begin{align*}
        \left( \int_{\R^n} \left|\mathcal{M}_{\beta }^\# ( [b_k, I_\alpha] f) \right|^q d\mathcal{H}_\infty^\beta \right)^\frac{1}{q} 
        &\le C \|b_k\|_{\operatorname{BMO}^\beta} \left( \int_{\R^n} \left| \mathcal{M}_{\mathcal{H}_\infty^\beta} ((I_\alpha f)^s) (x)  \right|^\frac{q}{s} d\mathcal{H}_\infty^\beta \right)^\frac{1}{q}\\
        &\quad +C \|b_k\|_{\operatorname{BMO}^\beta} \left( \int_{\R^n} \left| \mathcal{M}_{\alpha t, \mathcal{H}_\infty^\beta} (f^t)(x) \right|^\frac{q}{t} d\mathcal{H}_\infty^\beta \right)^\frac{1}{q}\\
        &\le C \|b\|_{\operatorname{BMO}^\beta} \left( \int_{\R^n} \left| \mathcal{M}_{\mathcal{H}_\infty^\beta} ((I_\alpha f)^s) (x)  \right|^\frac{q}{s} d\mathcal{H}_\infty^\beta \right)^\frac{1}{q}\\
        &\quad +C \|b\|_{\operatorname{BMO}^\beta} \left( \int_{\R^n} \left| \mathcal{M}_{\alpha t, \mathcal{H}_\infty^\beta} (f^t)(x) \right|^\frac{q}{t} d\mathcal{H}_\infty^\beta \right)^\frac{1}{q}\\
        &= I_1+ I_2.
    \end{align*}

We will estimate each term separately. Firstly, (choosing $s $ such that $\frac{q}{s} >1$), using the boundedness of $\mathcal{M}_{\mathcal{H}_\infty^\beta}$ proved in \cite[Theorem 1.2]{chen2023capacitary}, we have
\begin{align*}
    I_1 &\le C \|b\|_{\operatorname{BMO}^\beta} \left( \int_{\R^n} \left| I_\alpha f \right|^q d\mathcal{H}_\infty^\beta \right)^\frac{1}{q}\\
    &\le C \|b\|_{\operatorname{BMO}^\beta} \left( \int_{\R^n} \left|f \right|^p d\mathcal{H}_\infty^\beta \right)^\frac{1}{p}.
\end{align*}
where in the last inequality we have used Theorem \ref{HHestimate}. We estimate the second term using Theorem \ref{HHestimate2}, where we use the relation
    \begin{equation*}
        \frac{t}{p}-\frac{t}{q}=\frac{t\alpha}{\beta},
    \end{equation*}
    therefore
\begin{align*}
I_2 =& C \|b_k\|_{\operatorname{BMO}^\beta} \left( \int_{\R^n} \left| \mathcal{M}_{\alpha t, \mathcal{H}_\infty^\beta} (f^t)(x) \right|^\frac{q}{t} d\mathcal{H}_\infty^\beta \right)^{\frac{t}{q} \frac{1}{t}}\\
\le& C \|b\|_{\operatorname{BMO}^\beta} \left( \int_{\R^n} \left|f \right|^{t \frac{p}{t}} d\mathcal{H}_\infty^\beta \right)^{\frac{t}{p} \frac{1}{t}}\\
=& C \|b\|_{\operatorname{BMO}^\beta} \left( \int_{\R^n} \left|f \right|^p d\mathcal{H}_\infty^\beta \right)^\frac{1}{p}.
\end{align*}

Hence, we have proved 
\begin{align*}
    \left( \int_{\R^n} \left| [b_k, I_\alpha] f \right|^q d\mathcal{H}_\infty^\beta \right)^\frac{1}{q} \le C \|b\|_{\operatorname{BMO}^\beta} \left( \int_{\R^n} \left|f \right|^p d\mathcal{H}_\infty^\beta \right)^\frac{1}{p},
\end{align*}
for every $k>0$ with constant $C$ independent on $k$. 

To conclude the proof for the general symbol $b$ we take the limit as $k\to \infty$. Since $f$ is bounded and has compact support, let us denote $K=\supp f$. By the monotone convergence theorem, we have
\begin{equation*}
    \left( \int_{\R^n} |b_k(x)f(x) - b(x)f(x)|^s  \, d\mathcal{H}_\infty^\beta \right)^{\frac{1}{s}} =\left( \int_{K} |b_k(x) - b(x)|^s |f(x)|^s \, d\mathcal{H}_\infty^\beta \right)^{\frac{1}{s}} \to 0,
\end{equation*}
as $k\to \infty$. Applying Theorem \ref{HHestimate},
\begin{align*}
    \left( \int_{\R^n} |I_\alpha (b_k f)(x) - I_\alpha (bf)(x)|^r  \, d\mathcal{H}_\infty^\beta \right)^{\frac{1}{r}} = & \left( \int_{\R^n} |I_\alpha (b_k f- bf)(x)|^r  \, d\mathcal{H}_\infty^\beta \right)^{\frac{1}{r}}\\
    \le & C \left( \int_{\R^n} |b_k(x)f(x) - b(x)f(x)|^s  \, d\mathcal{H}_\infty^\beta \right)^{\frac{1}{s}}\to 0,
\end{align*}
as $k\to \infty$, where $r$ is the corresponding exponent given by Theorem \ref{HHestimate}. This norm convergence implies that there exists a subsequence \(\{b_{k_j}\}_{j\in\mathbb{N}}\) such that
\(b_{k_j}\to b\) and $I_\alpha (b_{k_j} f ) \to I_\alpha ( b f )$ $\mathcal{H}^{\beta}_{\infty}$-a.e. (see, for example, \cite[Proposition 2.5]{PS_2023}). Consequently, \([b_{k_j},I_\alpha]f(x) \to [b,I_\alpha]f(x)\) for \(\mathcal{H}^{\beta}_{\infty}\)-a.e. $x\in \R^n$. Therefore, by Fatou's lemma, we conclude
\begin{align*}
\left(\int_{\mathbb{R}^n} |[b,I_\alpha]f|^{q}\, d\mathcal{H}^{\beta}_{\infty}\right)^{\frac{1}{q}}
&=
\left(\int_{\mathbb{R}^n} \lim_{j\to\infty} \Bigl|[b_{k_j},I_\alpha]f\Bigr|^{q}\,
d\mathcal{H}^{\beta}_{\infty}\right)^{\frac{1}{q}} \\
&\le
\liminf_{j\to\infty}
\left(\int_{\mathbb{R}^n} |[b_{k_j},I_\alpha]f|^{q}\, d\mathcal{H}^{\beta}_{\infty}\right)^{\frac{1}{q}} \\
&\le
C\,\liminf_{j\to\infty}\|b\|_{\mathrm{BMO}^{\beta}(\mathbb{R}^n)}
\left(\int_{\mathbb{R}^n} |f|^{p}\, d\mathcal{H}^{\beta}_{\infty}\right)^{\frac{1}{p}} \\
&=
C\,\|b\|_{\mathrm{BMO}^{\beta}(\mathbb{R}^n)}
\left(\int_{\mathbb{R}^n} |f|^{p}\, d\mathcal{H}^{\beta}_{\infty}\right)^{\frac{1}{p}}.
\end{align*}
\end{proof}

We now turn to the converse implication. In this part, we show that the boundedness of the commutator forces the symbol $b$ to belong to $\operatorname{BMO}^\beta$. This establishes the necessity of the condition and completes the characterization.

\begin{theorem}\label{Thm b BMO}
	Let $0<\beta \le n$, $0<\alpha <\beta$ and let $b\in L^1_{loc}(\R^n)$. For each $1<p<\frac{\beta}{\alpha}$, we define the exponent $q$ by
	\begin{equation*}
		\frac{1}{p}-\frac{1}{q}= \frac{\alpha}{\beta}.
	\end{equation*}
	
	If the commutator $[b,I_\alpha]$ satisfies the inequality
	\begin{equation}\label{LpLq bound commutator 2}
		\left( \int_{\R^n} \left| [b, I_\alpha] f \right|^q d\mathcal{H}_\infty^\beta \right)^\frac{1}{q} \le C  \left( \int_{\R^n} \left|f \right|^p d\mathcal{H}_\infty^\beta \right)^\frac{1}{p},
	\end{equation}
	for some $C>0$ and for every bounded measurable function $f
    $ with compact support, then $b\in \operatorname{BMO}^\beta (\R^n)$. Specifically, if we denote by $\| [b,I_\alpha]\|_{(p,q)}$ denote the best possible constant in the inequality \eqref{LpLq bound commutator 2}, there exists constant $C=C(\alpha,\beta, n)>0$ such that $\|b\|_{\operatorname{BMO}^\beta (\R^n)} \le C \| [b,I_\alpha]\|_{(p,q)}$.
\end{theorem}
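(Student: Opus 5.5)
I will adapt Janson's classical argument (the one used in Chanillo's necessity proof), which expands the kernel $|z|^{n-\alpha}$ locally in absolutely convergent Fourier series. The argument proceeds in four steps.

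\textbf{Step 1 (setup).} Choose $z_0\in\R^n$ with $|z_0|=3\sqrt n$, say, and $\delta>0$ small. On the ball $B(z_0,2\delta\sqrt n)$, which stays away from the origin, the function $|z|^{n-\alpha}$ is smooth. Hence it admits an absolutely convergent Fourier expansion
\begin{equation*}
|z|^{n-\alpha}=\sum_m a_m e^{i v_m\cdot z},\qquad \sum_m|a_m|<\infty ,
\end{equation*}
valid on that ball. Fix a cube $Q=Q(x_0,r)$ and set $y_0=x_0-z_0 r$ and $Q'=Q(y_0,r)$. For $x\in Q$ and $y\in Q'$, the point $(x-y)/r$ lies in the region of the expansion, and $|x-y|\simeq r$ there.

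\textbf{Step 2 (representing the oscillation through the commutator).} Let $s(x)=\overline{\operatorname{sgn}}(b(x)-b_{Q'})$, where $b_{Q'}$ is the (Lebesgue) median or average of $b$ over $Q'$. Then
\begin{align*}
\int_Q |b(x)-b_{Q'}|\,dx
&\lesssim \frac1{|Q'|}\int_Q\int_{Q'} (b(x)-b(y))\,s(x)\,dy\,dx\\
&= r^{\alpha-n}\frac{r^{n-\alpha}}{|Q'|}\int_Q\int_{Q'}\frac{b(x)-b(y)}{|x-y|^{n-\alpha}}\,\Big|\frac{x-y}{r}\Big|^{n-\alpha} s(x)\,dy\,dx .
\end{align*}
Inserting the Fourier series gives
\begin{equation*}
\int_Q |b(x)-b_{Q'}|\,dx\lesssim r^{\alpha-n}\sum_m|a_m|\int_Q \big|[b,I_\alpha]f_m(x)\big|\,dx,
\qquad f_m=\chi_{Q'}e^{-iv_m\cdot y/r}.
\end{equation*}
Each $f_m$ is bounded with compact support. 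Real and imaginary parts are handled separately, using the linearity of the commutator.

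\textbf{Step 3 (passing to the Hausdorff content).} The left-hand side must be converted into the $\beta$-dimensional quantity, and the right-hand side bounded through \eqref{LpLq bound commutator 2}. Rather than passing through Lebesgue measure, a cleaner route is to run Step 2 directly with Choquet integrals in $x$. Write
\begin{equation*}
b(x)-c=\frac1{|Q'|}\int_{Q'}(b(x)-b(y))\,dy+\big(b_{Q'}^{\rm Leb}-c\big),
\end{equation*}
and take $c=b_{Q'}^{\rm Leb}$. Then, pointwise for $x\in Q$,
\begin{equation*}
|b(x)-c|=\frac{r^{n-\alpha}}{|Q'|}\Big|\sum_m a_m\,[b,I_\alpha]f_m(x)\,e^{iv_m\cdot x/r}\Big|\le C r^{-\alpha}\sum_m|a_m|\,\big|[b,I_\alpha]f_m(x)\big| .
\end{equation*}
This needs no sign function, because the absolute value is taken after integrating in $y$. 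Integrating with respect to $\mathcal{H}^\beta_\infty$ over $Q$ requires countable subadditivity of the Choquet integral. This holds up to a constant for the dyadic content $\mathcal{H}^{\beta,Q_0}_\infty$ by strong subadditivity, and \eqref{equivalenceofdyadic} transfers it to $\mathcal{H}^\beta_\infty$.

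\textbf{Step 4 (applying the hypothesis).} For each term, Hölder's inequality (Lemma~\ref{basicChoquet}(iii)) with exponent $q$ and \eqref{LpLq bound commutator 2} give
\begin{equation*}
\int_Q|[b,I_\alpha]f_m|\,d\mathcal{H}^\beta_\infty\le 2\,\ell(Q)^{\beta(1-1/q)}\,\|[b,I_\alpha]\|_{(p,q)}\,\mathcal{H}^\beta_\infty(Q')^{1/p}\lesssim r^{\beta-\beta/q+\beta/p}\,\|[b,I_\alpha]\|_{(p,q)} .
\end{equation*}
Since $1/p-1/q=\alpha/\beta$, the exponent equals $\beta+\alpha$. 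Multiplying by $r^{-\alpha}\sum_m|a_m|$ and dividing by $\ell(Q)^\beta$ then yields
\begin{equation*}
\|b\|_{\operatorname{BMO}^\beta}\lesssim \|[b,I_\alpha]\|_{(p,q)},
\end{equation*}
uniformly in $Q$.

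\textbf{Main obstacle.} The delicate point is the interchange of the infinite Fourier sum with the Choquet integral, i.e.\ the countable subadditivity used in Step 3, with constants independent of $Q$. A secondary issue is verifying that the pointwise representation holds for $\mathcal{H}^\beta_\infty$-a.e.\ $x$ and not merely Lebesgue-a.e. For this, $b\in L^1_{loc}$ ensures that $[b,I_\alpha]f_m(x)$ is defined by the absolutely convergent integral for every $x\in Q$, since $|x-y|\simeq r$ on the support. Hence the identity is genuinely pointwise.
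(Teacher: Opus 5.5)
Your proposal is correct and is essentially the paper's proof. Both run the Janson-type argument: a translated cube, the Lebesgue average of $b$ over it as the constant, the local absolutely convergent Fourier expansion of $|z|^{n-\alpha}$, interchange of the series with the Choquet integral via subadditivity, then H\"older plus \eqref{LpLq bound commutator 2} with the exponent count $\beta-\beta/q+\beta/p=\beta+\alpha$. Your justification of countable subadditivity through $\mathcal{H}^{\beta,Q_0}_\infty$ and \eqref{equivalenceofdyadic}, and your splitting of the complex $f_m$ into real and imaginary parts, make explicit two points the paper passes over; the only slip is cosmetic, since $(x-y)/r$ ranges over a ball of radius comparable to $\sqrt n$ around $z_0$, so you should take $\delta=1$ rather than small, which still keeps that ball away from the origin.
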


The proof follows the lines of the proof of Lemma 5.1 in \cite{Holmes}, with slight modifications to adapt it to our nonlinear context. We also refer to \cite{Janson} and \cite{Chaffee}, where similar proofs are done for Calderon-Zygmund singular integrals in the scalar and bilinear case, respectively. 

\begin{proof}
    Fix a cube $Q$. Let us consider the cube $P$ with $\ell(P)=4\ell(Q)$, with the same bottom left corner. Let $P_R$ be the upper right half of $P$. We observe that $\ell(P_R)=2\ell(Q)$. Given $x\in Q$ and $y\in P_R$, we have the following trivial estimates, 
    \begin{align*}
        \frac{|x-y|}{2\sqrt{n} \ell(P)}\ge \frac{\sqrt{n}\ell(Q)}{2\sqrt{n}\ell(P)}=\frac{1}{8},
    \end{align*}
    and,
    \begin{align*}
        \frac{|x-y|}{2\sqrt{n} \ell(P)}\le  \frac{\sqrt{n}\ell(P)}{2\sqrt{n}\ell(P)}=\frac{1}{2}.
    \end{align*}
    There exist a function $K(x)$, that is smooth on $[-1,1]^n$, has a smoth periodic extension to $\R^n$, and is equal to $|x|^{n-\alpha}$ for $\frac{1}{8}\le |x|\le \frac{1}{2}$. Therefore, for $x\in Q$ and $y\in P_R$, we have
    \begin{equation*}
        \left( \frac{|x-y|}{2\sqrt{n}\ell(P)}\right)^{n-\alpha} = K \left(\frac{x-y}{2\sqrt{n}\ell(P)}\right).
    \end{equation*}
    We can express the function $K$ as an absolutely convergent Fourier series. 

    Now, we define $c_Q=\frac{1}{|P_R|}\int_{P_R} b(y)dy$ and $\sigma(x)=\sgn (b(x)-c_Q)$. Then, we have
    \begin{align*}
        \int_Q |b-c_Q| d\mathcal{H}_\infty^\beta = & \int_Q \left| b(x)- \frac{1}{|P_R|}\int_{P_R} b(y)dy\right| d\mathcal{H}_\infty^\beta(x)\\
        = & \frac{1}{\ell(P_R)^n}\int_Q \left| \int_{P_R} (b(x)-  b(y)) \sigma(x) dy\right| d\mathcal{H}_\infty^\beta(x)\\
        = & \frac{1}{\ell(P_R)^n}\int_Q \left| \int_{P_R} \frac{b(x)-  b(y)}{\left( \frac{|x-y|}{2\sqrt{n} \ell(P)} \right)^{n-\alpha} } \left( \frac{|x-y|}{2\sqrt{n} \ell(P)} \right)^{n-\alpha} \sigma(x) dy\right| d\mathcal{H}_\infty^\beta(x)\\
        = & C_{n,\alpha}  \frac{1}{\ell(P)^\alpha }\int_Q \left| \int_{P_R} \frac{b(x)-  b(y)}{ |x-y|^{n-\alpha} } K \left( \frac{x-y}{2\sqrt{n} \ell(P)} \right)\sigma(x) dy\right| d\mathcal{H}_\infty^\beta(x)\\
        = & I_1
    \end{align*}
    We expand $K$ as an absolutely convergent Fourier series, 
    \begin{equation*}
        K \left( \frac{x-y}{2\sqrt{n} \ell(P)} \right) = \sum_k a_k e^{ik \frac{x}{2\sqrt{n} \ell(P)}}e^{-ik \frac{y}{2\sqrt{n} \ell(P)}},
    \end{equation*}
    where we remark that $\sum_k |a_k|=C_{\alpha, n} <\infty$. Using subadditivity of the Choquet integral with respect to $\cal{H}_\infty^\beta$ we have
    \begin{align*}
        I_1 \le & C_{n,\alpha,\beta }  \frac{1}{\ell(P)^\alpha } \sum_k |a_k| \int_Q \left| \int_{P_R} \frac{b(x)-  b(y)}{ |x-y|^{n-\alpha} }  e^{-ik \frac{y}{2\sqrt{n} \ell(P)}} e^{ik \frac{x}{2\sqrt{n} \ell(P)}} \sigma(x) dy\right| d\mathcal{H}_\infty^\beta(x)\\
        = & C_{n,\alpha,\beta }  \frac{1}{\ell(P)^\alpha } \sum_k |a_k| \int_Q \left| \int_{\R^n} \frac{b(x)-  b(y)}{ |x-y|^{n-\alpha} }  e^{-ik \frac{y}{2\sqrt{n} \ell(P)}} \chi_{P_R}(y)   dy\,  e^{ik \frac{x}{2\sqrt{n} \ell(P)}} \sigma(x)\right| d\mathcal{H}_\infty^\beta(x)\\
        = & C_{n,\alpha,\beta }  \frac{1}{\ell(P)^\alpha } \sum_k |a_k| \int_Q \left| [b, I_\alpha] f_k(x)  e^{ik \frac{x}{2\sqrt{n} \ell(P)}} \sigma(x)\right| d\mathcal{H}_\infty^\beta(x)\\
        = & C_{n,\alpha,\beta }  \frac{1}{\ell(P)^\alpha } \sum_k |a_k| \int_Q \left| [b, I_\alpha] f_k(x) \right| d\mathcal{H}_\infty^\beta(x)\\
        = &I_2
    \end{align*}
    where $f_k(y)=e^{-ik \frac{y}{2\sqrt{n} \ell(P)}} \chi_{P_R}(y)$. We can estimate each integral using Hölder's inequality and \eqref{LpLq bound commutator 2},
    \begin{align*}
        \int_Q \left| [b, I_\alpha] f_k(x) \right| d\mathcal{H}_\infty^\beta(x) = & \int_{\R^n} \left| [b, I_\alpha] f_k(x) \right| \chi_Q(x) d\mathcal{H}_\infty^\beta(x)\\
        \le & 2\left( \int_Q \left| [b, I_\alpha] f_k(x) \right|^q d\mathcal{H}_\infty^\beta(x) \right) ^\frac{1}{q} \ell(Q)^\frac{\beta}{q'}\\
        \le & 2 \| [b,I_\alpha]\|_{(p,q)} \left( \int_Q \left| f_k(x) \right|^p d\mathcal{H}_\infty^\beta(x) \right) ^\frac{1}{p} \ell(Q)^\frac{\beta}{q'}\\
        = & 2 \| [b,I_\alpha]\|_{(p,q)} \ell(P_R)^\frac{\beta}{p} \ell(Q)^\frac{\beta}{q'}\\
        = & 2^{1+\frac{\beta}{p}} \| [b,I_\alpha]\|_{(p,q)} \ell(Q)^{\frac{\beta}{p} +\frac{\beta}{q'}}\\
        = & 2^{1+\frac{\beta}{p}} \| [b,I_\alpha]\|_{(p,q)} \ell(Q)^{\alpha +\beta },
    \end{align*}
    where we have used the relation $\frac{1}{q'}+\frac{1}{p}= \frac{\alpha}{\beta}+1$. Then, we have
    \begin{align*}
        I_2 \le & C_{n,\alpha,\beta }  \frac{1}{\ell(P)^\alpha } \sum_k |a_k|  2^{1+\frac{\beta}{p}} \| [b,I_\alpha]\|_{(p,q)} \ell(Q)^{\alpha +\beta } \\ 
        \le & C_{n,\alpha,\beta }  \frac{1}{\ell(Q)^\alpha } \| [b,I_\alpha]\|_{(p,q)} \ell(Q)^{\alpha +\beta } \sum_k |a_k| \\
        = & C_{n,\alpha,\beta }   \| [b,I_\alpha]\|_{(p,q)} \ell(Q)^{\beta } .
    \end{align*}
    Therefore, we have proved that for each cube $Q$ there exist $c_Q\in\R$ such that 
    \begin{equation*}
        \frac{1}{\ell(Q)^\beta}  \int_Q |b-c_Q| d\mathcal{H}_\infty^\beta  \le C_{n,\alpha,\beta }   \| [b,I_\alpha]\|_{(p,q)},
    \end{equation*}
    for all cube $Q$, and this implies $\|b\|_{\operatorname{BMO}^\beta(\R^n)} \le C_{n,\alpha,\beta }   \| [b,I_\alpha]\|_{(p,q)}$.
\end{proof}

This completes the proof of Theorem \ref{thm: characterization BMO commutator}, upon combining Theorems \ref{Thm LpLq bound commutator} and \ref{Thm b BMO}.

\section{Endpoint modular weak-type inequality}

In this section, we establish the endpoint modular weak-type inequality stated in Theorem \ref{thm: modular weak}. Using the pointwise estimate \eqref{key pointwise estimate 2} and the capacitary Fefferman-Stein inequality stated in Theorem \ref{thm: beta FS}, we combine these ingredients with the boundedness properties of the Riesz potential and the fractional maximal operator to obtain the desired endpoint control. This result extends the modular estimate of Cruz-Uribe and Fiorenza to the capacitary setting.

\begin{proof}[Proof of Theorem \ref{thm: modular weak}]
    The proof follows the lines of the proof of Theorem 1.2 in \cite{Cruz-Uribe-Fiorenza}. We refer to that paper for more details. 

    We may assume that $\|b\|_{\operatorname{BMO}^\beta(\R^n)}\neq 0$ and that $f$ is non-negative. By homogeneity it is enough to prove \eqref{eq: modular weak} when $t=1$. We have
    \begin{align*}
        \mathcal{H}_\infty^\beta (\{x :  |[b, I_\alpha] f(x)|>1\}) = & \Psi(B(1)) \frac{1}{\Psi(B(1))} \mathcal{H}_\infty^\beta (\{x :  |[b, I_\alpha] f(x)|>1\})\\
        \le & \Psi(B(1)) \sup_{t>0} \frac{1}{\Psi(B(1/t))} \mathcal{H}_\infty^\beta (\{x :  |[b, I_\alpha] f(x)|>t\})\\
        \le & \Psi(B(1)) \sup_{t>0} \frac{1}{\Psi(B(1/t))} \mathcal{H}_\infty^\beta (\{x :  \mathcal{M}_{\mathcal{H}_\infty^{\beta, Q_0}}([b, I_\alpha] f)(x)>t\}).
    \end{align*}
    Since $\varphi(t)=\frac{1}{\Psi(B(1/t))}$ is doubling \cite[p. 123]{Cruz-Uribe-Fiorenza}, we can apply the modular Fefferman-Stein inequality \eqref{weak orlicz FS}. Thus, by \eqref{weak orlicz FS} and Theorem \ref{thm: key pointwise}, we have
    \begin{align*}
        \mathcal{H}_\infty^\beta (\{x :  |[b, I_\alpha] f(x)|>1\}) \le & C \sup_{t>0} \frac{1}{\Psi(B(1/t))} \mathcal{H}_\infty^\beta (\{x :  \mathcal{M}_{\beta}^\# ([b, I_\alpha] f)(x)>t\})\\
        \le & C \sup_{t>0} \frac{1}{\Psi(B(1/t))} \mathcal{H}_\infty^\beta \left( \left \lbrace x :  I_\alpha f(x)  > \frac{t}{C \|b\|_{\operatorname{BMO}^\beta }} \right\rbrace \right)\\
        & + C \sup_{t>0} \frac{1}{\Psi(B(1/t))} \mathcal{H}_\infty^\beta \left( \left \lbrace x :  \mathcal{M}_{\alpha, B, \mathcal{H}_\infty^\beta} f(x)  > \frac{t}{C \|b\|_{\operatorname{BMO}^\beta }} \right\rbrace \right)\\
        = & I_1 + I_2.
    \end{align*}

    We need to use the weak-type estimate for the Riesz potential in the capacitary context to estimate $I$. More precisely, we will use Theorem \ref{HHestimate} in the case $p=1$, 
    \begin{equation*}
        \left( \int_{\R^n} |I_\alpha f(x)|^{\frac{\beta}{\beta -\alpha }}  d\mathcal{H}_\infty^\beta \right)^\frac{\beta-\alpha}{\beta } \le C \int_{\R^n} |f(x)| d\mathcal{H}_\infty^\beta.
    \end{equation*}
    Note that for $0<\beta<n$ the previous inequality is not the endpoint, $p=\frac{\beta}{n}$. However, for each $0<\beta\le n$ we have the following weak-type inequality,
    \begin{equation*}
        \mathcal{H}_\infty^\beta (\{ x\in \R^n : |I_\alpha f(x)|>t\}) \le C \left( \frac{1}{t} \left(\int_{\R^n} |f(x)| d\mathcal{H}_\infty^\beta \right)\right)^{\frac{\beta}{\beta-\alpha}},
    \end{equation*}
    for every $t>0$. Using the previous estimate, we have
    \begin{align*}
        I_1 \le & C \sup_{t>0} \frac{1}{\Psi(B(1/t))} \left( \frac{ \|b\|_{\operatorname{BMO}^\beta }}{t} \int_{\R^n} |f(x)| d\mathcal{H}_\infty^\beta \right)^{\frac{\beta}{\beta-\alpha}}\\
        = & C\sup_{t>0} \frac{1}{\Psi(B(1/t))} \frac{1}{t^{\beta/(\beta-\alpha)}}   \left(\int_{\R^n} \left( \|b\|_{\operatorname{BMO}^\beta } |f(x)| \right) d\mathcal{H}_\infty^\beta \right)^{\frac{\beta}{\beta-\alpha}}\\
        \le & C \sup_{t>0} \frac{1}{\Psi(B(1/t))} \frac{1}{t^{\beta/(\beta-\alpha)}}   \left(\int_{\R^n} B \left( \|b\|_{\operatorname{BMO}^\beta } |f(x)| \right)   d\mathcal{H}_\infty^\beta \right)^{ \frac{\beta}{\beta-\alpha}}\\
        \le & C  \Psi \left( \int_{\R^n} B \left( \|b\|_{\operatorname{BMO}^\beta } |f(x)| \right) d\mathcal{H}_\infty^\beta\right) 
    \end{align*}
    since $t \le B(t)$ and $t^{\beta /(\beta-\alpha)}\le \Psi(t)$ for all $t>0$, and in the last inequality we have used 
    \begin{equation*}
        \sup_{t>0} \frac{1}{\Psi(B(1/t))} \frac{1}{t^{\beta/(\beta-\alpha)}} \le C,
    \end{equation*}
    we refer to the reader to \cite[p. 124]{Cruz-Uribe-Fiorenza} for more details.
    
    To estimate $I_2$, we use the weak-type estimate for the capacitary Orlicz maximal function $\mathcal{M}_{\alpha, B, \mathcal{H}_\infty^\beta}$ established in Theorem \ref{thm: weak orlicz fractional}, obtaining
    \begin{align*}
        I_2 \le & C \sup_{t>0} \frac{1}{\Psi(B(1/t))} \Psi \left( \int_{\R^n} B\left(\frac{\|b\|_{\operatorname{BMO}^\beta} |f(x)|}{t}  \right) d\mathcal{H}_\infty^\beta \right)\\
        \le & C \sup_{t>0} \frac{1}{\Psi(B(1/t))} \Psi(B(1/t)) \Psi \left( \int_{\R^n} B\left(\|b\|_{\operatorname{BMO}^\beta} |f(x)|  \right) d\mathcal{H}_\infty^\beta \right)\\
        \le & C \Psi \left( \int_{\R^n} B\left(\|b\|_{\operatorname{BMO}^\beta} |f(x)|  \right) d\mathcal{H}_\infty^\beta \right),
    \end{align*}
    where we have used that $B$ and $\Psi$ are submultiplicative functions. This concludes the proof. 
\end{proof}

\section*{Acknowledgment}
The authors are grateful to David Cruz-Uribe for suggesting this problem to the first author during his visit to National Taiwan Normal University (Taipei, Taiwan), and for his valuable comments and suggestions on an earlier draft of this manuscript.

\section*{Conflicts of Interest}
The authors have no conflicts of interest to declare.

\section*{Data Availability Statement}
Data sharing is not applicable to this article, as no datasets were generated or analyzed during the current study.

\end{document}